\newtheorem{Theorem}{Theorem}[section]
\newtheorem{Definition}{Definition}
\newtheorem{Proposition}[Theorem]{Proposition}
\newtheorem{Lemma}[Theorem]{Lemma}
\newtheorem{Remark}[Theorem]{Remark}
\newtheorem{Hypothesis}{Hypothesis}
\def\a{\alpha}
\def\le{\left}
\def\la{\lambda}
\def\ds{\displaystyle}
\def\e{\epsilon}
\title{\bf Nonlinear random perturbations of PDEs and quasi-linear equations in Hilbert spaces depending on a small parameter}\date{}
\author[S. Cerrai]{Sandra Cerrai}
\address{Department of Mathematics\\
University of Maryland\\ 
4176 Campus Drive, College Park, MD 20742, United States}
\email{cerrai@umd.edu}
\author[G. Guatteri]{Giuseppina Guatteri}
\address{Dipartimento di Matematica F. Brioschi\\
Politecnico di Milano\\
via Bonardi 9, 20133 Milano, Italy}
\email{giuseppina.guatteri@polimi.it}
\author[G. Tessitore]{Gianmario Tessitore}
\address{
Dipartimento di Matematica e Applicazioni\\
Universit\`a di Milano Bicocca\\
Via Roberto Cozzi 55, 20125 Milano, Italy}
\email{gianmario.tessitore@unimib.it}
\subjclass[2010]{}
\keywords{}
\begin{document}

 \begin{abstract}  We study a class of quasi-linear parabolic equations defined on a separable Hilbert space, depending on a small parameter in front of the second order term. Through the nonlinear semigroup associated with such equation, we introduce the corresponding SPDE and we study the asymptotic behavior of its solutions, depending on the small parameter. We show that a large deviations principle holds and we give an explicit description of the action functional.

 \end{abstract}

 \maketitle

\section{Introduction}
\label{sec1}

Consider the partial differential equation
\begin{equation}
	\label{intro1}
	\frac{dX^x}{dt}(t)=AX^x(t)+b(X^x(t)),\ \ \ \ \ X^x(0)=x \in\,H,
\end{equation}
defined on a separable Hilbert space $H$, endowed with the scalar product $\langle\cdot,\cdot\rangle_H$ and the corresponding norm $\Vert\cdot\Vert_H$. Here $A:D(A)\subset H\to H$ is the generator of a strongly continuous semigroup and $b:D(b)\subseteq H\to H$ is some non-linear mapping.
Next, consider the following stochastic perturbation of \eqref{intro1}
\begin{equation}
\label{intro2}
dX^x_\e(t)=\left[AX^x_\e(t)+b(X^x_\e(t))\right]\,dt+\sqrt{\e}\,\sigma(X^x_\e(t))\,dW_t,\ \ \ \ \ X^x_\e(0)=x \in\,H,	
\end{equation}
where $\e>0$ is a small parameter, $W_t$, $t\geq 0$, is a cylindrical Wiener process and $\sigma$ is a mapping, defined on $H$ and taking values in some space of bounded linear operators defined on the reproducing kernel of the noise into  $H$. We assume that the differential operator $A$, the coefficients $b$ and $\sigma$ and the noise $W_t$ are such that both \eqref{intro1} and \eqref{intro2} are well-posed.

If the parameter $\e$ is small,  the trajectories of the perturbed system \eqref{intro2} remain close to those of the unperturbed system \eqref{intro1} on any bounded time interval. In particular, if there exist a domain $G\subset H$ and a point $x_0 \in\,G$ such that any trajectory of \eqref{intro1} starting in  $G$ remains in $G$ and converges to $x_0$, as time goes to infinity,  then with overwhelming probability the trajectories of \eqref{intro2} starting from any $x \in G$ enter any neighborhood of $x_0$, before eventually leaving the domain $G$ because of the effect of the noise. As know, this is a consequence of the large deviations of $X_\e(t)$ from $X(t)$
which are described by the {\em action functional}
\[I^x_T(f)=\frac 12 \inf\left\{\int_0^T \Vert \varphi(t)\Vert_H^2\,dt\,:\,f=X^{x,\varphi}\right\},\]
where we have denoted by $X^{x,\varphi}$   the solution of the controlled version of \eqref{intro1}
\[\frac{dX^{x,\varphi}}{dt}(t)=AX^{x,\varphi}(t)+b(X^{x,\varphi}(t))+\sigma(X^{x,\varphi}(t))\varphi(t),\ \ \ \ \ X^{x,\varphi}(0)=x,\]
and by the {\em quasi-potential}
\[V(x_0,x)=\inf\left\{I_T(f)\,:\,f \in\,C([0,T];H),\ f(0)=x_0,\ f(T)=x,\ T>0\right\}.\]

It is known that the stochastic PDE \eqref{intro2} is related to the linear  Kolmogorov equation on the Hilbert space $H$ 
\begin{equation}
\label{intro5}
\left\{\begin{array}{l}
\ds{D_tu_\e(t,x)=\frac \e2 \text{Tr} \left[\sigma\sigma^\star(x)D^2_x	 u_\epsilon(t,x)\right]+\langle Ax+b(x),D u_\epsilon(t,x)\rangle_H,\ \ \ x \in\,H,\ \ \ t>0,}\\[10pt]
\ds{u_\epsilon(0,x)=g(x),\ \ \ x \in\,H.}
	\end{array}\right.
\end{equation}
Actually, under suitable conditions on the operator $A$, the coefficients $b$ and $\sigma$ and the initial condition $g$,  
equation \eqref{intro5} admits a unique classical solution $u_\e$, which can be written in terms of the linear transition semigroup $P^\e_t$ associated with \eqref{intro2}. Namely
\[u_\e(t,x)=P^\e_tg(x)=\mathbb{E}g(X_\e(t,x)),\ \ \ \ t\geq 0,\ \ x \in\,H.\]
In particular, the description of the small noise asymptotics of the solutions of equation \eqref{intro2} provided by the theory of large deviations  allows to give a detailed description of the long-time behavior of the solutions of infinite dimensional PDE \eqref{intro5}.

\medskip

In \cite{FK}, Freidlin and Koralov have considered more general stochastic  perturbations of the dynamical system \eqref{intro1}, when $H=\mathbb{R}^d$, $A=0$ and  $b:\mathbb{R}^d\to \mathbb{R}^d$ is a Lipschitz-continuous mapping. They have introduced the following quasi-linear parabolic problem  
\begin{equation}
\label{intro6}
\left\{\begin{array}{l}
\ds{\partial_tu_\e(t,x)=\frac \e2 \sum_{i, j=1}^d a_{i,j}(x,u_\e(t,x))\,\partial_{ij}	 u_\epsilon(t,x) +\sum_{i=1}^d b_i(x)\,\partial_iu_\epsilon(t,x),\ \ \ x \in\,\mathbb{R}^d,\ \ \ t>0,}\\[10pt]
\ds{u_\epsilon(0,x)=g(x),\ \ \ x \in\,\mathbb{R}^d,}
	\end{array}\right.
\end{equation}
where $a_{ij}(x,r)=(\sigma\sigma^\star)_{ij}(x,r)$, and by invoking the classical theory of quasi-linear PDEs,  they have shown that, under reasonable assumptions on the coefficients $f$ and $\sigma$,  equation \eqref{intro6} admits a unique classical solution $u_\e$. Next, for every $t>0$ and $x \in\,\mathbb{R}^d$, they have  introduced the following randomly perturbed system
\begin{equation}
\label{intro7}
\left\{\begin{array}{l}
\ds{dX_\e^{t,x}(s)=b(X_\e^{t,x}(s))\,ds+\sqrt{\e}\,\sigma(X_\e^{t,x}(s),u_\e(t-s,X_\e^{t,x}(s)))\,dB_s,}\\[10pt]
\ds{X_\e^{t,x}(0)=x,}
	\end{array}\right.
\end{equation}
where $B_t$, $t\geq 0$, is a $d$-dimensional Brownian motion. As in the linear case, the PDE \eqref{intro6} and the SDE \eqref{intro7}  are related by the following relation
\begin{equation}
\label{maximum}
u_\e(t,x)=\mathbb{E}g(X^{t,x}_\e(t))=:T^\e_tg(x),	
\end{equation}
but now  $T^\e_t$ is a non-linear semigroup. This is in fact  reason why equation \eqref{intro7} can be seen as a {\em non-linear} perturbation of the deterministic system.

The study of  the large deviation principle and of  the quasi-potential for \eqref{intro7}, has allowed Freidlin and Koralov  to study the long-time behavior of the solutions to equation \eqref{intro6}, restricted to the domain $G$ (that now is a bounded domain in $\mathbb{R}^d$) and endowed with the boundary condition $u_\e(t,x)=g(x)$, for every $x \in\,\partial G$. In this case
\[u_\e(t,x)=\mathbb{E}g(X^{t,x}_\e(t\wedge \tau^x_\e)),\]
where $\tau^x_\e$ is the first exit time of $X^{t,x}_\e$ from the domain $G$. In particular, the asymptotic description of $\tau^x_\e$ in terms of the quasi-potential has made possible to study  the asymptotic behavior of $u_\e$ on  exponential time scales $t(\e)\sim \exp(\la/\e)$. Freidlin and Koralov's idea is to introduce a family of linear equations obtained from \eqref{intro6} by freezing the second variable in $\sigma\sigma^\star$ and putting it equal to a constant $c$. This allows them to describe the asymptotics of $u_\e(\exp(\la/\e),x)$, for different values of $\lambda \in\,(0,\infty)$, in terms of some function $c(\la)$ obtained from   $V_G(c)$, the minimum of the quasi-potential in $G$ for the linear problem corresponding to $c$, and from $g(x^\star(c))$, where $x^\star(c)$ is the point of $\partial G$ where the quasi-potential attains its minimum, for different values of $c$. 

\medskip

The present paper represents the beginning of a longer term project where we aim  to develop an analogous theory for  infinite dimensional dynamical systems described by PDEs. As in the finite dimensional case studied in \cite{FK}, also here, as a first and fundamental step, we need to be able to study the well-posedness of the following quasi-linear equations
\begin{equation}
\label{quasi-linear}
\left\{\begin{array}{l}
\ds{D_tu_\e(t,x)=\frac \e2 \text{Tr} \left[\sigma\sigma^\star(x,u_\epsilon(t,x))D^2_x	 u_\epsilon(t,x)\right]+\langle Ax+b(x),D u_\epsilon(t,x)\rangle_H,\ \ \ x \in\,H,\ \ \ t>0,}\\[10pt]
\ds{u_\epsilon(0,x)=g(x),\ \ \ x \in\,H,}
	\end{array}\right.
\end{equation}
 However, unlike in finite dimension, where a well-established theory of deterministic quasi-linear PDEs is available, it seems that the current literature does not provide any Hilbert space counterpart to such classical theory, and everything has to be done. 

In our analysis we will proceed in several steps and here  we are considering the case when $\sigma:H\times \mathbb{R}\to\mathcal{L}(H)$ is Lipschitz continuous and there exist a bounded and non-negative symmetric  operator $Q$, a  continuous mapping $f$ defined on $H\times \mathbb{R}$ with values in the space of trace-class operators and a constant $\delta>0$ such that
\[\sigma^\star\sigma(x,r)=Q+\delta\,f(x,r),\ \ \ \ x \in\,H,\ \ r \in\,\mathbb{R}.
\]
This allows to  rewrite equation \eqref{quasi-linear} as
\[\left\{\begin{array}{l}
\ds{D_tu_\e(t,x)=\mathcal{L}_\e u_\e(t,x)+\frac {\e} 2 \text{Tr} \left[\delta\, f(x,u_\e)(t,x)D^2_x	 u_\e(t,x)\right]+\langle b(x),D u_\e(t,x)\rangle_H,}\\[10pt]
\ds{u_\e(0,x)=g(x),\ \ \ x \in\,H,}
	\end{array}\right.\]
where
\[\mathcal{L}_\e\varphi(x)=\frac \e2 \text{Tr}\left[QD_x^2\varphi(x)\right]+\langle Ax, D_x \varphi(x)\rangle_H.\]
In particular, if we denote by $R^\e_t$ the Ornstein-Uhlenbeck semigroup associated with the operator $\mathcal{L}_\e$, we can rewrite equation \eqref{quasi-linear} in mild form as
\begin{equation}
\label{intro8}
u_\e(t,x)=R^\e_tg(x)+\int_0^t R^\e_{t-s}\left(\frac \e 2\text{\mbox{Tr}}\left[\delta F(u_\e(s,\cdot))D^2_x	 u_\e(s,\cdot)\right]+\langle b(\cdot),D u_\e(s,\cdot)\rangle_H\right)(x)\,ds.
	\end{equation}

By assuming suitable conditions on $A$ and $Q$ (see Hypothesis \ref{H2}), the semigroup $R^\e_t$ has a smoothing effect for every fixed $\e>0$, and this allows to prove by a fixed point argument that if $\delta$ is sufficiently small there exists a local solution in a suitable space of smooth H\"older continuous functions. Moreover, we show that any mild solution $u_\e$ defined on a given interval $[0,T]$ is in fact a classical solution. In particular, $u_\e(t,\cdot)$ is twice continuously differentiable in $H$ for every $t>0$ and $QD^2_x u_\e(t,x)$ is a trace-class operator, $u_\e(\cdot,x)$ is differentiable in $(0,+\infty)$ for every $x \in\,D(A)$ and equation \eqref{intro6} holds. Finally, in order to prove that for every $T>0$ and $\e>0$ the solution is defined on the whole interval $[0,T]$ and is unique, we prove suitable a-priori bounds. 

Once we have proven the existence and uniqueness of a classical solution for equation \eqref{intro6}, we can introduce the  stochastic PDE
 \begin{equation}
\label{stoch-pde.intro}
\left\{\begin{array}{l}
\ds{dX_\e^{t,x}(s)=\left[A	X_\e^{t,x}(s)+b(X_\e^{t,x}(s))\right]\,ds+\sqrt{\e}\,\sigma(X_\e^{t,x}(s),u_\e(t-s,X_\e^{t,x}(s)))\,dW_s,}\\[10pt]
\ds{X_\e^{t,x}(0)=x,}
	\end{array}\right.
\end{equation}
where $W_t$ is a cylindrical Wiener process in $H$, defined on some stochastic basis $(\Omega, \mathcal{F}, \{\mathcal{F}_t\}_{t\geq 0}, \mathbb{P})$.
Due to the regularity of the coefficients and of  the function $u_\e$, we can show that  there exists $\bar{\delta}>0$ such that, for every $\delta\leq \bar{\delta}$ and for every  $t>0$ and $x \in\,H$, equation \eqref{stoch-pde.intro} admits a unique mild solution in $L^2(\Omega;C([0,T]; H))$.
Moreover, we show that, as in the finite dimensional case, the quasi-linear equation \eqref{intro6} and the stochastic PDE are related through formula \eqref{maximum} and, in particular,  a maximum principle holds for equation \eqref{quasi-linear}.


It is worth noticing that as a consequence of  the Markov property, the following relation holds 
$$u_\e(t-s,X_\e^{t,x}(s)))=\mathbb{E}(g(X_\e^{t-s,y}(t-s))\big \vert_{y=X_\e^{t,x}(s)}=\mathbb{E}(g(X_\e^{t,x}(t))\vert \mathcal{F}_s),$$
for every $s \in\,[0,t]$ and $x \in\,H$,
so that equation \eqref{intro7} reads as
\begin{equation}
\label{stoch-pde.intro1}
\left\{\begin{array}{l}
\ds{dX_\e^{t,x}(s)=\left[A	X_\e^{t,x}(s)+b(X_\e^{t,x}(s))\right]\,ds+\sqrt{\e}\,\sigma(X_\e^{t,x}(s),\mathbb{E}(g(X_\e^{t,x}(t))\vert \mathcal{F}_s))\,dW_s,}\\[10pt]
\ds{X_\e^{t,x}(0)=x.}
	\end{array}\right.
\end{equation}
Setting $Y^{t, x}(s):=\mathbb{E}(g(X_\e^{t,x}(t))\vert \mathcal{F}_s))$, the  equation above can be further rewritten as a coupled forward backward infinite dimensional stochastic system
\begin{equation}
\label{stoch-fbsde.intro}
\left\{\begin{array}{l}
\ds{dX_\e^{t,x}(s)=\left[A	X_\e^{t,x}(s)+b(X_\e^{t,x}(s))\right]\,ds+\sqrt{\e}\,\sigma(X_\e^{t,x}(s),Y^{t,x}(s)))\,dW_s,\qquad 0\leq s \leq t}\\[10pt]
\ds{-d_s Y^{t,x}(s)= - Z^{t,x}(s) dW_s,\qquad 0\leq s \leq t}\\[10pt]
\ds{Y^{t,x}(t)=g(X_\e^{t,x}(t))}
\\[10pt]
\ds{X_\e^{t,x}(0)=x.}
	\end{array}\right.
\end{equation}
Coupled forward-backward systems of stochastic equations of the general form
\begin{equation}
\label{stoch-fbsde-fd.intro}
\left\{\begin{array}{l}
\ds{dX(s)=b(X(s),Y(s),Z(s))ds+\sigma(X(s),Y(s))\,dW_s,\qquad 0\leq s \leq t}\\[10pt]
\ds{-d_s Y(s)= \psi((X(s),Y(s),Z(s))\,ds - Z(s) dW_s,\qquad 0\leq s \leq t}\\[10pt]
\ds{Y(t)=g(X(t))}
\\[10pt]
\ds{X_\e(0)=x,}
	\end{array}\right.
\end{equation}
have been extensively studied in the finite dimensional case, see \cite{MaYong} where several results are collected. Since \cite{antonelli}, it has been clear that arbitrary forward-backward stochastic systems do not always admit a solution. Different techniques have been developed to prove existence and uniqueness both locally in time and in arbitrarily long time intervals. In particular the classical theory of PDEs, applied to the corresponding nonlinear Kolmogorov equations, offers a wide range of results stating well posedness of system  \eqref{stoch-fbsde-fd.intro} (see, for instance  \cite{MaYong} \cite{gua-del} or \cite{gua-lun}) that include existence and uniqueness of a global solution to the finite dimensional analogue of system \eqref{stoch-fbsde.intro} when $\sigma$ is not degenerate.  In the infinite dimensional case, in which large part of the analytic techniques are not available any more, very few results on existence and uniqueness of a solution to system \eqref{stoch-fbsde.intro} in arbitrary time interval are at hand (for local existence and uniqueness  see \cite{gua}). 
It seems that the techniques more likely to be extended in infinite dimensions are the ones introduced in  \cite{PardouxTang} where  quantitative conditions on dissipativity of $b$ and bounds on the Lipschitz norm of $\sigma$ and $g$ are required.  Such restrictions go in the same direction as the condition on $\delta$ that we have to impose here, see above.

We finally notice that, if we show that system \eqref{stoch-fbsde.intro} is well posed, then we can define a candidate solution to the PDE \eqref{quasi-linear} by setting
 $$\hat{u}_\e(t-s, \xi)=\mathbb{E}(Y^{t,x}(s )| X^{t,x}(s)=\xi)$$ but, unless we have a satisfactory analytic theory for equation \eqref{stoch-fbsde.intro}, the proof that $\hat{u}_\epsilon$ is the unique solution of \eqref{stoch-fbsde.intro} (in which formulation?) is still to be done and does not seem obvious at all.
Once such relation would be understood, it could also be possible to study the large deviations principle, see below,  for more general nonlinear perturbations of \eqref{intro1} defined through systems like \eqref{stoch-fbsde-fd.intro} (see \cite{cgz} for a similar approach in the finite dimensional case  where the connection between $\hat{u}_\epsilon$ and equation \eqref{stoch-fbsde.intro} is a straight-forward consequence of existence and uniqueness of a regular solution to \eqref{stoch-fbsde.intro} and Ito rule).

 \medskip

As we mentioned at the beginning of this introduction, we are interested in applying our results to the study of the asymptotic behavior of \eqref{stoch-pde.intro} and \eqref{quasi-linear}, as $\e\downarrow 0$. This is a multi-step project and here we are addressing the problem of the  validity of a large deviation principle for the trajectories of the solutions of equation \eqref{stoch-pde.intro}. Thus, in the last section of our paper we prove that the family  $\{X_\e^{t,x}\}_{\e \in\,(0,1)}$ satisfies a large deviation principle in the space $C([0,t];H)$, which is governed  by the action functional 
\[I_{t,x}(X)=\frac 12\,\inf\left\{\int_0^t\Vert \varphi(s)\Vert_H^2\,ds\,:\, X(s)=	X^{t,x}_\varphi(s),\ s \in\,[0,t]\right\},
\]
where $X^{t,x}_\varphi$ is the  unique mild solution of  problem
\[X^\prime(s)=A	X(s)+b(X(s))+\sigma(X(s),g(Z^{X(s)}(t-s)))\varphi(s),\ \ \ \ X(0)=x,\]
and for every $y \in\,H$ 
\[Z^y(s)=e^{sA}y+\int_0^s e^{(s-r)A}b(Z^y(r))\,dr.\]
\textcolor{red}{}

\section{Notations and preliminaries}
\label{sec2}

Throughout this paper, $H$ is a separable Hilbert space, endowed with the scalar product $\langle\cdot,\cdot\rangle_H$ and the corresponding norm $\Vert\cdot\Vert_H$. In what follows we shall introduce some notations and preliminary results (we refer to \cite{B}, \cite{DPZ2} and \cite{Kuo} for all  details).

\subsection{Operator spaces} 

We denote by $\mathcal{L}(H)$ the Banach space of  all bounded  linear operators $A:H\to H$, endowed with the sup-norm
\[\Vert A\Vert_{\mathcal{L}(H)}=\sup_{\Vert x\Vert_H\leq 1}\Vert Ax\Vert_H.\] An operator $A \in\,\mathcal{L}(H)$ is symmetric if it coincides with its adjoint $A^\star$, that is if $\langle Ax,y\rangle_H=\langle x,Ay\rangle_H$, for all $x, y \in\,H$. Moreover, it is non-negative if $\langle Ax,x\rangle_H\geq 0$, for all $x \in\,H$.
We shall denote by $\mathcal{L}^+(H)$  the subspace of all non-negative and symmetric operators in $\mathcal{L}(H)$.

An operator $A \in\,\mathcal{L}(H)$ is called an {\em Hilbert-Schmidt} operator if there exists an orthonormal basis $\{e_i\}_{i \in\,\mathbb{N}}$ of $H$ such that
\[\sum_{i=1}^\infty \Vert A e_i\Vert_H^2<\infty.\]
The subspace of Hilbert-Schmidt operators, denoted by $\mathcal{L}_2(H)$, is a Hilbert space, endowed with the scalar product 
\[\langle A,B\rangle_{\mathcal{L}_2(H)}=\sum_{i=1}^\infty \langle A e_i, B e_i\rangle_H.\]

As know, for every  $B \in\,\mathcal{L}^+(H)$ there exists a unique $C \in\,\mathcal{L}(H)$, denoted by $\sqrt{B}$ such that $C^2=B$. Thus, for any $A \in\,\mathcal{L}(H)$ we can define
\[|A|:=\sqrt{A^\star A}.\] We  recall that an operator $A \in\,\mathcal{L}(H)$ is compact if and only if $|A|$ is compact. Moreover, if $A$ is a symmetric compact operator, then there exists an orthonormal basis $\{e_i\}_{i \in\,\mathbb{N}}$ of $H$ and a sequence $\{\a_i\}_{i \in\,\mathbb{N}}$ converging to zero such that $A e_i=\a_i e_i$, for all $i \in\,\mathbb{N}$. With these notations, we say that a compact operator $A \in\,\mathcal{L}(H)$ is {\em nuclear} or {\em trace-class} if there exists an orthonormal basis of $H$ consisting of eigenvectors of $|A|$ corresponding to the eigenvalues $\{\a_i\}_{i \in\,\mathbb{N}}$, such that
\[\sum_{i=1}^\infty \a_i<\infty.\]
In particular, if the operator $A$ is symmetric, it is nuclear if and only if there exists an orthonormal basis of $H$ consisting of eigenvectors of $A$ corresponding to the eigenvalues $\{\a_i\}_{i \in\,\mathbb{N}}$, such that
\[\sum_{i=1}^\infty |\a_i|<\infty.\]
We denote by $\mathcal{L}_1(H)$ the set of nuclear operators.

It is possible to prove that for every $A\in\,\mathcal{L}_1(H)$ the series
\[\text{Tr} A:=\sum_{i=1}^\infty \langle Ae_i,e_i\rangle_H\]
does not depend on the choice of the orthonormal basis $\{e_i\}_{i \in\,\mathbb{N}}$. Moreover, a symmetric operator $A$ belongs to $\mathcal{L}_1(H)$ if and only if the series above converges absolutely for every orthonormal basis $\{e_i\}_{i \in\,\mathbb{N}}$. The space $\mathcal{L}_1(H)$ is a Banach space, endowed with the norm
\[\Vert A\Vert_{\mathcal{L}_1(H)}=\text{Tr}\,|A|,\]
and
\begin{equation}
\label{9}
|\text{Tr} A|\leq \Vert A\Vert_{\mathcal{L}_1(H)}.	
\end{equation}

It is possible to prove that $\mathcal{L}_1(H)\subset \mathcal{L}_2(H)\subset \mathcal{L}(H)$ with
\[\Vert A\Vert_{\mathcal{L}(H)}\leq \Vert A\Vert_{\mathcal{L}_2(H)}\leq \Vert A\Vert_{\mathcal{L}_1(H)},\]
and for $j=1,2$ it holds
\[\Vert AB\Vert_{\mathcal{L}_j(H)}\leq \Vert A\Vert_{\mathcal{L}_j(H)}\Vert B\Vert_{\mathcal{L}(H)},\ \ \ \  \Vert AB\Vert_{\mathcal{L}_j(H)}\leq \Vert B\Vert_{\mathcal{L}_j(H)}\Vert A\Vert_{\mathcal{L}(H)}.\]
Moreover, if $A, B \in\,\mathcal{L}_2(H)$, then $AB \in\,\mathcal{L}_1(H)$, with 
\[\Vert AB\Vert_{\mathcal{L}_1(H)}\leq \Vert A\Vert_{\mathcal{L}_2(H)}\Vert B\Vert_{\mathcal{L}_2(H)}.\]

\subsection{Functional spaces}

If $E$ is an arbitrary Banach space, endowed with the norm $\Vert\cdot\Vert_E$, we denote by $B_b(H;E)$ the space of Borel and bounded functions $\varphi:H\to E$. $B_b(H;E)$ is a Banach space, endowed with the sup-norm
\[\Vert\varphi\Vert_0=\sup_{x \in\,H}\Vert\varphi(x)\Vert_E.\]
Moreover, we denote by 
$C_b(H;E)$ the closed subspace of 
uniformly continuous  and bounded functions.  

For every integer $n\geq 1$, we denote by $C^n_b(H;E)$ the space of all functions $\varphi \in\,C_b(H;E)$ which are $n$-times Fr\'echet differentiable, with uniformly continuous and bounded Fr\'echet derivatives $D^l\varphi:H\to\mathcal{L}^h(H;E)$\footnote{We denote by $\mathcal{L}^l(H;E)$  the space of $l$-linear bounded operators $A:H^l\to E$. When $l=1$, we identify $\mathcal{L}^1(H;\mathbb{R})$ with $H$ and when $l=2$ we identify $\mathcal{L}^2(H;\mathbb{R})$ with $\mathcal{L}(H)$.}, for all $l\leq n$. We have that  $C^k_b(H;E)$ is a Banach space, endowed with the norm
\[\Vert \varphi\Vert_n=\Vert \varphi\Vert_0+\sum_{l=1}^n \Vert D^l\varphi\Vert_0.\]
Next, for every $\vartheta \in\,(0,1)$ we denote by $C^\vartheta_b(H;E)$ the space of all functions $\varphi \in\,C_b(H;E)$ such that
\[\ds{[\varphi]_\vartheta:=\sup_{\substack{x, y \in\,H\\x\neq y}}\frac{\Vert\varphi(x)-\varphi(y)\Vert_E}{\Vert x-y\Vert_H^\vartheta}<\infty.}\]
$C^\vartheta_b(H;E)$ is  a Banach space, endowed with the norm
\[\Vert \varphi\Vert_{\vartheta}=\Vert \varphi\Vert_0+[\varphi]_\vartheta.\]
Finally, for every integer $n \in\,\mathbb{N}$ and $\vartheta \in\,(0,1)$, we denote by $C^{n+\vartheta}_b(H;E)$ the space of all functions $\varphi \in\,C^n_b(H;E)$ such that 
\[[D^n\varphi]_\vartheta:=\sup_{\substack{x, y \in\,H\\x\neq y}}\frac{\Vert D^n\varphi(x)-D^n\varphi(y)\Vert_{\mathcal{L}^n(H;E)}}{\Vert x-y\Vert_H^\vartheta}<\infty.\]
$C^{n+\vartheta}_b(H;E)$ is a Banach space, endowed with the norm
\[\Vert\varphi\Vert_{n+\vartheta}=\Vert \varphi\Vert_0+\sum_{l=1}^n \Vert D^l\varphi\Vert_0+[D^n\varphi]_\vartheta=\Vert \varphi\Vert_n+[D^n\varphi]_\vartheta.\]
Notice that in case $E=\mathbb{R}$, we simply write $B_b(H)$ instead of $B_b(H;E)$, and for every $\a\geq 0$ we write  $C^\a_b(H)$ instead of $C^\a_b(H;\mathbb{R})$.

Now, we want to see how classical interpolatory estimates for functions defined on $\mathbb{R}^n$ are still valid for functions defined on the infinite dimensional Hilbert space $H$. To this purpose, we recall that, as shown in \cite[Theorem 2.3.5]{DPZ2}, for every $0\leq \alpha<\beta<\gamma$ there exists a constant $c=c(\a,\beta,\gamma)>0$ such that for every $\varphi \in\,C^\gamma_b(H)$ 
\begin{equation}
\label{1}
\Vert \varphi\Vert_{\beta}\leq c\,\Vert \varphi\Vert_{\a}^{\frac{\gamma-\beta}{\gamma-\a}}\Vert \varphi\Vert_{\gamma}^{\frac{\beta-\a}{\gamma-\a}}.
\end{equation}
However, in what follows we will need the following additional interpolatory estimates.

\begin{Lemma}
Let us fix  $\vartheta \in\,(0,1)$. Then, for every $\varphi \in\,C^1_b(H)$ we have
\begin{equation}
\label{2}
[\varphi]_\vartheta \leq c_{1,\vartheta}\,\Vert\varphi\Vert_0^{1-\vartheta}\,\Vert D\varphi\Vert_0^\vartheta.
	\end{equation}
	Moreover, for every $\varphi \in\,C^{2+\vartheta}_b(H)$ we have
	\begin{equation}
	\label{3}
	\Vert D^2\varphi\Vert_0\leq c_{2,\vartheta}	\,\Vert D\varphi\Vert_0^{\frac{\vartheta}{1+\vartheta}}\,[D^2\varphi]_\vartheta^{\frac 1{1+\vartheta}},	\end{equation}
	and
\begin{equation}
\label{4}
\Vert D\varphi\Vert_0\leq c_{3,\vartheta}\,\Vert\varphi\Vert_0^{\frac{1+\vartheta}{2+\vartheta}}\,[D^2\varphi]_\vartheta^{	\frac{1}{2+\vartheta}}.
\end{equation}
\end{Lemma}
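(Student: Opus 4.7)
The plan is to reduce each of the three estimates to a sharp one-dimensional optimization, based on the observation that along any line $t\mapsto x+th$ the Fr\'echet derivatives $D^l\varphi$ reduce to ordinary derivatives of the scalar function $\psi(t)=\varphi(x+th)$, up to powers of $h$. The infinite-dimensional nature of $H$ plays no role beyond the mean value inequality on a segment.

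Estimate \eqref{2} follows directly from combining $|\varphi(x)-\varphi(y)|\leq 2\Vert\varphi\Vert_0$ with the mean value bound $|\varphi(x)-\varphi(y)|\leq \Vert D\varphi\Vert_0\Vert x-y\Vert_H$. Setting $r=\Vert x-y\Vert_H$ and dividing by $r^\vartheta$ yields
\[
\frac{|\varphi(x)-\varphi(y)|}{r^\vartheta}\leq \min\!\left(\frac{2\Vert\varphi\Vert_0}{r^\vartheta},\,\Vert D\varphi\Vert_0\, r^{1-\vartheta}\right),
\]
and balancing the two arguments at $r=2\Vert\varphi\Vert_0/\Vert D\varphi\Vert_0$ gives $[\varphi]_\vartheta\leq 2\,\Vert\varphi\Vert_0^{1-\vartheta}\Vert D\varphi\Vert_0^\vartheta$.

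For \eqref{3}, I would fix $x\in H$ and a unit vector $h\in H$ and consider $\psi(t):=\langle D\varphi(x+th),h\rangle_H$, so that $\psi'(t)=D^2\varphi(x+th)(h,h)$. The identity $\psi(t)-\psi(0)=t\psi'(0)+\int_0^t[\psi'(s)-\psi'(0)]\,ds$, together with $|\psi(t)-\psi(0)|\leq 2\Vert D\varphi\Vert_0$ and the H\"older bound $|\psi'(s)-\psi'(0)|\leq [D^2\varphi]_\vartheta\,s^\vartheta$, yields, for every $t>0$,
\[
|D^2\varphi(x)(h,h)|=|\psi'(0)|\leq \frac{2\Vert D\varphi\Vert_0}{t}+\frac{[D^2\varphi]_\vartheta}{1+\vartheta}\,t^\vartheta.
\]
Minimizing in $t$ produces exactly the exponents of \eqref{3}. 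Taking the supremum over $\Vert h\Vert_H=1$ and then over $x\in H$ yields the full estimate, once one invokes the polarization identity which gives $\Vert B\Vert_{\mathcal{L}(H)}=\sup_{\Vert h\Vert_H=1}|B(h,h)|$ for any symmetric bilinear form $B$ on $H$.

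For \eqref{4}, I would freeze again $x$ and a unit $h$ but now use $\psi(t):=\varphi(x+th)$, so that $\psi'(0)=\langle D\varphi(x),h\rangle_H$ and $\psi''(t)=D^2\varphi(x+th)(h,h)$. Writing the second-order Taylor formula with integral remainder at both $\pm t$ and subtracting cancels the $\psi''(0)$ contribution and produces
\[
2t\psi'(0)=\psi(t)-\psi(-t)-\int_0^t(t-s)\bigl[\psi''(s)-\psi''(-s)\bigr]\,ds.
\]
Bounding the first two summands by $2\Vert\varphi\Vert_0$ and the integrand by $[D^2\varphi]_\vartheta(2s)^\vartheta$ leads to $|\psi'(0)|\leq \Vert\varphi\Vert_0/t+C_\vartheta[D^2\varphi]_\vartheta\,t^{1+\vartheta}$, and minimizing in $t>0$ gives \eqref{4}. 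The main, and in fact only, obstacle is the choice of the correct Taylor-type representation in \eqref{4}: a naive one-sided second-order expansion would leave a term in $\Vert D^2\varphi\Vert_0$ on the right-hand side, which is not admissible; the symmetric forward/backward expansion is precisely what eliminates it.
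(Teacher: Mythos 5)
Your proof is correct, and the arguments for \eqref{2} and \eqref{3} are essentially the same as the paper's: \eqref{2} by balancing the trivial and mean value bounds on a difference quotient, and \eqref{3} by expanding $\varphi$ (equivalently $D\varphi$) to one higher order along a segment, isolating the top coefficient, and optimizing over the step length; in both cases the reduction to operator norm uses that a bounded symmetric bilinear form has norm equal to its supremum on the diagonal.

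Your treatment of \eqref{4}, however, departs genuinely from the paper. You use a centered second-order Taylor expansion, subtracting the expansions at $\pm t$ so that the term $\psi''(0)=D^2\varphi(x)(h,h)$ cancels identically, and are left only with $\Vert\varphi\Vert_0$ and $[D^2\varphi]_\vartheta$. This makes \eqref{4} self-contained and independent of \eqref{3}. The paper instead uses a one-sided second-order expansion, which produces a $\Vert D^2\varphi\Vert_0$ term on the right; it then invokes \eqref{3} (already proved) to replace $\Vert D^2\varphi\Vert_0$ by $\Vert D\varphi\Vert_0^{\vartheta/(1+\vartheta)}[D^2\varphi]_\vartheta^{1/(1+\vartheta)}$ and absorbs the resulting $\Vert D\varphi\Vert_0$ contribution into the left-hand side via Young's inequality, before minimizing in the step length. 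Your version is slightly cleaner in that it avoids the dependency and the absorption step; the paper's version avoids the centered expansion at the cost of chaining \eqref{3} into the proof. One remark: your closing claim that a ``naive one-sided second-order expansion would leave a term in $\Vert D^2\varphi\Vert_0$ on the right-hand side, which is not admissible'' overstates the difficulty — that term \emph{is} admissible once \eqref{3} is in hand, which is exactly what the paper exploits.
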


\begin{proof}
Let us fix $\varphi \in\,C^1_b(H)$ and $x,y \in\,H$. Then, for every $\vartheta \in\,(0,1)$ we have
\[|\varphi(x+y)-\varphi(x)|\leq 2\,\Vert \varphi\Vert_0^{1-\vartheta}\,\left|\int_0^1 \langle D\varphi(x+\lambda y),y\rangle_H\,d\lambda\right|^\vartheta\leq 2\,\Vert \varphi\Vert_0^{1-\vartheta}\,\Vert D\varphi\Vert_0^\vartheta \Vert y\Vert _H^\vartheta,\]
so that \eqref{2} follows. 

Now, if  we fix $\varphi \in\,C^{2+\vartheta}_b(H)$, for every $\mu>0$	and $x, z \in\,H$, with $\Vert z\Vert_H=1$, we have
\begin{equation}
\label{5}
\begin{array}{ll}
\ds{\varphi(x+\mu z)=}  &  \ds{\varphi(x)+\mu\,\langle D\varphi(x),z\rangle_H+\frac {\mu^2} 2\langle D^2\varphi(x)z,z\rangle_H}\\[10pt]
&\ds{+\mu^2\int_0^1(1-r)\langle[D^2\varphi(x+r\mu z)-D^2\varphi(x)]z,z\rangle_H\,dr.	}
\end{array}
\end{equation}
By proceeding as in \cite[proof of Theorem 2.3.5]{DPZ2}, we use \eqref{5} to prove \eqref{3}. Actually, thanks to \eqref{5} we have
\[\begin{array}{ll}
\ds{\frac {\mu^2} 2\left|\langle D^2\varphi(x)z,z\rangle_H\right|}  &  \ds{\leq \left|\varphi(x+\mu z)-\varphi(x)-\mu\,\langle D\varphi(x),z\rangle_H\right|+\mu^{2+\vartheta}[D^2\varphi]_\vartheta\int_0^1(1-r)r^\vartheta\,dr,}\end{array}\]
so that
\[\Vert D^2\varphi\Vert_0\leq \frac 2\mu \,\Vert D\varphi\Vert_0+c_\vartheta \,\mu^\vartheta\,[D^2\varphi]_\vartheta,\ \ \ \ \mu>0.\]
If we take the minimum over $\mu>0$, we get
\eqref{3}.

Finally, by using again \eqref{5}, we have
\[\mu\,\left|\langle D\varphi(x),z\rangle_H\right|\leq \left|\varphi(x+\mu z)-\varphi(x)\right|+\frac {\mu^2} 2\left|\langle D^2\varphi(x)z,z\rangle_H\right|+\mu^{2+\vartheta}[D^2\varphi]_\vartheta\int_0^1(1-r)r^\vartheta\,dr,\]
so that, in view of \eqref{3}, we get
\[\begin{array}{ll}
\ds{\Vert D\varphi\Vert_0}& \ds{\leq \frac 2\mu\,\Vert\varphi\Vert_0+\frac{c_{2,\vartheta}\mu}2\,\Vert D\varphi\Vert_0^{\frac{\vartheta}{1+\vartheta}}\,[D^2\varphi]_\vartheta^{\frac 1{1+\vartheta}}+c_\vartheta \mu^{1+\vartheta}[D^2\varphi]_\vartheta}\\[10pt]
&\ds{\leq \frac 2\mu\,\Vert\varphi\Vert_0+\frac 12 \Vert D\varphi\Vert_0+ c_\vartheta \mu^{1+\vartheta}[D^2\varphi]_\vartheta.}
	\end{array}\]
This implies that
\[\Vert D\varphi\Vert_0\leq \frac{4}{\mu} \,\Vert\varphi\Vert_0+
c_\vartheta \mu^{1+\vartheta}[D^2\varphi]_\vartheta,\ \ \ \ \ \mu>0,\]
and if we minimize once again with respect to $\mu>0$ we obtain \eqref{4}.
\end{proof}

\begin{Remark}
{\em As a consequence of \eqref{2}, \eqref{3} and \eqref{4}, we have that for every $\vartheta \in\,(0,1)$ there exists some $c_\vartheta>0$ such that for every $\varphi \in\,C^{2+\vartheta}_b(H)$
\begin{equation}
\label{6}
[\varphi]_\vartheta \,\Vert D^2\varphi\Vert_0\leq c_\vartheta\,\Vert \varphi\Vert_0\,[D^2\varphi]_\vartheta.	
\end{equation}
Actually, from \eqref{3} and \eqref{4}, we have
\begin{equation}
\label{7}
\Vert D^2\varphi\Vert_0\leq c_{2,\vartheta}	\left(c_{3,\vartheta}\,\Vert\varphi\Vert_0^{\frac{1+\vartheta}{2+\vartheta}}\,[D^2\varphi]_\vartheta^{	\frac{1}{2+\vartheta}}\right)^{\frac\vartheta{1+\vartheta}}\,[D^2\varphi]_\vartheta^{\frac 1{1+\vartheta}}\leq c_{2,\vartheta}\,c_{3,\vartheta}^{\frac 1{1+\vartheta}}\,\Vert\varphi\Vert_0^{\frac{\vartheta}{2+\vartheta}}\,[D^2\varphi]_\vartheta^{\frac 2{2+\vartheta}}.	
\end{equation}
Moreover, thanks to \eqref{2} and \eqref{4} we have
\[[\varphi]_\vartheta \leq c_{1,\vartheta}\,\Vert\varphi\Vert_0^{1-\vartheta}\,\left(c_{3,\vartheta}\,\Vert\varphi\Vert_0^{\frac{1+\vartheta}{2+\vartheta}}\,[D^2\varphi]_\vartheta^{	\frac{1}{2+\vartheta}}\right)^\vartheta=c_{1,\vartheta}\,c_{3,\vartheta}^\vartheta\,\Vert\varphi\Vert_0^{\frac{2}{2+\vartheta}}\,[D^2\varphi]_\vartheta^{	\frac{\vartheta}{2+\vartheta}}.\]
Therefore, if we combine together this last inequality with \eqref{7}, we obtain \eqref{6}.

}	
\end{Remark}

\subsection{The Ornstein-Uhlenbeck semigroup}
\label{ssec2.3}
By following \cite[Chapter 6]{DPZ2}, we recall here some results about the Ornstein-Uhlenbeck semigroup and the associated Kolmogorov equation. 

Let  $A:D(A)\subset H\to H$ be the generator of a $C_0$-semigroup $e^{tA}$ and  let $Q$ be an operator in $\mathcal{L}^+(H)$. For every $t\geq 0$ we define
\[Q_t :=\int_0^t e^{sA}Qe^{sA^\star}\,ds,\]
and we assume that $Q_t \in\,\mathcal{L}_1(H)$, for every $t\geq 0$. Thus, we can introduce the centered Gaussian measure $\mathcal{N}_{Q_t}$ defined on $H$ with covariance $Q_t$, and  we can define 
\begin{equation}
\label{10}
R_t\varphi(x):=\int_H\varphi(e^{tA}x+y)\,\mathcal{N}_{Q_t}(dy),\ \ \ \ \ x \in\,H,\ \ \ t\geq 0,	
\end{equation}
for every $\varphi$ in $B_b(H)$.
 $R_t$ is   the {\em Ornstein-Uhlenbeck} semigroup associated with $A$ and $Q$.
In what follows, we  assume that 
 \begin{equation}
\label{8}
e^{tA}(H)\subset Q_t^{1/2}(H),\ \ \ \ t>0.	\end{equation}
and we define
\[\Lambda_t:=Q_t^{-1/2}e^{tA},\ \ \ \ t>0.\]
As shown e.g. in \cite[Theorem 6.2.2]{DPZ2}, as a consequence of assumption \eqref{8}  we have that $R_t\varphi \in\,C^\infty_b(H)$, for every $\varphi \in\,B_b(H)$ and $t>0$, and for every $n \in\,\mathbb{N}\cup \{0\}$ there exists some $c_n>0$ such that
\begin{equation}
\label{61}	
\Vert D^nR_t\varphi\Vert_0\leq c_n\,\Vert \Lambda_t\Vert_{\mathcal{L}(H)}^n\,\Vert \varphi\Vert_0.	
\end{equation}
Moreover, if we fix $\a \in\,(0,1)$ and assume $\varphi \in\,C^\a_b(H)$ we have
\begin{equation}
\label{78}	
[D^nR_t\varphi]_\a\leq c_n\,\Vert \Lambda_t\Vert_{\mathcal{L}(H)}^n\,\Vert e^{tA}\Vert_{\mathcal{L}(H)}^\a\,[\varphi]_\a,
\end{equation}
so that we conclude that for all $\a \in\,[0,1)$ and $\varphi \in\,C^\a_b(H)$
\begin{equation}
\label{60}	
\Vert D^nR_t\varphi\Vert_\a \leq c_n\,\Vert \Lambda_t\Vert_{\mathcal{L}(H)}^n\,\Vert \varphi\Vert_{t,\a},\ \ \ \ \ t>0,	
\end{equation}
where, for every $t\geq 0$ and $\a \in\,(0,1)$,
\begin{equation}
\label{pht_ta}
\Vert \varphi\Vert_{t,\a}:=\left(\Vert \varphi\Vert_0+\Vert e^{tA}\Vert_{\mathcal{L}(H)}^\a\,[\varphi]_\a\right),	
\end{equation}

By the interpolation inequality \eqref{1}, for every $n \in\,\mathbb{N}\cup \{0\}$ and $0\leq\a\leq \beta<1$ and for every $\varphi \in\,C^\beta_b(H)$ and $t>0$, we have
\[\begin{array}{ll}
\ds{\Vert D^n R_t\varphi\Vert_{1+\a}}  &  \ds{\leq \Vert D^n R_t\varphi\Vert_\beta^{\beta-\a}\,\Vert D^n R_t\varphi\Vert_{1+\beta}^{1-(\beta-\a)}}\\[10pt]
&\ds{=\Vert D^n R_t\varphi\Vert_\beta^{\beta-\a}\left(\Vert D^n R_t\varphi\Vert_0+\Vert D^{n+1} R_t\varphi\Vert_\beta\right)^{1-(\beta-\a)}.}	
\end{array}
\]
Hence, 
thanks to \eqref{60}, we get
\[\begin{array}{ll}
\ds{\Vert D^n R_t\varphi\Vert_{1+\a}}  &  \ds{\leq c_{n}^{\,\beta-\a}\Vert \Lambda_t\Vert_{\mathcal{L}(H)}^{n(\beta-\a)}\,\Vert \varphi\Vert_{t,\beta}^{\beta-\a}\left(c_n\,\Vert \Lambda_t\Vert_{\mathcal{L}(H)}^{n}\,\Vert \varphi\Vert_0+c_{n+1}\,\Vert \Lambda_t\Vert_{\mathcal{L}(H)}^{n+1}\Vert \varphi\Vert_{t,\beta}\right)^{1-(\beta-\a)}}\\[10pt]
&\ds{\leq c_{\a,\beta,n}\,\Vert \varphi\Vert_{t,\beta}\left(\Vert \Lambda_t\Vert_{\mathcal{L}(H)}^n+\Vert \Lambda_t\Vert_{\mathcal{L}(H)}^{n+1-(\beta-\a)}\right).}	
\end{array}\]
In particular, this allows to conclude that for every $n \in\,\mathbb{N}$ and $0\leq \a\leq \beta\leq 1$ and for every $\varphi \in\,C_b^\beta(H)$ 
\begin{equation}
\label{65}
\Vert D^{n}R_t\varphi\Vert_{\a}\leq c_{\a,\beta,n}\,\left(\Vert \Lambda_t\Vert_{\mathcal{L}(H)}^{n-1}+\Vert \Lambda_t\Vert_{\mathcal{L}(H)}^{n-(\beta-\a)}\right)\,\Vert \varphi\Vert_{t,\beta}.	
\end{equation}

\medskip

Next, we recall that in \cite[Proposition 6.2.9]{DPZ2} it is shown that for every $\varphi \in\,C^1_b(H)$ and $x \in\,H$
\[ \text{Tr}\,[QD^2R_t\varphi(x)]=\int_H\langle Q_t^{-1/2} y,\Lambda_tQe^{tA^\star}D\varphi(e^{tA}x+y)\rangle_H\,\mathcal{N}_{Q_t}(dy),\]
so that, if we assume that 
\begin{equation}
\label{**}	
\Lambda_tQe^{tA^\star} \in\,\mathcal{L}_2(H),
\end{equation}
 we have
\[\sup_{x \in\,H}\Vert QD^2R_t\varphi(x)\Vert_{\mathcal{L}_1(H)}\leq \Vert \Lambda_tQe^{tA^\star}\Vert_{\mathcal{L}_2(H)}\,\Vert D\varphi\Vert_0,\ \ \ \ \ t>0.	
\]
Therefore, thanks to the semigroup law and \eqref{65} we conclude that for every  $\varphi \in\,C^\beta_b(H)$ 
\begin{equation}
\label{16}
\sup_{x \in\,H}\Vert QD^2R_t\varphi(x)\Vert_{\mathcal{L}_1(H)}\leq 
c\,\Vert \Lambda_{t/2}Qe^{tA^\star/2}\Vert_{\mathcal{L}_2(H)}\Vert \Lambda_{t/2}\Vert_{\mathcal{L}(H)}^{1-\beta}\Vert \varphi\Vert_{\beta},\ \ \ \ \ t>0.	\end{equation}

Moreover,  we recall that in \cite[Proposition 6.2.5]{DPZ2} it is shown that if the operator $\Lambda_tA$ has a continuous extension $\overline{\Lambda_tA}$ to $H$, for every $t>0$, then for every $\varphi \in\,B_b(H)$ and $x \in\,H$
\begin{equation}
\label{17}
DR_t\varphi(x) \in\,	D(A^\star),\ \ \ \ \Vert A^\star DR_t\varphi\Vert_0\leq \Vert \overline{\Lambda_tA}\Vert_{\mathcal{L}(H)}\,\Vert \varphi\Vert_0,\ \ \ \ t>0.
\end{equation}

Now, we introduce the parabolic equation in $H$
\begin{equation}
\label{OU}
D_tu(t,x)=\frac 12 \text{Tr}\left[QD_x^2u(t,x)\right]+\langle x,A^\star D_x u(t,x)\rangle_H,\ \ \ \ \ u(0,x)=\varphi(x).	
\end{equation}
\begin{Definition}
A function $u:[0,+\infty)\times H\to \mathbb{R}$ is a {\em classical solution} of problem \eqref{OU} if
\begin{enumerate}
\item[1.] $u$ is continuous in 	$[0,+\infty)\times H$ and $u(0,\cdot)=\varphi$.
\item[2.] $u(t,\cdot) \in\,C^2_b(H)$, for all $t>0$,   and $QD^2_xu(t,x) \in\,\mathcal{L}_1(H)$, for all $t>0$ and $x \in\,H$.
\item[3.] $D_x u(t,x) \in\,D(A^\star)$, for all $t>0$ and $x \in\,H$.
\item[4.] $u(\cdot,x)$ is differentiable in $(0,+\infty)$ for every $x \in\,H$ and $u$ satisfies equation \eqref{OU}.
\end{enumerate}

	\end{Definition}
	
	In \cite[Theorem 6.2.4]{DPZ2} it is shown that if we assume conditions \eqref{8} and \eqref{**} and we assume that the operator $\Lambda_tA$ has a continuous extension to $H$, then for every $\varphi \in\,B_b(H)$ the function 
	\[u(t,x)=R_t\varphi(x)\]
	is the unique classical solution of equation \eqref{OU}.

\section{Assumptions and main results}
\label{sec3}

\subsection{Assumptions} In what follows, we shall make the following hypotheses.
\begin{Hypothesis}
\label{H1}
\begin{enumerate}
\item The mapping $\sigma:H\times \mathbb{R}\to\mathcal{L}(H)$ is Lipschitz continuous and there exist an operator $Q \in\,\mathcal{L}^+(H)$, a  continuous mapping $f:H\times \mathbb{R}\to\mathcal{L}_1(H)$ and a constant $\delta>0$ such that
\begin{equation}
\label{34}	
\sigma^\star\sigma(x,r)=Q+\delta\,f(x,r),\ \ \ \ x \in\,H,\ \ r \in\,\mathbb{R}.
\end{equation}

\item For every fixed $x \in\,H$, the function $f(x,\cdot):\mathbb{R}\to \mathcal{L}_1(H)$ is differentiable. Both $f$ and $\partial_r f$  are Lipschitz continuous in both variables, uniformly with respect to the other. Moreover
	\begin{equation}
	\label{29-bis}
	\sup_{x \in\,H}\Vert f(x,r)\Vert_{\mathcal{L}_1(H)}\leq c\left(1+|r|\right),\ \ \ \ \ \ r \in\,\mathbb{R}.	
	\end{equation}
\end{enumerate}
\end{Hypothesis}

\begin{Remark}
{\em  Let $H=L^2(\mathcal{O})$, for some smooth and bounded domain $\mathcal{O}\subset \mathbb{R}^d$, with $d\geq 1$. Let $\{e_i\}_{i \in\,\mathbb{N}}$ be an orthonormal basis of $H$ and let $\{\lambda_i\}_{i \in\,\mathbb{N}}$ be a sequence of non-negative real numbers. We assume  that $e_i \in\,L^\infty(\mathcal{O})$, for every $i \in\,\mathbb{N}$, and
\begin{equation}
\label{32}
\sum_{i=1}^\infty \la_i\,\Vert e_i\Vert_{L^\infty(\mathcal{O})}<\infty.	\end{equation}
For every $x, y \in\,H$ and $r \in\,\mathbb{R}$, we define 
	\[[f(x,r)y](\xi)=\sum_{i=1}^\infty \mathfrak{f}_i(x(\xi),r)\lambda_i \langle y,e_i\rangle_H e_i(\xi),\ \ \ \ \ \xi \in\,\mathcal{O},\] for some  continuous functions $\mathfrak{f}_i:\mathbb{R}\times \mathbb{R}\to \mathbb{R}$ such that $\mathfrak{f}_i(s,\cdot):\mathbb{R}\to\mathbb{R}$ is  differentiable, for every $s \in\,\mathbb{R}$ and $i \in\,\mathbb{N}$. We assume that  both $\mathfrak{f}_i$ and $\partial_r \mathfrak{f}_i$ are Lipschitz continuous in both variables, uniformly with respect to the other variable, and uniformly with respect to $i \in\,\mathbb{N}$. Moreover, we assume that 
	\begin{equation}
	\label{30}
	\sup_{i \in\,\mathbb{N}}\,\sup_{s \in\,\mathbb{R}}|\mathfrak{f}_i(s,r)|\leq c\left(1+|r|\right),\ \ \ \ \ \ r \in\,\mathbb{R}.	
	\end{equation}
With this choice of $H$ and $f$, we have that  condition 2 in Hypothesis \ref{H1} holds.
	
Actually, since $\mathfrak{f}_i(\cdot,r):\mathbb{R}\to\mathbb{R}$ is Lipschitz continuous, uniformly with respect to $r \in\,\mathbb{R}$ and $i \in\,\mathbb{N}$, for every $x, y\in\,H$ and $r \in\,\mathbb{R}$ we have
\[\begin{array}{l}
\ds{\sum_{i=1}^\infty |\langle \left[f(x,r)-f(y,r)\right]e_i,e_i\rangle_H|=\sum_{i=1}^\infty \lambda_i \,|\langle\left[\mathfrak{f}_i(x(\cdot),r)-\mathfrak{f}_i(y(\cdot),r) \right]e_i,e_i\rangle_H|}\\[18pt]
\ds{\leq \sum_{i=1}^\infty \Vert \mathfrak{f}_i(x(\cdot),r)-\mathfrak{f}_i(y(\cdot),r)\Vert_H \Vert e_i\Vert_{L^\infty(\mathcal{O})}\lambda_i\leq c\,\Vert x-y\Vert_H\,\,\sum_{i=1}^\infty \lambda_i \Vert e_i\Vert_{L^\infty(\mathcal{O})}.}	
\end{array}\]
In particular, thanks to  \eqref{32}, we can conclude that $f(\cdot,r):H\to \mathcal{L}_1(H)$ is Lipschitz continuous, uniformly with respect to $r \in\,\mathbb{R}$. In view of our assumptions, the same is true for $\partial_r f$. 

The Lipschitz continuity of $f(x,\cdot),\ \partial_r f(x,\cdot):\mathbb{R}\to \mathcal{L}_1(H)$, uniform with respect to $x \in\,H$, is proved in a similar way. However, in this case  \eqref{32} is not required and we only need the weaker condition
\[\sum_{i=1}^\infty \la_i<\infty.\]
Finally, \eqref{29-bis} is an immediate consequence of \eqref{30}.

}	
\end{Remark}

Now, we see some consequences of Hypothesis \ref{H1}.

\begin{Lemma}
For any function $\varphi:H\to\mathbb{R}$ we define
\begin{equation}
\label{33}	
F(\varphi)(x)=f(x,\varphi(x)),\ \ \ \ x\in\,H.
\end{equation}
Then, under Hypothesis \ref{H1} we have that $F$ maps $C^\vartheta_b(H)$ into $C^\vartheta_b(H;\mathcal{L}_1(H))$ and for every $\varphi \in\,C^\vartheta_b(H)$
\begin{equation}
\label{25}
\Vert F(\varphi)\Vert_\vartheta\leq c\left(1+\Vert \varphi\Vert_\vartheta\right).	
\end{equation}
Moreover for every $\varphi_1,\ \varphi_2 \in\,C^\vartheta_b(H)$ it holds
\begin{equation}
\label{28}
\Vert F(\varphi_1)-F(\varphi_2)\Vert_{\vartheta}\leq c\left(1+\Vert \varphi_1\Vert_\vartheta+\Vert \varphi_2\Vert_\vartheta\right)\Vert \varphi_1-\varphi_2\Vert_{\vartheta}.	\end{equation}
\end{Lemma}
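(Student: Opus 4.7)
The plan is to derive the sup-norm and H\"older-seminorm bounds in \eqref{25} and \eqref{28} separately, keeping careful track of the $\mathcal{L}_1(H)$-valued nature of $f$.

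For \eqref{25}, the sup-norm bound $\Vert F(\varphi)\Vert_0\leq c(1+\Vert \varphi\Vert_0)$ is immediate from \eqref{29-bis} evaluated pointwise. For the seminorm, I would insert an intermediate point to decompose
\[
F(\varphi)(x)-F(\varphi)(y)=\bigl[f(x,\varphi(x))-f(y,\varphi(x))\bigr]+\bigl[f(y,\varphi(x))-f(y,\varphi(y))\bigr],
\]
and then use Lipschitz continuity of $f$ separately in each argument to get
\[
\Vert F(\varphi)(x)-F(\varphi)(y)\Vert_{\mathcal{L}_1(H)}\leq L\,\Vert x-y\Vert_H+L\,[\varphi]_\vartheta\,\Vert x-y\Vert_H^\vartheta.
\]
For $\Vert x-y\Vert_H\leq 1$ we divide by $\Vert x-y\Vert_H^\vartheta$ and bound the first term by $L$; for $\Vert x-y\Vert_H>1$ we use the triangle inequality combined with the sup-norm bound already obtained. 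This yields \eqref{25}.

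For \eqref{28}, the sup-norm bound $\Vert F(\varphi_1)-F(\varphi_2)\Vert_0\leq L\,\Vert \varphi_1-\varphi_2\Vert_0$ is an immediate consequence of uniform Lipschitz continuity of $f$ in $r$. The main step is the H\"older seminorm. Exploiting the differentiability of $f$ in $r$, I would write $F(\varphi_1)(x)-F(\varphi_2)(x)=G(x)\,h(x)$, where $h(x):=\varphi_1(x)-\varphi_2(x)$ and
\[
G(x):=\int_0^1 \partial_r f\bigl(x,\varphi_2(x)+s(\varphi_1(x)-\varphi_2(x))\bigr)\,ds\in\mathcal{L}_1(H).
\]
Uniform Lipschitz continuity of $f$ in $r$ gives $\Vert G(x)\Vert_{\mathcal{L}_1(H)}\leq L$ for all $x\in H$, while Lipschitz continuity of $\partial_r f$ in both variables (uniform in the other) gives
\[
\Vert G(x)-G(y)\Vert_{\mathcal{L}_1(H)}\leq L\,\Bigl(\Vert x-y\Vert_H+\bigl([\varphi_1]_\vartheta+[\varphi_2]_\vartheta\bigr)\,\Vert x-y\Vert_H^\vartheta\Bigr).
\]
A routine product-rule decomposition
\[
\Vert G(x)h(x)-G(y)h(y)\Vert_{\mathcal{L}_1(H)}\leq \Vert G(x)\Vert_{\mathcal{L}_1(H)}\,|h(x)-h(y)|+\Vert G(x)-G(y)\Vert_{\mathcal{L}_1(H)}\,|h(y)|,
\]
combined with the previous bounds, yields the desired H\"older estimate $\leq c\,(1+\Vert \varphi_1\Vert_\vartheta+\Vert \varphi_2\Vert_\vartheta)\,\Vert \varphi_1-\varphi_2\Vert_\vartheta$ in the regime $\Vert x-y\Vert_H\leq 1$; the regime $\Vert x-y\Vert_H>1$ is again handled by the sup-norm bound.

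No step here is really difficult; the only mild subtlety is observing that $\sup_{x,r}\Vert\partial_r f(x,r)\Vert_{\mathcal{L}_1(H)}<\infty$, which is forced by the uniform Lipschitz continuity of $f(x,\cdot)$ combined with its differentiability in $r$. Everything else is careful accounting of constants in the standard product/composition rules for H\"older seminorms, now applied in the $\mathcal{L}_1(H)$-valued setting.
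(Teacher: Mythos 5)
Your proof is correct and follows essentially the same route as the paper: the sup-norm bounds come directly from \eqref{29-bis} and the Lipschitz continuity of $f$ in $r$, and the H\"older seminorm of $F(\varphi_1)-F(\varphi_2)$ is handled by the same fundamental-theorem-of-calculus representation via $\gamma(\psi)(x)=\partial_r f(x,\psi(x))$ along the segment joining $\varphi_2$ and $\varphi_1$, followed by the product rule for H\"older seminorms. The only cosmetic difference is in deriving the seminorm bound \eqref{27}: the paper turns the Lipschitz bound $\Vert f(x,r)-f(y,r)\Vert_{\mathcal{L}_1(H)}\leq c\Vert x-y\Vert_H$ into a $\Vert x-y\Vert_H^\vartheta$ bound by the interpolation trick $a = a^\vartheta a^{1-\vartheta}$ (Lipschitz on one factor, boundedness on the other), whereas you split the cases $\Vert x-y\Vert_H\leq 1$ and $\Vert x-y\Vert_H>1$; both yield the same estimate up to constants, so no substantive gap.
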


\begin{proof}
Due to \eqref{29-bis}, if $\varphi \in\,C^\vartheta_b(H)$  we have
	\begin{equation}
	\label{26}
	\Vert F(\varphi)\Vert_{0}\leq \sup_{x \in\,H}\Vert f(x,\varphi(x))\Vert_{\mathcal{L}_1(H)}\leq c\left(1+\Vert \varphi\Vert_0\right).	
	\end{equation}
	Moreover,
	for every $x, y \in\,H$
	\[\begin{array}{l}
\ds{	\Vert f(x,\varphi(x))-f(y,\varphi(y))\Vert_{\mathcal{L}_1(H)}}\\[10pt]
\ds{\leq \Vert f(x,\varphi(x))-f(y,\varphi(x))\Vert_{\mathcal{L}_1(H)}^\vartheta \,\Vert f(x,\varphi(x))-f(y,\varphi(x))\Vert_{\mathcal{L}_1(H)}^{1-\vartheta}}\\[10pt]
\ds{+\Vert f(y,\varphi(x))-f(y,\varphi(y))\Vert_{\mathcal{L}_1(H)}\leq c\,\Vert x-y\Vert_H^\vartheta\,\left(1+\Vert \varphi\Vert_0^{1-\vartheta}\right)+c\,|\varphi(x)-\varphi(y)|}\\[10pt]
\ds{\leq c\,\Vert x-y\Vert_H^\vartheta\,\left(1+\Vert \varphi\Vert_0^{1-\vartheta}+[\varphi]_\vartheta \right)\leq c\,\Vert x-y\Vert_H^\vartheta\,\left(1+\Vert \varphi\Vert_0+[\varphi]_\vartheta \right),}
	\end{array}\]
	so that 
	\begin{equation}
	\label{27}
	[F(\varphi)]_\vartheta	\leq c\,\left(1+\Vert \varphi\Vert_0+[\varphi]_\vartheta \right).
	\end{equation}
This, together with \eqref{26} allows to   conclude that $F(\varphi) \in\,C^\vartheta_b(H)$ and \eqref{25}  holds.
	
Concerning \eqref{28}, for every $\varphi_1, \varphi_2 \in\,C_b^\vartheta(H)$ we have	
	\begin{equation}
	\label{29}
\begin{array}{ll}
\ds{\Vert F(\varphi_1)-F(\varphi_2)\Vert_{0}} & \ds{\leq c\sup_{x \in\,H}\Vert f(x,\varphi_1(x))-f(x,\varphi_2(x))\Vert_{\mathcal{L}_1(H)}\,\leq c\,\Vert \varphi_1-\varphi_2\Vert_0.}	
\end{array}		
	\end{equation}
Moreover, for every $x, y \in\,H$ we have

\[\begin{array}{l}
\ds{(f(x,\varphi_1(x))-f(x,\varphi_2(x)))-(f(y,\varphi_1(y))-f(y,\varphi_2(y)))}	\\[10pt]
\ds{=\int_0^1\left[\gamma(s\varphi_1+(1-s)\varphi_2)(x)(\varphi_1-\varphi_2)(x)-\gamma(s\varphi_1+(1-s)\varphi_2)(y)(\varphi_1-\varphi_2)(y)\right]\,ds,}
\end{array}\]
where we have defined
\[\gamma(\varphi)(x)=\partial_rf(x,\varphi(x)),\ \ \ \ \ x \in\,H.\]
This implies that
\[\begin{array}{ll}
\ds{[ f(\cdot,\varphi_1)-f(\cdot,\varphi_2)]_\vartheta\leq \int_0^1\Vert\gamma(s\varphi_1+(1-s)\varphi_2)\Vert_{\vartheta}\,ds\,\Vert\varphi_1-\varphi_2\Vert_\vartheta.}\end{array}\]
Since, we are assuming that $\partial_r f$, like  $f$, is Lipschitz continuous with respect to each variable, uniformly with respect to the other, and  is clearly uniformly bounded, by using the same arguments we have used to prove \eqref{26} and \eqref{27}, we have
\[\Vert \gamma(s\varphi_1+(1-s)\varphi_2)\Vert_{\vartheta}\leq c\left(1+s\,\Vert \varphi_1\Vert_\vartheta+(1-s)\,\Vert \varphi_2\Vert_\vartheta\right),\]
and hence
\[[F(\varphi_1)-F(\varphi_2)]_\vartheta\leq 
c\left(1+\Vert \varphi_1\Vert_\vartheta+\Vert \varphi_2\Vert_\vartheta\right)\Vert \varphi_1-\varphi_2\Vert_\vartheta.\]
This, together with \eqref{29}, implies \eqref{28}.
	
\end{proof}

\begin{Hypothesis}
	\label{H2}
	\begin{enumerate}
\item The operator $A:D(A)\subset H\to H$ generates  a $C_0$-semigroup $e^{tA}$ and there exist $M, \omega>0$ such that
\begin{equation}
\label{67}
\Vert e^{tA}\Vert_{\mathcal{L}(H)}\leq M e^{-\omega t}.	
\end{equation}

\item 	If  $Q$ is the operator introduced in Hypothesis \ref{H1} and if we define
\[Q_t :=\int_0^t e^{sA}Qe^{sA^\star}\,ds,\ \ \ \ \ t\geq 0,\]
we have that $Q_t \in\,\mathcal{L}^+_1(H)$, for every $t\geq 0$.	
\item For every $t>0$, we have
\begin{equation}
\label{21}
e^{tA}(H)\subset Q_t^{1/2}(H).	
\end{equation}
\item If we define
\[\Lambda_t:=Q_t^{-1/2}e^{tA},\ \ \ \ t>0,\]
there exists some $\la>0$ such that
\begin{equation}
\label{20}
\Vert \Lambda_t\Vert_{\mathcal{L}(H)}\leq c\,(t\wedge 1)^{-1/2}e^{-\la t},\ \ \ \ \ t>0.	
\end{equation}
\item For every $t>0$ we have that $\Lambda_{t}Qe^{tA^\star} \in\,\mathcal{L}_2(H)$ and for every $\vartheta \in\,(0,1)$ there exist $\beta_\vartheta <1$ and $\a_\vartheta>0$ such that
\begin{equation}
\label{100}	
 \kappa_\vartheta(t):=\Vert \Lambda_{t}Qe^{tA^\star}\Vert_{\mathcal{L}_2(H)}\Vert \Lambda_{t}\Vert_{\mathcal{L}(H)}^{1-\vartheta} \leq c\,(t\wedge 1)^{-\beta_\vartheta} e^{-\a_\vartheta t},\ \ \ \ t>0.
\end{equation}
\end{enumerate}
\end{Hypothesis}

\begin{Hypothesis}
\label{H3}	
For every $(x,r) \in\,H\times \mathbb{R}$ and $t>0$ we have  
\[e^{tA}\sigma(x,r) \in\,\mathcal{L}_2(H).\]
Moreover, 
\[\Vert e^{tA}\sigma(x,r)\Vert_{\mathcal{L}_2(H)}\leq c\,(t\wedge 1)^{-\frac 14}\,\left(1+\Vert x\Vert_H+|r|\right),\ \ \ \ t>0,\]
and for every $(x,r), (y,s) \in\,H\times \mathbb{R}$
\[\Vert e^{tA}\sigma(x,r)-e^{tA}\sigma(y,r)\Vert_{\mathcal{L}_2(H)}\leq c\,(t\wedge 1)^{-\frac 14}\,\left(\Vert x-y\Vert_H+|r-s|\right),\ \ \ \ t>0.\]
\end{Hypothesis}

\begin{Remark}
{\em Let $\{e_i\}_{i \in\,\mathbb{N}}$ be an orthonormal basis in $H$ and assume that $A e_i=-\a_i e_i$ and $Q e_i=\gamma_i\,e_i$, for every $i \in\,\mathbb{N}$, with $\a_i,\ \gamma_i >0$, and $\a_i\uparrow +\infty$, as $i\to\infty$. By proceeding as in \cite[Example 6.2.11]{DPZ2}, we have that 
\[Q_t e_i=\frac{\gamma_i}{2\a_i}\left(1-e^{-2\a_i t}\right)e_i,\ \ \ \ i \in\,\mathbb{N},\]
so that $Q_t \in\,\mathcal{L}_1(H)$ if and only if 
\begin{equation}
\label{22}	\sum_{i=1}^\infty \frac{\gamma_i}{\a_i}<\infty.
\end{equation}
Moreover, 
\[\Lambda_t e_i=\left(\frac{2\,\a_i t\,e^{-\a_i t}}{\gamma_i\,(1-e^{-2\a_i t})}\right)^{1/2}t^{-1/2}e^{-\frac{\a_i}2 t}e_i,\ \ \ \ i \in\,\mathbb{N}.\]
In particular, if $\gamma_i\geq \gamma_0>0$, we have
\[
\Vert \Lambda_t\Vert_{\mathcal{L}(H)}\leq c\,t^{-1/2}e^{-\frac{\a_1}2 t},\ \ \ \ \ t>0,	
\]
so that \eqref{20} holds. Furthermore, 
\begin{equation}
\label{24}	
\Vert \Lambda_t Q e^{tA^\star}\Vert^2_{\mathcal{L}_2(H)}=2\sum_{i=1}^\infty \frac{\a_i\,\gamma_i\, e^{-2\a_i t}}{e^{2\a_i t}-1}\leq c\,t^{-1}e^{-2\a_1 t},
\end{equation}

When $A$ is the realization of the Laplace operator in an interval, endowed with Dirichlet boundary conditions, we have that $\a_i\sim i^2$ and \eqref{22} is satisfied, for every choice of $Q \in\,\mathcal{L}(H)$. 
If we assume that $Q=I$,  we have that \eqref{20} holds. Moreover,   thanks to \eqref{24}  we have
\[\Vert \Lambda_t Q e^{tA^\star}\Vert_{\mathcal{L}_2(H)}\Vert \Lambda_{t}\Vert_{\mathcal{L}(H)}^{1-\vartheta}\leq c\, t^{-1/2}\,e^{-\a_1 t}\,t^{-(1-\vartheta)/2}e^{-\frac{\a_1(1-\vartheta)t}2}=c\,t^{-(1-\vartheta/2)}e^{-\frac{\a_1(3-\vartheta)t}2},\]
and Condition (5) in Hypothesis \ref{H2} holds for every $\vartheta \in\,(0,1)$.
Notice also that in this case Hypothesis \ref{H3} is satisfied.}

\end{Remark}

 \begin{Hypothesis}
\label{H4}
The mapping $b:H\to H$ is Lipschitz continuous and bounded.		
\end{Hypothesis}

\bigskip

\subsection{Main results}
As we have done in Section \ref{sec2} for the linear Kolmogorov equation \eqref{OU}, we introduce here the notion of {\em classical solution} for the quasi-linear problem 
\begin{equation}
\label{quasi-linear-bis}
\left\{\begin{array}{l}
\ds{D_tu_\e(t,x)=\frac \e2 \text{Tr} \left[\sigma^\star\sigma(x,u_\epsilon(t,x))D^2_x	 u_\epsilon(t,x)\right]+\langle Ax+b(x),D u_\epsilon(t,x)\rangle_H,\ \ \ x \in\,H,\ \ \ t>0,}\\[10pt]
\ds{u_\epsilon(0,x)=g(x),\ \ \ x \in\,H.}
	\end{array}\right.
\end{equation}

\begin{Definition}
\label{def-classical}
A function $u_\e:[0,+\infty)\times H\to \mathbb{R}$ is a {\em classical solution} of problem \eqref{quasi-linear-bis} if the following conditions are satisfied.
\begin{enumerate}
\item[1.] It is continuous in 	$[0,+\infty)\times H$ and $u_\e(0,\cdot)=g$.
\smallskip
\item[2.] $u_\e(t,\cdot) \in\,C^{2}_b(H)$, for all $t>0$,   and $QD^2_xu_\e(t,x) \in\,\mathcal{L}_1(H)$, for all $(t,x) \in\,(0,+\infty)\times H$.
\smallskip

\item[3.] $u_\e(\cdot,x)$ is differentiable in $(0,+\infty)$, for every $x \in\,D(A)$. 
\smallskip

\item[4.] It satisfies equation \eqref{quasi-linear-bis}, for every   $(t,x) \in\,(0,+\infty)\times D(A)$.
\end{enumerate}
	
\end{Definition}

In what follows, for every $\e \in\,(0,1)$, $0<\vartheta< \eta<1$, $\varrho \in\,(0,1/2)$   and $T>0$,  we denote by 
$C_{\e,\varrho, \eta}((0,T];C^{2+\vartheta}_b(H))$ the space of all functions $ u \in\,C([0,T];C^\eta_b(H))\cap C((0,T];C^{2+\vartheta}_b(H))$ such that
\[\Vert u\Vert_{\e, \varrho, \eta, \vartheta, T}:=\sup_{t \in\,(0,T]}\left(\Vert u(t,\cdot)\Vert_\eta+\e^{\varrho}(t\wedge 1)^{\varrho}\Vert D_xu(t,\cdot)\Vert_{\vartheta}+\e^{\varrho+\frac 12}(t\wedge 1)^{\varrho+\frac 12}\Vert D^2_xu(t,\cdot)\Vert_\vartheta\right)<\infty.\]

\begin{Theorem}
\label{teo1}
Assume Hypotheses \ref{H1} to \ref{H4}, and fix  an arbitrary $g \in\,C^\eta_b(H)$, for some $\eta >1/2$. Then	there exists $\bar{\delta}>0$ such that for every $\delta\leq \bar{\delta}$ and $\e \in\,(0,1)$ there exists a unique classical solution $u_\e$ for equation \eqref{quasi-linear-bis}. Moreover, if we fix $\vartheta \in\,(0,(\eta-1/2)\wedge 1)$ and we define
\[\varrho=\frac{1-(\eta-\vartheta)}2,
\]
we have  that 
$u_\e \in\,C_{\e,\varrho, \eta}((0,T];C^{2+\vartheta}_b(H))$, 
for every $T>0$ and $\e \in\,(0,1)$, and 
\begin{equation}
	\Vert u_\e\Vert_{\e, \varrho, \eta, \vartheta, T}\leq c_\e\,\Vert g\Vert_\eta,\ \ \ \ \ \e \in\,(0,1),
\end{equation}
for some constant $c_\e>0$ independent of $T>0$.

\end{Theorem}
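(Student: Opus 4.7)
The plan is to cast \eqref{quasi-linear-bis} in mild form via the Ornstein-Uhlenbeck semigroup $R^\e_t$ generated by $\mathcal{L}_\e\varphi=(\e/2)\text{Tr}[QD^2_x\varphi]+\langle Ax,D_x\varphi\rangle_H$, and to run a Banach fixed-point argument in $C_{\e,\varrho,\eta}((0,T];C^{2+\vartheta}_b(H))$, whose weighted norm is tailored precisely to the smoothing behaviour of $R^\e_t$. Explicitly, after absorbing the $QD^2_xu_\e$ part of $\sigma^\star\sigma\,D^2_xu_\e$ into $\mathcal{L}_\e$, one writes \eqref{intro8} as $u_\e=\mathcal{K}_\e(u_\e)$ with
\[
\mathcal{K}_\e(u)(t,x):=R^\e_tg(x)+\int_0^t R^\e_{t-s}\!\left(\tfrac{\e\delta}{2}\,\text{Tr}\bigl[F(u(s,\cdot))\,D^2_xu(s,\cdot)\bigr]+\langle b(\cdot),D_xu(s,\cdot)\rangle_H\right)(x)\,ds,
\]
and estimates each piece by using the results of Section \ref{ssec2.3}.

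For the linear part $R^\e_tg$, the bound \eqref{65} with $n=1,2$, $\a=\vartheta$, $\beta=\eta$ yields exactly the singularities $(t\wedge 1)^{-\varrho}$ on $D_xR^\e_tg$ and $(t\wedge 1)^{-\varrho-1/2}$ on $D^2_xR^\e_tg$, for the choice $\varrho=(1-(\eta-\vartheta))/2$, with total contribution to $\Vert\mathcal{K}_\e(u)\Vert_{\e,\varrho,\eta,\vartheta,T}$ controlled by $\Vert g\Vert_\eta$, uniformly in $T$, thanks to the exponential decay \eqref{20}. For the nonlinear pieces, the drift contribution $\langle b,D_xu\rangle_H$ is handled through Hypothesis \ref{H4} and \eqref{65}, while the trace contribution is estimated using the Nemytskii bounds \eqref{25}--\eqref{28} on $F$. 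Because the latter is morally bilinear in $u$, on a ball of radius $R\sim\Vert g\Vert_\eta$ the Lipschitz constant of $\mathcal{K}_\e$ is of order $c_\e\delta(1+R)$, so that imposing $\delta\leq\bar\delta$ produces both a strict contraction and a closed quadratic inequality $X\leq c_\e\Vert g\Vert_\eta+c_\e\delta\,X^2$ for $X:=\Vert u_\e\Vert_{\e,\varrho,\eta,\vartheta,T}$, whose small-solution branch gives the claimed bound $X\leq c_\e\Vert g\Vert_\eta$, uniformly in $T$.

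To promote the mild solution to a classical one in the sense of Definition \ref{def-classical}, one follows the pattern of \cite[Theorem 6.2.4]{DPZ2}. Conditions 1 and 2 are essentially built into the functional space (continuity at $t=0$ from $u_\e\in C([0,T];C^\eta_b(H))$, and $u_\e(t,\cdot)\in C^2_b(H)$ from the definition of $C_{\e,\varrho,\eta}((0,T];C^{2+\vartheta}_b(H))$); the trace-class property $QD^2_xu_\e(t,x)\in\mathcal{L}_1(H)$ follows from \eqref{16} applied both to $R^\e_tg$ and to the integral term, using that the integrand lies in $C^\vartheta_b(H)$ after the fixed-point step and that $\kappa_\vartheta$ is integrable near zero by \eqref{100}. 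Differentiability in time on $D(A)$ and pointwise validity of \eqref{quasi-linear-bis} are then obtained by freezing the coefficient $\sigma^\star\sigma(\cdot,u_\e(\cdot,\cdot))$ at the fixed point and invoking the classical theory for the resulting inhomogeneous linear Kolmogorov equation with sufficiently regular data; uniqueness in $C_{\e,\varrho,\eta}((0,T];C^{2+\vartheta}_b(H))$ is then an immediate by-product of the same contraction estimate.

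The main obstacle is controlling the quasi-linear trace term $\text{Tr}[F(u_\e)D^2_xu_\e]$ in the H\"older seminorm $[\cdot]_\vartheta$ after one application of $R^\e_{t-s}$: a direct use of \eqref{65} with $\beta=\vartheta$ produces a non-integrable $(t-s)^{-1}$ singularity once paired with the $s^{-\varrho-1/2}$ weight on $\Vert D^2_xu_\e(s,\cdot)\Vert_\vartheta$. I would bypass this by splitting $R^\e_{t-s}=R^\e_{(t-s)/2}\,R^\e_{(t-s)/2}$ and using the smoothing to lift the input strictly above $C^\vartheta_b(H)$ before applying $D^2_x$, so that the effective $\beta$ in \eqref{65} becomes strictly larger than $\vartheta$ and the singularity becomes integrable; the interpolation estimates \eqref{6} and \eqref{7} of Section \ref{sec2} are the natural tool to trade $\Vert D^2_xu_\e\Vert_\vartheta$ against $\Vert u_\e\Vert_\eta$ in this step, and the $T$-independence of the final bound comes from the exponential decay built into $\Vert\Lambda_t\Vert_{\mathcal{L}(H)}$ and $\kappa_\vartheta(t)$.
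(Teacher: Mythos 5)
Your local-existence step matches the paper: the mild reformulation via the Ornstein--Uhlenbeck semigroup, the weighted space $C_{\e,\varrho,\eta}((0,T];C^{2+\vartheta}_b(H))$ tailored to the smoothing rates, and the Nemytskii bounds \eqref{25}--\eqref{28} are exactly the ingredients the paper uses in Section \ref{sec4}. Your proposed remedy for the $(t-s)^{-1}$ singularity in the $[\,\cdot\,]_\vartheta$ estimate of the trace term (splitting $R^\e_{t-s}=R^\e_{(t-s)/2}R^\e_{(t-s)/2}$) is different from the paper's (which splits the integral at $s=\e^{-1}\Vert h\Vert_H^2$ and treats small and large increments $h$ separately in Step~3 of Lemma \ref{lemma4.4}); both are in the same spirit and yours would likely work, but this is a secondary point.

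The genuine gap is in global existence and the $T$-independence of $c_\e$. You propose to close a quadratic inequality $X\le c_\e\Vert g\Vert_\eta+c_\e\delta X^2$ directly from the fixed-point estimates. But the fixed-point estimate \eqref{74} carries an explicit factor $(T\vee 1)$: the $\eta$-norm $\Vert u(t,\cdot)\Vert_\eta$ contains the sup-norm $\Vert u(t,\cdot)\Vert_0$, which has \emph{no} exponential decay factor in the convolution (only the H\"older seminorm and the derivative norms pick up $e^{-\la t}$ from $\Lambda_t$). So the fixed-point machinery alone yields a bound that grows in $T$, and the quadratic inequality cannot close uniformly in $T$; one only gets local existence on an $\e$-dependent interval $[0,T_1]$, which is precisely what the paper's Theorem \ref{lemma1} states. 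The paper's resolution is a separate a-priori bound (Section~7) whose key input is the maximum principle $\sup_{t}\Vert u_\e(t,\cdot)\Vert_0\le\Vert g\Vert_0$, estimate \eqref{113}. This is \emph{not} an analytic consequence of the mild formulation: it is obtained from the probabilistic representation $u_\e(t,x)=\mathbb{E}\,g(X^{t,x}_\e(t))$ of \eqref{106}, which in turn requires (i) proving that the mild solution is classical, (ii) constructing the solution of the stochastic PDE \eqref{stoch-pde}, and (iii) an It\^o-formula argument with Yosida approximants $J_m=m(m-A)^{-1}$ and finite-dimensional projections $W^m$ of the noise, plus the Gronwall-type convergence \eqref{109-bis}. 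None of this appears in your proposal, and without it the claimed bound $\Vert u_\e\Vert_{\e,\varrho,\eta,\vartheta,T}\le c_\e\Vert g\Vert_\eta$ with $c_\e$ independent of $T$ does not follow. The maximum principle also decouples the sup-norm from the seminorms, turning the a-priori inequality into a \emph{linear} one in $N_\e(u_\e)$ (Lemma at the start of Section~7, using the interpolation estimates \eqref{4}, \eqref{6}, \eqref{7}); this is what allows $\bar\delta$ to be chosen depending only on $\Vert g\Vert_0$, hence independent of $\e$, whereas the small-root condition for your quadratic inequality would tie $\bar\delta$ to $c_\e$.

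A smaller point: you assert that uniqueness ``is an immediate by-product of the same contraction estimate,'' but the contraction is local in time (the contraction constant involves $\e^{-1/2}(T\wedge 1)^{1/2}$). Global uniqueness in $C_{\e,\varrho,\eta}((0,T];C^{2+\vartheta}_b(H))$ is established in the paper by a continuation argument on the maximal interval of agreement, restarting the contraction from time $t_0$ with initial datum $u_1(t_0)=u_2(t_0)$ whose $C^\eta_b$ norm is controlled by the a-priori bound \eqref{122}; again the global a-priori bound (and hence the maximum principle) is indispensable here.
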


\bigskip
Once proved the existence and uniqueness of a classical solution $u_\e$ for problem \eqref{quasi-linear-bis}, for every $\e>0$, we fix arbitrary $t>0$ and $x \in\,H$ and we introduce the following stochastic PDE
\begin{equation}
\label{stoch-pde}
\left\{\begin{array}{rl}
\ds{dX(s)=} & \ds{\left[A	X(s)+b(X(s))+ \sigma(X(s),u_\e(t-s,X(s)))\,\varphi(s)\right]\,ds}\\[14pt]
& \ds{+\sqrt{\e}\,\sigma(X(s),u_\e(t-s,X(s)))\,dW_s,}\\[10pt]
\ds{X(0)=} & \ds{x.}
	\end{array}\right.
\end{equation}
Here $W_t$, $t\geq 0$, is a cylindrical Wiener process on  $H$, defined  on  the filtered probability space $(\Omega, \mathcal{F}, \{\mathcal{F}_t\}_{t\geq 0}, \mathbb{P})$,  such that for every $h, k \in\,H$ and $t, s\geq 0$
\[\mathbb{E}\,\langle W_t,h\rangle_H\langle W_s,k\rangle_H=(t\wedge s)\,\langle h,k\rangle_H,\]
 and $\varphi$ is a predictable process in $L^2(\Omega;L^2(0,t;H))$.
\begin{Definition}
An adapted process $X^{t, x}_{\varphi,\e} \in\,L^2(\Omega;C([0,t];H))$ is a {\em mild solution} for equation \eqref{stoch-pde} if for every $s \in\,[0,t]$
\begin{equation}
\label{35}
\begin{array}{ll}
\ds{	X_{\varphi, \e}^{t,x}(s)=}  &  \ds{e^{sA}x+\int_0^s e^{(s-r)A}b(X_{\varphi, \e}^{t,x}(r))	\,dr+\int_0^se^{(s-r)A}\sigma(X_{\varphi, \e}^{t,x}(r),u_\e(t-r,X_{\varphi, \e}^{t,x}(r)))\,\varphi(r)\,dr}\\[18pt]
&\ds{+\sqrt{\e}\,\int_0^se^{(s-r)A}\sigma(X_{\varphi, \e}^{t,x}(r),u_\e(t-r,X_{\varphi, \e}^{t,x}(r)))\,dW_r.}
\end{array}
\end{equation}
	
\end{Definition}

\begin{Theorem}
\label{teo2}
	Suppose that Hypotheses \ref{H1} to  \ref{H4} hold, and fix any $g \in\,C^\eta_b(H)$, with $\eta>1/2$, $\e \in\,(0,1)$ and $\delta \in\,[0,\bar{\delta})$, where $\bar{\delta}$ is the constant introduced in Theorem \ref{teo1}.  Moreover, fix an arbitrary predictable process in $L^2(\Omega;C([0,t];H))$ such that 
	\begin{equation}
	\label{sbm10}
	\int_0^t \Vert \varphi(s)\Vert_H^2\,ds\leq M,\ \ \ \ \ \mathbb{P}-\text{a.s.}	
	\end{equation}
	for some $M>0$. Then,  if we assume that $b:H\to H$ is Lipschitz-continuous,  equation \eqref{stoch-pde} admits a  unique mild solution $X^{t, x}_{\varphi, \e} \in\,L^2(\Omega;C([0,t];H))$, for  every $x \in\,H$ and $t>0$. 
	
\end{Theorem}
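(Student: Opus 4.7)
The plan is to apply Banach's fixed point theorem to the Picard map induced by the mild formulation \eqref{35} on the space $\mathcal{Y}_t:=L^2(\Omega;C([0,t];H))$. Writing $\Sigma_\e(s,x):=\sigma(x,u_\e(t-s,x))$, define $\Gamma:\mathcal{Y}_t\to\mathcal{Y}_t$ by
\begin{align*}
\Gamma(X)(s):=&\,e^{sA}x+\int_0^se^{(s-r)A}b(X(r))\,dr+\int_0^se^{(s-r)A}\Sigma_\e(r,X(r))\varphi(r)\,dr\\
&+\sqrt\e\int_0^se^{(s-r)A}\Sigma_\e(r,X(r))\,dW_r.
\end{align*}
A unique fixed point of $\Gamma$ will then furnish existence and uniqueness of the mild solution $X_{\varphi,\e}^{t,x}$.

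The crucial input is Theorem \ref{teo1}. Since $\eta>1/2$ and $\vartheta\in(0,\eta-1/2)$, the exponent $\varrho=(1-(\eta-\vartheta))/2$ satisfies $\varrho<1/4$. In particular, $u_\e$ is uniformly bounded on $[0,t]\times H$ and, for every $s\in[0,t)$, $\Vert Du_\e(t-s,\cdot)\Vert_0\leq c_\e((t-s)\wedge 1)^{-\varrho}$. Combining this with the Lipschitz continuity of $\sigma$ (Hypothesis \ref{H1}) and the Hilbert-Schmidt bounds of Hypothesis \ref{H3}, one obtains for $0\leq r<s\leq t$ and $x,y\in H$
\[
\Vert e^{(s-r)A}[\Sigma_\e(r,x)-\Sigma_\e(r,y)]\Vert_{\mathcal{L}_2(H)}\leq c_\e\,((s-r)\wedge 1)^{-1/4}\left(1+((t-r)\wedge 1)^{-\varrho}\right)\Vert x-y\Vert_H,
\]
together with the corresponding linear-growth bound. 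Well-posedness of $\Gamma$ on $\mathcal{Y}_t$ then follows by standard arguments: Hypothesis \ref{H4} handles the $b$-term; Cauchy-Schwarz together with the pathwise control \eqref{sbm10} handles the controlled $\sigma\varphi$-term; and the stochastic convolution is estimated via the Da Prato--Kwapień--Zabczyk factorisation method (with an integrability exponent $p>4$ and $\alpha\in(1/p,1/4)$), which converts $\sup_{s\leq t}$ estimates into $L^p$-in-time estimates of ordinary Itô integrals.

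Applying the same bounds to differences $\Gamma(X_1)-\Gamma(X_2)$ yields a Volterra-type inequality
\[
\E\sup_{s\leq T}\Vert\Gamma(X_1)(s)-\Gamma(X_2)(s)\Vert_H^2\leq C_\e\int_0^T\left(1+((t-r)\wedge 1)^{-2\varrho}\right)\E\sup_{\tau\leq r}\Vert X_1(\tau)-X_2(\tau)\Vert_H^2\,dr
\]
for every $T\in[0,t]$. Since $2\varrho<1$, the kernel is integrable on $[0,t]$; iterating the inequality yields a Lipschitz constant of order $C^n/n!$ for $\Gamma^n$, so some iterate is a strict contraction and Banach's theorem produces a unique fixed point; uniqueness for arbitrary mild solutions follows from the same inequality via a generalised Gronwall lemma. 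The main obstacle lies in the interplay of two singular kernels---the smoothing $((s-r)\wedge 1)^{-1/4}$ from Hypothesis \ref{H3}, needed to close $\sigma$ into $\mathcal{L}_2(H)$ against the cylindrical Wiener process, and the singularity $((t-r)\wedge 1)^{-\varrho}$ inherited from the blow-up of $Du_\e(t-r,\cdot)$ as $r\to t$. Both can concentrate at $r=s=t$, and the condition $\varrho<1/4$ (equivalently $\eta-\vartheta>1/2$) provided by Theorem \ref{teo1} is precisely what guarantees $1/2+2\varrho<1$, keeping the combined kernel integrable; the factorisation method is essential to extract $\sup$-in-time bounds in the presence of such singularities without analyticity of $e^{tA}$.
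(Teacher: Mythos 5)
Your strategy is sound and agrees with the paper on the crucial input, namely that the mild solution $u_\e$ of Theorem~\ref{teo1} furnishes the Lipschitz estimate
\[
\Vert e^{(s-r)A}[\Sigma_\e(r,x)-\Sigma_\e(r,y)]\Vert_{\mathcal{L}_2(H)}\leq c_\e\,((s-r)\wedge 1)^{-1/4}\bigl(1+((t-r)\wedge 1)^{-\varrho}\bigr)\Vert x-y\Vert_H,
\]
with $\varrho<1/4$ making the combined singularity $((s-r)\wedge1)^{-1/2}((t-r)\wedge1)^{-2\varrho}$ integrable. However, you and the paper organize the fixed-point argument differently. The paper works in the weighted space $\mathcal{K}_{\beta,t}(H)$ with norm $\sup_{s\in[0,t]}e^{-\beta s}\,\mathbb{E}\Vert X(s)\Vert_H^2$, where the exponential weight absorbs the singular kernel so that $\Lambda_{t,\e}$ is a strict contraction as soon as $\beta$ is large; stochastic factorization is then invoked only once, \emph{after} the fixed point is obtained, to show the solution lies in $L^2(\Omega;C([0,t];H))$. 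You instead try to run the contraction directly in $L^2(\Omega;C([0,t];H))$, applying factorization inside the contraction estimate and then iterating a Volterra inequality so that some power $\Gamma^n$ is a strict contraction. Both routes are legitimate, but the paper's decouples the contraction estimate (easy with a weight) from path regularity (factorization, done once), whereas yours requires repeated use of the sup-in-time control.

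There is a small but genuine technical mismatch in your write-up: you correctly observe that factorization requires an exponent $p>4$ (so that one can choose $\alpha\in(1/p,1/4)$ compatible with the $((s-r)\wedge 1)^{-1/4}$ smoothing), but the Volterra inequality you display is for $\mathbb{E}\sup_{s\leq T}\Vert\cdot\Vert_H^2$, which is not what the factorization method produces: squaring and Cauchy--Schwarz in the deterministic convolution would force $\alpha>1/2$, incompatible with $\alpha<1/4$. To make your scheme rigorous one should run the Picard iteration and Volterra argument in $L^p(\Omega;C([0,t];H))$ for a fixed $p>4$, obtaining a fixed point there; the resulting process lies a fortiori in $L^2(\Omega;C([0,t];H))$. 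Uniqueness in the larger $L^2$-class then follows, as you indicate, by applying the It\^o isometry at fixed time to the difference of two mild solutions, producing a singular Gronwall inequality of the kind handled by the generalized Gronwall lemma of \cite{Ye}. With this repair the argument is complete.
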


In what follows, the solution of the uncontrolled version of equation \eqref{stoch-pde}, corresponding to $\varphi=0$, will be denoted by $X^{t, x}_{\e}$.

Once proved Theorem \ref{teo2}, we are interested in studying the limiting behavior of $X^{t, x}_\epsilon$ as $\epsilon\downarrow 0$. More precisely, we want to prove that for every fixed $t>0$ and $x \in\,H$ the family $\{\mathcal{L}(X^{t, x}_\epsilon)\}_{\e \in\,(0,1)}$ satisfies a large deviation principle in the space $C([0,t];H)$ (with speed $\e$) with respect to a suitable action functional $I_{t, x}$ that we will describe explicitly.

In order to state our result, we have to introduce some notations.
First, we introduce the unperturbed  problem
\begin{equation}
\label{sbm5}
Z^\prime(s)=AZ(s)+b(Z(s)),\ \ \ \ Z(0)=y \in\,H.
\end{equation} 
Since we are assuming that $b:H\to H$ is Lipschitz continuous,  for every $T>0$ and $y \in\,H$ there exists a unique $Z^y \in\,C([0,T];H)$ such that
\[Z^y(s)=e^{sA}y+\int_0^s e^{(s-r)A}b(Z^y(r))\,dr.\]
Next, for every $x \in\,H$, $t>0$ and $\varphi \in\,L^2(0,t;H)$ we introduce the controlled problem
\begin{equation}
\label{sbm2}
X^\prime(s)=A	X(s)+b(X(s))+\sigma(X(s),g(Z^{X(s)}(t-s)))\varphi(s),\ \ \ \ X(0)=x.	\end{equation}
In Section \ref{sec8} we will see that under the same assumptions of Theorem \ref{teo2}, equation \eqref{sbm2} admits a unique mild solution $X^{t,x}_\varphi \in\,C([0,t];H)$.
This will allow to state the last main result of this paper.

\begin{Theorem}
\label{teo3}
In addition to the  conditions assumed in Theorem \ref{teo2}, suppose that $g:H\to\mathbb{R}$ is Lipschitz-continuous. Then, for every fixed $t>0$ and $x \in\,H$ the family $\{\mathcal{L}(X^{t, x}_\epsilon)\}_{\e \in\,(0,1)}$ satisfies a large deviation principle in the space $C([0,t];H)$, with speed $\e$, with respect to the action functional
\begin{equation}
\label{sbm1}
I_{t,x}(X)=\frac 12\,\inf\left\{\int_0^t\Vert \varphi(s)\Vert_H^2\,ds\,:\, X(s)=	X^{t,x}_\varphi(s),\ s \in\,[0,t]\right\},
\end{equation}
where $X^{t,x}_\varphi$ is the  unique mild solution of  problem
\eqref{sbm2}.
	
\end{Theorem}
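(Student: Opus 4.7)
\textbf{Proof Plan for Theorem \ref{teo3}.} The plan is to establish the Laplace principle (equivalent to the LDP in this Polish setting) via the weak-convergence method of Budhiraja--Dupuis--Maroulas. Recall that, in this framework, since $W$ is a cylindrical Wiener process and the speed is $\e$, it suffices to verify two conditions. First, a \emph{compactness} condition: the level sets
\[K_M:=\{X_\varphi^{t,x}\,:\,\varphi \in\,L^2(0,t;H),\ \|\varphi\|^2_{L^2(0,t;H)}\leq M\}\]
are compact in $C([0,t];H)$ for every $M>0$. Second, a \emph{convergence} condition: whenever $\{\varphi_\e\}_{\e \in (0,1)}$ is a family of predictable processes with $\int_0^t\|\varphi_\e(s)\|_H^2\,ds\leq M$ a.s.\ and $\varphi_\e\to\varphi$ in distribution as $S_M$-valued random variables (with the weak $L^2$ topology on $S_M$), then
\[X^{t,x}_{\varphi_\e,\e}\ \longrightarrow\ X^{t,x}_\varphi\quad \text{in distribution in } C([0,t];H),\]
where $X^{t,x}_{\varphi_\e,\e}$ is the mild solution of \eqref{stoch-pde} provided by Theorem \ref{teo2}, and $X^{t,x}_\varphi$ solves \eqref{sbm2}.

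The compactness of $K_M$ follows from standard arguments. First I would show, using Hypothesis \ref{H1}--\ref{H4} and Gronwall, that any $X=X^{t,x}_\varphi$ with $\|\varphi\|^2_{L^2}\leq M$ is uniformly bounded in $C([0,t];H)$; the well-posedness of \eqref{sbm2} itself is obtained by a contraction argument, using the regularity of $g$ and of $Z^\cdot$ together with the Lipschitz character of $\sigma$. Then, using the factorization formula for the stochastic convolution (though here there is no stochastic part, only a drift term involving $\varphi$) and the compactness of $e^{sA}$ on bounded time intervals implied by the parabolic character encoded in Hypothesis \ref{H2}, one obtains equicontinuity and relative compactness in $C([0,t];H)$. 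Continuity of the map $\varphi\mapsto X^{t,x}_\varphi$ from $(S_M,\text{weak})$ into $C([0,t];H)$ follows from the Lipschitz dependence of the coefficients on $x$ and from the standard fact that, for a uniformly Lipschitz map $\Sigma$ along a convergent sequence of processes, $\int_0^\cdot \Sigma(X_n(s))\varphi_n(s)\,ds$ converges in $C([0,t];H)$ when $\varphi_n\rightharpoonup \varphi$.

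The convergence condition is the core of the proof, and is where the quasi-linear structure enters. By the Yamada--Watanabe argument for controlled SPDEs, it suffices to prove that the laws of $X^{t,x}_{\varphi_\e,\e}$ are tight in $C([0,t];H)$ and that every weak limit point coincides in law with $X^{t,x}_\varphi$. Tightness follows from uniform moment bounds on $X^{t,x}_{\varphi_\e,\e}$ (using Hypothesis \ref{H3} and the constraint $\|\varphi_\e\|^2\leq M$) together with a factorization argument for the stochastic convolution $\int_0^s e^{(s-r)A}\sigma(X^{t,x}_{\varphi_\e,\e}(r),u_\e(t-r,X^{t,x}_{\varphi_\e,\e}(r)))\,dW_r$, whose $L^2_{\mathcal L_2}$-integrability on $[0,t]$ comes from Hypothesis \ref{H3}. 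To identify the limit, the key ingredient is the pointwise convergence
\[u_\e(\tau,y)\ \longrightarrow\ g(Z^y(\tau)),\qquad \e\downarrow 0,\]
uniformly on bounded subsets of $[0,t]\times H$. This is the formal $\e=0$ limit of the mild equation \eqref{intro8}, since $R^\e_\tau g(y)\to g(e^{\tau A}y)$ and the $\e/2$-Trace term vanishes, with $u_\e\to u_0$ solving the first-order transport equation whose characteristics are precisely the flow $Z^\cdot$. I would prove this convergence quantitatively using the a priori bound in Theorem \ref{teo1} (to control $\e\Vert \mathrm{Tr}[\delta F(u_\e)D_x^2u_\e]\Vert$ in the mild formulation) and the contractive fixed-point structure that produced $u_\e$. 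Combined with the Lipschitz continuity of $\sigma(\cdot,\cdot)$ and of $g$, this upgrades to convergence of $\sigma(X^{t,x}_{\varphi_\e,\e}(s), u_\e(t-s,X^{t,x}_{\varphi_\e,\e}(s)))$ toward $\sigma(X(s), g(Z^{X(s)}(t-s)))$ along any subsequential weak limit $X$, so that the Skorokhod representation and passing to the limit in the mild equation \eqref{35} (with $\sqrt{\e}\,dW$ term vanishing in probability by It\^o isometry since $\|\sigma(X^{t,x}_{\varphi_\e,\e}, u_\e)\|_{\mathcal L_2}$ is uniformly integrable by Hypothesis \ref{H3}) identifies $X$ as $X^{t,x}_\varphi$.

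The main obstacle will be the uniform-in-$\e$ control of $u_\e$ and its convergence to $g\circ Z^\cdot$: Theorem \ref{teo1} gives estimates with constants $c_\e$ that may blow up as $\e\downarrow 0$, so that direct use of its norm bounds is not enough. I expect to have to combine the mild representation of $u_\e$ with the stochastic representation $u_\e(t,x)=\mathbb{E}g(X^{t,x}_\e(t))$ (a consequence of the maximum principle derived from \eqref{stoch-pde.intro}) and the \emph{uncontrolled} LDP / law-of-large-numbers $X^{t,x}_\e(t)\to Z^x(t)$ in probability, which yields $u_\e(t,x)\to g(Z^x(t))$ by dominated convergence, uniformly on bounded sets by the uniform Lipschitz continuity in $x$. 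Once this PDE-level convergence is secured with rates or at least locally uniformly, plugging it into the stochastic equation \eqref{35} and closing the weak convergence argument is routine, and both BDM conditions yield the LDP with rate functional \eqref{sbm1}.
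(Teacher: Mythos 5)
Your plan follows the same overall strategy as the paper: the Budhiraja--Dupuis--Maroulas weak convergence formulation, with the two conditions (continuity of the skeleton map and compactness of level sets), and crucially the recognition that the heart of the matter is the $\e\downarrow 0$ limit of $u_\e$, which you correctly see should be handled via the probabilistic representation $u_\e(s,x)=\mathbb{E}\,g(X^{s,x}_\e(s))$ plus the law-of-large-numbers $X^{s,x}_\e(s)\to Z^x(s)$ rather than via the $\e$-dependent analytic norm bounds of Theorem~\ref{teo1}. This is exactly Lemma~\ref{lem2} in the paper, which upgrades the idea to a clean quantitative estimate $|u_\e(s,x)-g(Z^x(s))|\le c_t\sqrt{\e}(1+\Vert x\Vert_H)$ by Lipschitzness of $g$, the uniform moment bound on the uncontrolled solution, and Gronwall.

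The one place where your route is genuinely different is in closing the convergence condition: you propose tightness of the laws of $X^{t,x}_{\varphi_\e,\e}$ (via moment bounds, factorization, and the smoothing of $e^{sA}$), then identification of limit points by Skorokhod representation and passage to the limit in the mild equation. The paper instead applies Skorokhod's theorem directly to the controls $\varphi_\e$ and then proves the much stronger statement $\sup_{s\in[0,t]}\mathbb{E}\Vert X^{t,x}_{\varphi_\e,\e}(s)-X^{t,x}_\varphi(s)\Vert_H^2\to 0$ by a single Gronwall estimate, splitting the difference into a drift term, a diffusion-coefficient term (where Lemma~\ref{lem2} and the Lipschitz flow estimate \eqref{sbm42} enter), a weak-convergence term involving $\varphi_\e-\varphi$ handled by dominated convergence, and the vanishing $\sqrt{\e}$ stochastic integral. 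That direct $L^2$ argument avoids any tightness or subsequential extraction and also gives Condition C2 for free (continuity of $\varphi\mapsto X^{t,x}_\varphi$ from $L^2_w$ plus compactness of balls in $L^2_w$), which you arrive at by a separate, slightly heavier route. Your version would still work, but the Gronwall-in-expectation route is more economical and sidesteps the need to invoke compactness of the semigroup.
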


\section{The well-posedness of the stochastic PDE \eqref{stoch-pde}}
\label{sec-spde}

In this section we will assume that, for some $T>0$, $\eta>1/2$, $\vartheta \in\,(0,(\eta-1/2)\wedge 1)$, $\varrho <1/4$, and $\e \in\,(0,1)$ there exists a mild solution $u_\e \in\,C_{\e,\varrho, \eta}((0,T];C^{2+\vartheta}_b(H))$ for equation \eqref{quasi-linear-bis}. We will show how this allows to prove Theorem \ref{teo2} for every $t \in\,(0,T]$.

We  fix $t \in\,(0,T]$ and  a predictable process $\varphi \in\,L^2(\Omega;L^2(0,t;H))$ satisfying \eqref{sbm10} and we consider the stochastic equation
\begin{equation}
\label{stoch-pde-bis}
\left\{\begin{array}{rl}
\ds{dX(s)=} & \ds{\left[A	X(s)+b(X(s))+\sigma(X(s),u_\e(t-s,X(s)))\varphi(s)\right]\,ds}\\[14pt]
& \ds{+\sqrt{\e}\,\sigma(X(s),u_\e(t-s,X(s)))\,dW_s,}\\[10pt]
\ds{X(0)=} & \ds{x.}
	\end{array}\right.
\end{equation}
For every $\e \in\,(0,1)$, $s \in\,[0,t]$ and $x \in\,H$, we define
\begin{equation}
\label{sigma}	
\Sigma_{t,\e}(s,x):=\sigma(x,u_\e(t-s,x)).
\end{equation}
A process $X \in\,L^2(\Omega;C([0,t];H))$ is a mild solution of equation \eqref{stoch-pde-bis} if it is a fixed point of the mapping $\Lambda_{t, \e}$ defined by
\[\begin{array}{ll}	
\ds{\Lambda_{t,\e}(X)(s):=} & \ds{e^{sA}x+\int_0^s e^{(s-r)A}b(X(r))\,ds+\int_0^s e^{(s-r)A}\Sigma_{t,\e}(r,X(r))\,\varphi(r)\,dr}\\[14pt]
&\ds{+\sqrt{\e}\int_0^s e^{(s-r)A}\Sigma_{t,\e}(r,X(r))\,dW(r).}
\end{array}
\]
According to Hypothesis \ref{H3}, for every $\tau>0$, $s \in\,[0,t]$  and $x, y \in\,H$ we have
\[\Vert e^{\tau A} \left(\Sigma_{t,\e}(s,x)-\Sigma_{t,\e}(s,y)\right)\Vert_{\mathcal{L}_2(H)}\leq c\,(\tau\wedge 1)^{-\frac 14}\left(\Vert x-y\Vert_H+|u_\e(t-s,x)-u_\e(t-s,y)|\right).\]
Since $u_\e \in\,C_{\e,\varrho, \eta}((0,T];C^{2+\vartheta}_b(H))$, we have
\[\begin{array}{ll}
\ds{|u_\e(t-s,x)-u_\e(t-s,y)|}  &  \ds{\leq \Vert D_x u_\e(t-s,\cdot)\Vert_0\,\Vert x-y\Vert_H}\\[14pt]
&\ds{\leq \e^{-\varrho}\,((t-s)\wedge 1)^{-\varrho}\Vert u_\e\Vert_{\e, \varrho, \eta, \vartheta, T}\Vert x-y\Vert_H,}	\end{array}\]
so that
\begin{equation}
\label{101}
\begin{array}{l}
\ds{	\Vert e^{\tau A} \left(\Sigma_{t,\e}(s,x)-\Sigma_{t,\e}(s,y)\right)\Vert_{\mathcal{L}_2(H)}\leq c\,(\tau\wedge 1)^{-\frac 14}\left(1+\e^{-\varrho}((t-s)\wedge 1)^{-\varrho}\Vert u_\e\Vert_{\e, \varrho, \eta, \vartheta, T}\right)\Vert x-y\Vert_H.}	
\end{array}
\end{equation}

Now, for every $\beta\geq 0$ we denote by $\mathcal{K}_{\beta, t}(H)$ the 
Banach space of all $H$-valued predictable processes $X$ such that
\[\Vert X\Vert_{\mathcal{K}_{\beta,t}(H)}^2:=\sup_{s \in\,[0,t]}e^{-\beta s}\,\mathbb{E}\, \Vert X(s)\Vert^2_H<\infty.\]
If $X_1, X_2 \in\,\mathcal{K}_{\beta,t}(H)$, in view of \eqref{101} we have
\[\begin{array}{l}
\ds{\mathbb{E}\left\Vert \int_0^s e^{(s-r)A}\left[\Sigma_{t,\e}(r,X_1(r))-\Sigma_{t,\e}(r,X_2(r))\right]\,\varphi(r)\,dr\right\Vert_H^2}\\[18pt]
\ds{\leq \mathbb{E}\int_0^s\left\Vert  e^{(s-r)A}\left[\Sigma_{t,\e}(r,X_1(r))-\Sigma_{t,\e}(r,X_2(r))\right]\right\Vert_{\mathcal{L}_2(H)}^2\,dr\int_0^s\Vert\varphi(r)\Vert_H^2\,dr}\\[18pt]
\ds{\leq c M\,	\mathbb{E}\int_0^s ((s-r)\wedge 1)^{-\frac 12}\left(1+\e^{-2\varrho}((t-r)\wedge 1)^{-2\varrho}\Vert u_\e\Vert^2_{\e, \varrho, \eta, \vartheta, T}\right)\,\Vert X_1(r)-X_2(r)\Vert_H^2\,dr}\\[18pt]
\ds{\leq c M\,\Vert X_1-X_2\Vert^2_{\mathcal{K}_{\beta,t}(H)}\int_0^s ((s-r)\wedge 1)^{-\frac 12}\left(1+e^{-2\varrho}((t-r)\wedge 1)^{-2\varrho}\Vert u_\e\Vert^2_{\e, \varrho, \eta, \vartheta, T}\right)e^{\beta r}\,dr.}
\end{array}\]
Since we are assuming that $\varrho<1/4$, for every $s \in\,[0,t]$
\[\int_0^s ((s-r)\wedge 1)^{-\frac 12}\left(1+e^{-2\varrho}((t-r)\wedge 1)^{-2\varrho}\Vert u_\e\Vert^2_{\e, \varrho, \eta, \vartheta, T}\right)e^{-\beta(s-r)}\,dr\leq c_{\e,\beta, t}(s),\]
for some continuous increasing function $c_{\e,\beta, t}:[0,t]\to [0,+\infty)$ such that 
\[\lim_{\beta\to\infty} \sup_{s \in\,[0,t]}c_{\e,\beta, t}(s)=0.\]
Therefore, we pick $\beta_1=\beta_1(\e,t)>0$ such that 
\[\sup_{s \in\,[0,t]}c_{\e,\beta_1, t}(s)\leq \frac 16,\]
we have
\[\begin{array}{l}
\ds{\sup_{s \in\,[0,t]}e^{-\beta_1 s}\,\mathbb{E}\left\Vert \int_0^s e^{(s-r)A}\left[\Sigma_{t,\e}(r,X_1(r))-\Sigma_{t,\e}(r,X_2(r))\right]\,\varphi(r)\,dr\right\Vert_H^2}\\[18pt]
\ds{\leq \frac 16\,\Vert X_1-X_2\Vert^2_{\mathcal{K}_{\beta_1,t}(H)}.}	\end{array}
\]
Moreover, we have
\[\begin{array}{l}
\ds{\mathbb{E}\left\Vert \int_0^s e^{(s-r)A}\left[\Sigma_{t,\e}(r,X_1(r))-\Sigma_{t,\e}(r,X_2(r))\right]\,dW(r)\right\Vert_H^2}\\[18pt]
\ds{\leq c	\int_0^s ((s-r)\wedge 1)^{-\frac 12}\left(1+\e^{-2\varrho}((t-r)\wedge 1)^{-2\varrho}\Vert u_\e\Vert^2_{\e, \varrho, \eta, \vartheta, T}\right)\,\mathbb{E}\Vert X_1(r)-X_2(r)\Vert_H^2\,dr}\\[18pt]
\ds{\leq c\,\Vert X_1-X_2\Vert^2_{\mathcal{K}_{\beta,t}(H)}\int_0^s ((s-r)\wedge 1)^{-\frac 12}\left(1+\e^{-2\varrho}((t-r)\wedge 1)^{-2\varrho}\Vert u_\e\Vert^2_{\e, \varrho, \eta, \vartheta, T}\right)e^{\beta r}\,dr.}
\end{array}\]
Then, by proceeding as above
\[\begin{array}{l}
\ds{\sup_{s \in\,[0,t]}e^{-\beta_1 s}\,\mathbb{E}\left\Vert \int_0^s e^{(s-r)A}\left[\Sigma_{t,\e}(r,X_1(r))-\Sigma_{t,\e}(r,X_2(r))\right]\,dW(r)\right\Vert_H^2}\\[18pt]
\ds{\leq \frac 16\,\Vert X_1-X_2\Vert^2_{\mathcal{K}_{\beta_1,t}(H)}.}	\end{array}\]
 Finally, due to the Lipschitz-continuity of  $b$, we have that there exists $\beta_2>0$ such that
 that 
 \[\sup_{s \in\,[0,t]}e^{-\beta_2 s}\mathbb{E}\left \Vert \int_0^s e^{(s-r)A}\left[b(X_1(r))-b(X_2(r))\right]\,dr\right\Vert_H^2\leq \frac 16\,\Vert X_1-X_2\Vert^2_{\mathcal{K}_{\beta_2,t}(H)}.\]
Therefore, is we take $\bar{\beta}:=\beta_1\vee \beta_2$ , we have that $\Lambda_{t,\e}$ is a contraction in $\mathcal{K}_{\bar{\beta},t}(H)$ and its fixed point is the unique mild solution  $X^{t,x}_{\varphi,\e}$ of equation \eqref{stoch-pde-bis}. 

Finally, by using a stochastic factorization argument, it is possible to prove that $X_\varphi^{t,x}$ belongs to $L^2(\Omega;C([0,t];H))$ (for all details about stochastic factorization see \cite[Subsection 5.3.1]{DPZ}).

\section{Local existence of mild solutions for   the quasi-linear problem}
\label{sec4}

In this section we will prove  that the quasi-linear problem \eqref{quasi-linear-bis} admits a local mild solution, for every $\e \in\,(0,1)$.

\medskip

In view of \eqref{34} and \eqref{33}, problem \eqref{quasi-linear-bis} can be rewritten as
\begin{equation}
\label{quasi-linear-tris}
\left\{\begin{array}{l}
\ds{D_tu_\e(t,x)=\mathcal{L}_\e u_\e(t,x)+\frac {\e} 2 \text{Tr} \left[\delta\, F(u_\e)(t,x)D^2_x	 u_\e(t,x)\right]+\langle b(x),D u_\e(t,x)\rangle_H,}\\[10pt]
\ds{u_\e(0,x)=g(x),\ \ \ x \in\,H,}
	\end{array}\right.
\end{equation}
where $\mathcal{L}_\e$ is the linear Kolmogorov operator
\[\mathcal{L}_\e\varphi(x)=\frac \e2 \text{Tr}\left[QD_x^2\varphi(x)\right]+\langle Ax, D_x \varphi(x)\rangle_H.\]

As we have recalled in Subsection \ref{ssec2.3}, for every $\varphi \in\,B_b(H)$ the unique classical solution of the linear problem
\[D_tv_\e(t,x)=\mathcal{L}_\e v_\e(t,x),\ \ \ \ v_\e(0,x)=\varphi(x),\] is given by the Ornstein-Uhlenbeck semigroup
\[v_\e(t,x)=R_t^\e\varphi(x)=\int_H \varphi(e^{tA}x+y)\mathcal{N}_{\e Q_t}(dy).\]

Before proceeding with the study of equation \eqref{quasi-linear-tris}, we show how, in view of Hypothesis \ref{H2}, the properties of the Ornstein-Uhlenbeck semigroup described in Subsection \ref{ssec2.3} apply to the semigroup $R^\e_t$.

Thanks to \eqref{67} and \eqref{20}, inequality \eqref{60} gives for every $n \in\,\mathbb{N}\cup \{0\}$ and $\a \in\,(0,1)$
\begin{equation}
\label{main1}
\Vert D^n R^\e_t\varphi\Vert_\a\leq c_{n,\a}\,\e^{-\frac n2}(t\wedge 1)^{-\frac n2} e^{-\la n t}\Vert\varphi\Vert_{t,\a},\ \ \ \ t>0,\ \ \e \in\,(0,1),
\end{equation}
where
\[\Vert\varphi\Vert_{t,\a}:=\left(\Vert \varphi\Vert_0+e^{-\omega \a t}[\varphi]_\a\right).\]
In the same way, inequality \eqref{65} gives for every $n \in\,\mathbb{N}$ and $0\leq \a\leq \beta\leq 1$
\begin{equation}
\label{maineps}
\Vert D^n R^\e_t\varphi\Vert_\a\leq c_{n,\a,\beta}\,\e^{-\frac {n-(\beta-\a)}2}(t\wedge 1)^{-\frac {n-(\beta-\a)}2} e^{-\la n t}\Vert\varphi\Vert_{t,\beta},\ \ \ \ t>0,\ \ \e \in\,(0,1).
	\end{equation}
Finally, since
\[\Vert R^\e_t\varphi\Vert_0\leq \Vert \varphi\Vert_0,\ \ \ \ [R^\e_t\varphi]_\beta \leq e^{-\omega \beta t}[\varphi]_\beta,\ \ \ \ \e>0,\]
and
\[[R^\e_t\varphi]_{\a}\leq \Vert D R^\e_t\varphi\Vert_0^{\frac{\a-\beta}{1-\beta}}[R^\e_t\varphi]_\beta^{\frac{1-\a}{1-\beta}},\]
thanks to \eqref{main1} and \eqref{maineps} we get
\begin{equation}
\label{main3}
[R^\e_t\varphi]_\a\leq c_{\a,\beta}\, \e^{-\frac{\a-\beta}2}(t\wedge 1)^{-\frac{\a-\beta}2} e^{-\omega_{\a,\beta}t}	\Vert\varphi\Vert_{t,\beta},\ \ \ \ t>0,\ \ \e \in\,(0,1),
\end{equation}
for some $\omega_{\a,\beta}>0$, and
\begin{equation}
\label{main2}
\Vert R^\e_t \varphi\Vert_\a\leq c_{\a,\beta}\, \e^{-\frac{\a-\beta}2}(t\wedge 1)^{-\frac{\a-\beta}2} 	\Vert\varphi\Vert_{t,\beta},\ \ \ \ t>0,\ \ \e \in\,(0,1).	
\end{equation}

\medskip

Now, we introduce the notion of mild solution for equation \eqref{quasi-linear-bis}.

\begin{Definition}
A function $u_\e \in\,C(	[0,+\infty);H)$ such that $u_\e(t,\cdot) \in\,C^2_b(H)$, for every $t>0$, is a mild solution for problem \eqref{quasi-linear-bis} if for every $(t,x) \in\,[0,+\infty)\times H$
\[u_\e(t,x)=R^\e_tg(x)+\int_0^t R^\e_{t-s}\left(\frac \e 2\text{\em {Tr}}\left[\delta F(u_\e(s,\cdot))D^2_x	 u_\e(s,\cdot)\right]+\langle b(\cdot),D u_\e(s,\cdot)\rangle_H\right)(x)\,ds.\]
\end{Definition}

For every $R>0$ we define
\[\mathcal{Y}^{\e,R}_{\varrho, \eta, \vartheta, T}:=\left\{\,u \in\,C_{\e,\varrho, \eta}((0,T];C^{2+\vartheta}_b(H))\ :\ \Vert u\Vert_{\e, \varrho, \eta, \vartheta, T}\leq R\right\},\]
and for every $v \in\,\mathcal{Y}^{\e,R}_{ \varrho, \eta, \vartheta, T}$ and $\delta>0$ we define 
\[\Gamma_{\e,\delta}(v)(t,x):=	\int_0^t R^\e_{t-s}\gamma_{\e,\delta}(v,s)(x)\,ds,\ \ \ \ \ t \in\,[0,T],\ \ \ x \in\,H,\]
where
\[\gamma_{\e,\delta}(v,s)(x):=\frac \e2\text{Tr}\left[\delta F(v(s,\cdot))(x)D^2_x	 v(s,x)\right]+\langle b(x),D v(s,x)\rangle_H.\]
In particular, $u_\e$ is a mild solution for problem \eqref{quasi-linear-bis}
 if and only if 
 \[u_\e(t,x)=R^\e_tg(x)+\Gamma_{\e, \delta}(u_\e)(t,x).\]
First, we investigate the dependence of $\gamma_{\e,\delta}$ on $v \in\,\mathcal{Y}^{\e,R}_{\varrho, \eta, \vartheta, T}$.

\begin{Lemma} 
\label{lemma5.1}
For every $v \in\,\mathcal{Y}^{\e,R}_{\varrho, \eta, \vartheta, T}$ and $\delta>0$
\begin{equation}
\label{36}
\Vert \gamma_{\e,\delta}(v,s)\Vert_\vartheta\leq c\,\e^{\frac 12-\varrho}\,\delta R\left(1+R\right)(s\wedge 1)^{-(\varrho+\frac 12)}+c\,\e^{-\varrho}R(s\wedge 1)^{-\varrho},\ \ \ \ \ s \in\,(0,T].	
\end{equation}

Moreover, for every $v_1, v_2 \in\,\mathcal{Y}^{\e,R}_{\varrho, \eta, \vartheta, T}$ and $\delta>0$
\begin{equation}
\label{85}
\begin{array}{l}
\ds{\Vert \gamma_{\e,\delta}(v_1,s)-\gamma_{\e, \delta}(v_2,s)\Vert_\vartheta\leq c\,\,\e^{\frac 12-\varrho}\delta\,R(1+R)\,(s\wedge 1)^{-(\varrho+\frac 12)}\Vert v_1(s,\cdot)-v_2(s,\cdot)\Vert_\vartheta\,}\\[18pt]
\ds{+c\,\e\,\delta(1+R)\,\Vert D^2_x v_1(s,\cdot)-D^2_xv_2(s,\cdot)\Vert_\vartheta+c\,\Vert D_xv_1(s,\cdot)-D_xv_2(s,\cdot)\Vert_\vartheta.}
\end{array}
	\end{equation}

\end{Lemma}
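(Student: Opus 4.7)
The plan is to split $\gamma_{\e,\delta}(v,s) = T_1(v,s) + T_2(v,s)$ into the trace piece
\[
T_1(v,s)(x) := \frac{\e\delta}{2}\,\mathrm{Tr}\bigl[F(v(s,\cdot))(x)\,D^2_x v(s,x)\bigr],
\]
and the drift piece $T_2(v,s)(x) := \langle b(x), D_x v(s,x)\rangle_H$, then estimate each summand in the $C^\vartheta_b(H)$-norm. The algebraic workhorse for $T_1$ is the product rule
\[
\|\mathrm{Tr}(G\cdot H)\|_\vartheta \le c\,\|G\|_{C^\vartheta_b(H;\mathcal{L}_1(H))}\,\|H\|_{C^\vartheta_b(H;\mathcal{L}(H))},
\]
which follows by combining $|\mathrm{Tr}(AB)|\le \|A\|_{\mathcal{L}_1(H)}\|B\|_{\mathcal{L}(H)}$ with the pointwise identity $G(x)H(x) - G(y)H(y) = (G(x)-G(y))H(x) + G(y)(H(x)-H(y))$ and the submultiplicativity of the trace-norm recalled in Section \ref{sec2}.

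For \eqref{36}, I would apply this product rule with $G = F(v(s,\cdot))$ and $H = D^2_x v(s,\cdot)$. The factor $\|F(v(s,\cdot))\|_\vartheta \le c(1+R)$ comes from \eqref{25}, after noting that $\|v(s,\cdot)\|_\vartheta \le c\,\|v(s,\cdot)\|_\eta \le cR$ (a one-line consequence of \eqref{1} since $\vartheta<\eta$). The factor $\|D^2_x v(s,\cdot)\|_\vartheta \le \e^{-(\varrho+1/2)}(s\wedge 1)^{-(\varrho+1/2)}R$ comes straight from the definition of $\|v\|_{\e,\varrho,\eta,\vartheta,T}\le R$. Multiplying and including the prefactor $\e\delta/2$ produces the first summand in \eqref{36}. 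For $T_2$, Hypothesis \ref{H4} gives $\|b\|_\vartheta < \infty$ (Lipschitz plus bounded), so $\|T_2(v,s)\|_\vartheta \le c\,\|D_x v(s,\cdot)\|_\vartheta \le c\,\e^{-\varrho}(s\wedge 1)^{-\varrho}R$, which is the second summand.

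For \eqref{85}, I would expand
\[
F(v_1)D^2 v_1 - F(v_2)D^2 v_2 = \bigl(F(v_1)-F(v_2)\bigr)D^2 v_1 + F(v_2)\bigl(D^2 v_1 - D^2 v_2\bigr)
\]
and apply the same trace product estimate to each piece. The first piece uses \eqref{28} to bound $\|F(v_1(s,\cdot))-F(v_2(s,\cdot))\|_\vartheta \le c(1+R)\|v_1(s,\cdot)-v_2(s,\cdot)\|_\vartheta$, multiplied by $\|D^2_x v_1(s,\cdot)\|_\vartheta \le \e^{-(\varrho+1/2)}(s\wedge 1)^{-(\varrho+1/2)}R$, giving the first term of \eqref{85}. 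The second piece uses \eqref{25} to obtain $\|F(v_2(s,\cdot))\|_\vartheta \le c(1+R)$, producing the second term. Finally, the difference of drift terms is handled directly by $\|\langle b, D_x v_1 - D_x v_2\rangle_H\|_\vartheta \le c\,\|D_x v_1(s,\cdot) - D_x v_2(s,\cdot)\|_\vartheta$, yielding the last summand.

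There is no genuine obstacle here: the lemma is careful bookkeeping of the norms defining $\mathcal{Y}^{\e,R}_{\varrho,\eta,\vartheta,T}$ against the assumptions on $F$ and $b$ already catalogued in Section \ref{sec3}. The only minor subtlety is the distinction between the $\eta$-index (at which $v$ itself is controlled) and the $\vartheta$-index at which both $F(v)$ and $D_xv$, $D^2_x v$ are estimated; the embedding $C^\eta_b(H)\hookrightarrow C^\vartheta_b(H)$ bridges the two with no cost beyond a multiplicative constant.
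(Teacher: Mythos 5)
Your proof is correct and follows essentially the same route as the paper: split $\gamma_{\e,\delta}$ into the trace and drift pieces, bound the trace piece via $|\mathrm{Tr}(AB)|\le\|A\|_{\mathcal{L}_1(H)}\|B\|_{\mathcal{L}(H)}$ together with the $C^\vartheta$-product rule, bound $F$ using \eqref{25}--\eqref{28}, and read off the powers of $\e$ and $(s\wedge1)$ from the definition of $\|\cdot\|_{\e,\varrho,\eta,\vartheta,T}$, with the embedding $C^\eta_b(H)\hookrightarrow C^\vartheta_b(H)$ bridging the two H\"older indices exactly as you note. (Minor aside: the paper's proof cites Hypothesis~\ref{H3} for the drift bound where Hypothesis~\ref{H4} is clearly intended; you reference the correct hypothesis.)
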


\begin{proof}
In view of  \eqref{25} and Hypothesis \ref{H3}, we have
\[\begin{array}{ll}
\ds{\Vert \gamma_{\e,\delta}(v,s)\Vert_\vartheta} & \ds{\leq c\,\e\,\delta\,\Vert F(v(s,\cdot))\Vert_\vartheta \,\Vert D_x^2v(s,\cdot)\Vert_\vartheta +\Vert b\Vert_\vartheta\,\Vert D_xv(s,\cdot)\Vert_\vartheta}\\[10pt]
& \ds{\leq c\,\e\,\delta\left(1+\Vert v(s,\cdot)\Vert_\vartheta\right)\Vert D^2_xv(s,\cdot)\Vert_\vartheta+c\,\Vert D_xv(s,\cdot)\Vert_\vartheta,}	
\end{array}
\]
and since we are assuming that $v \in\,\mathcal{Y}^{\e,R}_{\varrho, \eta, \vartheta, T}$, this implies \eqref{36}.

Next, if $v_1, v_2 \in\,\mathcal{Y}^{\e,R}_{ \varrho, \eta, \vartheta, T}$ and $\delta>0$ we have
\[\begin{array}{l}
\ds{\Vert \gamma_{\e,\delta}(v_1,s)-\gamma_{\e,\delta}(v_2,s)\Vert_\vartheta\leq c\,\e\,\delta\,\Vert F(v_1(s,\cdot))-F(v_1(s,\cdot))\Vert_\vartheta \,\Vert D_x^2v_1(s,\cdot)\Vert_\vartheta}\\[18pt]
\ds{+c\,\e\,\delta\,\Vert F(v_2(s,\cdot))\Vert_\vartheta \,\Vert D_x^2v_1(s,\cdot)-D_x^2v_2(s,\cdot)\Vert_\vartheta	+\Vert b\Vert_\vartheta\,\Vert D_xv_1(s,\cdot)-D_xv_2(s,\cdot)\Vert_\vartheta.}
\end{array}\]
Thus, according to \eqref{25} and \eqref{28}, 
\[\begin{array}{l}
\ds{\Vert \gamma_{\e,\delta}(v_1,s)-\gamma_{\e,\delta}(v_2,s)\Vert_\vartheta}\\[14pt]
\ds{\leq c\,\e\,\delta\,\Vert v_1(s,\cdot)-v_2(s,\cdot)\Vert_\vartheta\left(1+\Vert v_1(s,\cdot)\Vert_\vartheta+\Vert v_2(s,\cdot)\Vert_\vartheta\right)\Vert D^2_x v_1(s,\cdot)\Vert_\vartheta}	\\[18pt]
\ds{+c\,\e\,\delta \left(1+\Vert v_2(s,\cdot)\Vert_\vartheta\right)\Vert D^2_x v_1(s,\cdot)-D^2_xv_2(s,\cdot)\Vert_\vartheta+c\,\Vert D_xv_1(s,\cdot)-D_xv_2(s,\cdot)\Vert_\vartheta.}
\end{array}\]
Recalling that $v_1, v_2 \in\,\mathcal{Y}^{\e,R}_{\varrho, \eta, \vartheta, T}$, this implies \eqref{85}.

\end{proof}

\begin{Remark}
{\em If for every fixed $\e, \delta>0$ we define
\[\alpha_{\e,\delta}(R,s):=	\e^{\frac 12-\varrho}\delta R(1+R)(s\wedge 1)^{-(\varrho+\frac 12)}+\e^{-\varrho}R(s\wedge 1)^{-\varrho},\ \ \ \ \ s>0,\ \ \ R>0,\]
and
\[\mathfrak{a}_{\e,\delta}(R,s):=	\e^{\frac 12-\varrho}\delta (1+R)^2(s\wedge 1)^{-(\varrho+\frac 12)}+\e^{-\varrho}(s\wedge 1)^{-\varrho},\ \ \ \ \ s>0,\ \ \ R>0,\]
due \eqref{36} and \eqref{85} we have that for every $v, v_1, v_2 \in\,\mathcal{Y}^{\e,R}_{ \varrho, \eta, \vartheta, T}$ and $s \in\,(0,T]$
\begin{equation}
	\label{90}
	\Vert \gamma_{\e,\delta}(v,s)\Vert_\vartheta\leq c\,\alpha_{\e,\delta}(R,s),
\end{equation}
and
\begin{equation}
\label{91}
\Vert \gamma_{\e,\delta}(v_1,s)-\gamma_{\e,\delta}(v_2,s)\Vert_\vartheta\leq c\,\mathfrak{a}_{\e,\delta}(R,s)\,\Vert v_1-v_2\Vert_{\e, \varrho, \eta, \vartheta, T}.
	\end{equation}
	Notice that for all $\beta<1$ and $\mu>0$ and for all $t\geq 0$
	\[\begin{array}{l}
\ds{\int_0^t((t-s)\wedge 1)^{-\beta} e^{-\mu(t-s)}\alpha_{\e,\delta}(R,s)\,ds	}\\[14pt]
 \ds{\leq \e^{\frac 12-\varrho}\delta R(1+R)\int_0^t((t-s)\wedge 1)^{-\beta} e^{-\mu(t-s)}(s\wedge 1)^{-(\varrho+\frac 12)}\,ds}\\[14pt]
 \ds{+\e^{-\varrho}R\int_0^t((t-s)\wedge 1)^{-\beta} e^{-\mu(t-s)}(s\wedge 1)^{-\varrho}\,ds,}
\end{array}\]
and this implies that there exists some constant $c>0$ only dependent on $\beta$ and $\mu$ such that
\begin{equation}
\label{93}
\int_0^t((t-s)\wedge 1)^{-\beta} e^{-\mu(t-s)}\alpha_{\e,\delta}(R,s)\,ds	 \leq c\,(t\wedge 1)^{\frac 12-(\varrho+\beta)}\e^{\frac 12-\varrho}\lambda_{\e,\delta}(T,t),	
\end{equation}
where
\[\lambda_{\e,\delta}(R,t):=\delta R(1+R)\,+\e^{\frac 12}R\,(t\wedge 1)^{\frac 12}.\]
In an analogous way
\begin{equation}
\label{93-bis}
\int_0^t(t-s)^{-\beta} e^{-\mu(t-s)}\mathfrak{a}_{\e,\delta}(R,s)\,ds	 \leq c\,(t\wedge 1)^{\frac 12-(\varrho+\beta)}\e^{\frac 12-\varrho}\mathfrak{l}_{\e,\delta}(R,t),\end{equation}
where
\[\mathfrak{l}_{\e,\delta}(R,t):=\delta (1+R)^2\,+\e^{\frac 12}(t\wedge 1)^{\frac 12}.\]

}
\end{Remark}

Next we prove the following estimates for $\Gamma_{\e,\delta}$ on $\mathcal{Y}^{\e,R}_{ \varrho, \eta, \vartheta, T}$.

\begin{Lemma}
\label{lemma4.4}
For every $v \in\,\mathcal{Y}^{\e,R}_{\e, \varrho, \eta, \vartheta, T}$ and $\e, \delta \in\,(0,1)$ it holds
\begin{equation}
\label{74}
\Vert \Gamma_{\e,\delta}(v)\Vert_{\e, \varrho, \eta, \vartheta, T}\leq c\,\lambda_{\e,\delta}(R,T)\left[\e^{\frac{1-(\eta-\vartheta)}2-\varrho}\,(T\wedge 1)^{\frac{1-(\eta-\vartheta)}2-\varrho}\,(T\vee 1)+1\right].	
\end{equation}
\end{Lemma}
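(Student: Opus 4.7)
The proof consists in estimating separately the three ingredients of the weighted norm $\|\Gamma_{\e,\delta}(v)\|_{\e,\varrho,\eta,\vartheta,T}$ — namely $\|\Gamma_{\e,\delta}(v)(t,\cdot)\|_\eta$, $\e^\varrho(t\wedge 1)^\varrho\,\|D_x\Gamma_{\e,\delta}(v)(t,\cdot)\|_\vartheta$, and $\e^{\varrho+1/2}(t\wedge 1)^{\varrho+1/2}\,\|D^2_x\Gamma_{\e,\delta}(v)(t,\cdot)\|_\vartheta$ — uniformly in $t\in(0,T]$. In each case I would bring the spatial derivatives inside the integral $\Gamma_{\e,\delta}(v)(t,x)=\int_0^t R^\e_{t-s}\gamma_{\e,\delta}(v,s)(x)\,ds$, apply the appropriate smoothing estimate \eqref{main1}--\eqref{main3} to bound $\|D^nR^\e_{t-s}\gamma_{\e,\delta}(v,s)\|_{\cdot}$, substitute the pointwise-in-time bound $\|\gamma_{\e,\delta}(v,s)\|_\vartheta\leq c\,\alpha_{\e,\delta}(R,s)$ from \eqref{90}, and finally integrate in $s$ by means of the convolution estimate \eqref{93}. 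Multiplying each contribution by the corresponding weight and collecting the $\e$-, $(t\wedge 1)$- and $R$-factors produces the right-hand side of \eqref{74}.

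For $\|\Gamma_{\e,\delta}(v)(t,\cdot)\|_\eta$ I split $\|\cdot\|_\eta=\|\cdot\|_0+[\cdot]_\eta$. The contribution to the uniform norm uses only that $R^\e_{t-s}$ is a contraction on $C_b(H)$, without exponential decay, so the integral of $\alpha_{\e,\delta}(R,s)$ over $[0,t]$ grows at most linearly in $T$: this is the origin of the $T\vee 1$ factor in \eqref{74}. The Hölder-seminorm contribution uses \eqref{main3} with $\a=\eta,\beta=\vartheta$ (smoothing factor $\e^{\varrho-1/2}((t-s)\wedge 1)^{\varrho-1/2}$), integrated against $\alpha_{\e,\delta}(R,s)$ through \eqref{93} with $\beta=1/2-\varrho$. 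For $\e^\varrho(t\wedge 1)^\varrho\,\|D_x\Gamma_{\e,\delta}(v)(t,\cdot)\|_\vartheta$, I apply \eqref{maineps} with $n=1,\a=\beta=\vartheta$ (smoothing factor $\e^{-1/2}((t-s)\wedge 1)^{-1/2}$) and use \eqref{93} with $\beta=1/2$; the resulting singular factor $(t\wedge 1)^{-\varrho}$ is absorbed into the prefactor $(t\wedge 1)^\varrho$.

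For $\e^{\varrho+1/2}(t\wedge 1)^{\varrho+1/2}\,\|D^2_x\Gamma_{\e,\delta}(v)(t,\cdot)\|_\vartheta$ the naive choice $\beta=\vartheta$ in \eqref{maineps} would produce the borderline non-integrable exponent $-1$ and the convolution inequality \eqref{93} fails at $\beta=1$. To circumvent this, I would exploit the gap $\eta-\vartheta=1-2\varrho>0$ built into $\mathcal{Y}^{\e,R}_{\e,\varrho,\eta,\vartheta,T}$: using the semigroup decomposition $R^\e_{t-s}=R^\e_{(t-s)/2}\circ R^\e_{(t-s)/2}$, first invoke \eqref{main3} on the inner copy to upgrade $R^\e_{(t-s)/2}\gamma_{\e,\delta}(v,s)$ from $C^\vartheta$-regularity to $C^\eta$-regularity (at the cost of $((t-s)\wedge 1)^{-(1/2-\varrho)}$), and then apply \eqref{maineps} with $n=2,\a=\vartheta,\beta=\eta$ to the outer copy, whose exponent $-(\varrho+1/2)$ is now strictly greater than $-1$ and therefore integrable against $\alpha_{\e,\delta}(R,s)$ via \eqref{93} with $\beta=\varrho+1/2$. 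Combined with the weight $\e^{\varrho+1/2}(t\wedge 1)^{\varrho+1/2}$, this contribution collects to $c\,\lambda_{\e,\delta}(R,T)$.

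The main obstacle is precisely this $D^2$-estimate: transferring $\gamma_{\e,\delta}(v,s)$ from $C^\vartheta$ to $C^{2+\vartheta}$ through $R^\e_{t-s}$ in a single step costs exactly $(t-s)^{-1}$, i.e., it is borderline non-integrable, so there is no margin of error. The whole point of the parabolic smoothing argument is to spread the two units of regularity gain over two half-steps of the semigroup, using the uniform-in-$t$ regularity $\eta>\vartheta$ carried by $v$ to produce two exponents $-(1/2-\varrho)$ and $-(1/2+\varrho)$ — each strictly better than $-1$ — whose separate contributions are integrable via \eqref{93}. Once this is in place, the remaining work is bookkeeping of the $\e$-, $(t\wedge 1)$- and $\lambda_{\e,\delta}(R,T)$-factors, leading directly to \eqref{74}.
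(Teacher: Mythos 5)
Your outline for Steps 1 and 2 (the $\|\cdot\|_\eta$-norm and the weighted $\|D_x(\cdot)\|_\vartheta$-norm of $\Gamma_{\e,\delta}(v)$) is consistent with what the paper does, and the bookkeeping of the $(T\vee 1)$ factor in Step 1 is right. Step 3, however, has a genuine gap, and it is precisely the part you yourself flag as ``the main obstacle.''

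The semigroup splitting $R^\e_{t-s}=R^\e_{(t-s)/2}\circ R^\e_{(t-s)/2}$ does not break the borderline exponent. You claim that after the inner half-step, which upgrades $\gamma_{\e,\delta}(v,s)$ from $C^\vartheta$ to $C^\eta$ at cost $((t-s)\wedge 1)^{-(1/2-\varrho)}$, the outer half-step produces a \emph{net} exponent $-(\varrho+\tfrac12)>-1$ integrable via \eqref{93}. But the factor $((t-s)\wedge 1)^{-(1/2-\varrho)}$ from the inner step does not disappear: both halves see the same time increment $t-s$, so the two costs combine into $((t-s)\wedge 1)^{-(1/2-\varrho)-(1/2+\varrho)} = ((t-s)\wedge 1)^{-1}$, which is exactly the borderline singularity you started with. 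This is structural: $C^\vartheta\to C^{2+\vartheta}$ costs two units of regularity no matter how you slice $R^\e_{t-s}$ into sub-intervals, so a pure parabolic-smoothing argument on the time integral cannot close this estimate.

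The paper instead uses a Schauder-type increment-splitting argument. First it estimates $\|D^2\Gamma_{\e,\delta}(v)(t)\|_0$ via \eqref{maineps} with $\a=0$, $\beta=\vartheta$; the exponent $-(1-\vartheta/2)$ is integrable and leaves a spare factor $(t\wedge 1)^{\vartheta/2}$ (see \eqref{82}). Then for $[D^2\Gamma_{\e,\delta}(v)(t)]_\vartheta$ it distinguishes whether $\|h\|_H^2$ is larger or smaller than $\e\,t/2$. In the first case the sup-norm estimate together with its spare $(t\wedge 1)^{\vartheta/2}$ factor is converted directly into $\|h\|_H^\vartheta$. In the second case the time integral is split at $\e^{-1}\|h\|_H^2$: on $[0,\e^{-1}\|h\|_H^2]$ one uses the $C^\vartheta\to C^2$ bound (exponent $-1+\vartheta/2$) over a short interval, and on $[\e^{-1}\|h\|_H^2,t]$ one goes one order higher, estimating $\|R^\e_s\gamma\|_3$ (exponent $-(3-\vartheta)/2$) and paying a factor $\|h\|_H$ from the Lipschitz bound. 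Each piece yields the factor $\|h\|_H^\vartheta$ after optimizing the split point. This device — working at the level of the finite increment $h$ and splitting the convolution at a time scale calibrated to $\|h\|_H$ — is what actually defeats the critical $(t-s)^{-1}$ singularity. Without it, the fixed-point scheme cannot close the $C^{2+\vartheta}$ estimate, so you would need to import this (or an equivalent abstract maximal-regularity result) to complete the proof.
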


\begin{proof}
{\em Step 1.} We have
\begin{equation}
\label{70}
\Vert \Gamma_{\e,\delta}(v)(t)\Vert_{\eta}\leq 	c\,\e^{\frac{1-(\eta-\vartheta)}2-\varrho}\lambda_{\e,\delta}(R,t)\,(t\wedge 1)^{\frac{1-(\eta-\vartheta)}2-\varrho}(t\vee 1),\ \ \ \ \ t \in\,[0,T].
\end{equation}

{\em Proof of Step 1.} In view of \eqref{main2},  for every $t \in\,[0,T]$
\[\begin{array}{ll}
\ds{	
\Vert \Gamma_{\e,\delta}(v)(t)\Vert_\eta} & \ds{\leq \int_0^t \Vert R^\e_{t-s}\gamma_{\e,\delta}(v,s)\Vert_{\eta}\,ds\leq c\,\e^{-\frac{\eta-\vartheta}2}\int_0^t ((t-s)\wedge 1)^{-\frac{\eta-\vartheta}2}\Vert \gamma_{\e,\delta}(v,s)\Vert_\vartheta\,ds}\\[14pt]
&\ds{\leq c\,\e^{-\frac{\eta-\vartheta}2}\int_0^t((t-s)\wedge 1)^{-\frac{\eta-\vartheta}2}\alpha_{\e,\delta}(R,s)\,ds.}\end{array}
\]
Then, by adapting \eqref{93} to the case $\mu=0$, we get
\[\Vert \Gamma_{\e,\delta}(v)(t)\Vert_\eta\leq  c\,\e^{-\frac{\eta-\vartheta}2}\e^{\frac 12-\varrho}\lambda_{\e,\delta}(R,t)(t\wedge 1)^{\frac{1-(\eta-\vartheta)}2-\varrho}\,(t\vee 1),\]
and \eqref{70} follows.

\medskip

{\em Step 2.} We have
\begin{equation}
\label{71}
(t\wedge 1)^{\varrho} \,\Vert D\Gamma_{\e,\delta}(v)(t)\Vert_\vartheta\leq c\,\e^{-\varrho}\left[\delta\,R(1+R)+\e^{-\frac 12}R (t\wedge 1)^{\frac 12}\right].
\end{equation}

{\em Proof of Step 2.} According to \eqref{main1}, we have that
\[\begin{array}{l}
\ds{\int_0^t \Vert DR^\e_{t-s}\gamma_{\e,\delta}(v,s)\Vert_{\vartheta}\,ds\leq c\e^{-\frac 12}\int_0^t((t-s)\wedge 1)^{-\frac 12}e^{-\la(t-s)}\Vert \gamma_{\e,\delta}(v,s)\Vert_\vartheta\,ds.}
\end{array}\]
Then, thanks to \eqref{90} and \eqref{93} we conclude
\[\begin{array}{l}
\ds{\int_0^t \Vert DR^\e_{t-s}\Gamma_{\e,\delta}(v,s)\Vert_{\vartheta}\,ds\leq c\,\e^{-\frac 12}(t\wedge 1)^{-\varrho}\e^{\frac 12-\varrho}\lambda_{\e,\delta}(R,t),}
\end{array}\]
 and \eqref{71} follows.

\medskip

{\em Step 3.}
We have
\begin{equation}
\label{81}
(t\wedge 1)^{\varrho+\frac 12} \,\Vert D^2\Gamma_{\e,\delta}(v)(t)\Vert_\vartheta\leq c\,\e^{-(\frac 12+\varrho)}\lambda_{\e,\delta}(T,t).
	\end{equation}

{\em Proof of Step 3.} By proceeding as in the proof of Step 2, we have
\[\begin{array}{ll}
\ds{\Vert D^2\Gamma_{\e,\delta}(v)(t)\Vert_0}  &   \ds{\leq c\e^{-1+\frac\vartheta 2}\int_0^t((t-s)\wedge 1)^{-1+\frac \vartheta 2}e^{-\la (t-s)}\alpha_{\e,\delta}(R,s)\,ds}	\\[18pt]
&  \ds{\leq c\,\e^{-1+\frac\vartheta 2}\e^{\frac 12-\varrho}(t\wedge 1)^{-(\varrho+\frac 12)+\frac \vartheta{2}}\lambda_{\e,\delta}(R,t),}
\end{array}\]
and this implies 
\begin{equation}
\label{82}
(t\wedge 1)^{\varrho+\frac 12} \,\Vert D^2\Gamma_{\e,\delta}(v)(t)\Vert_0\leq c\,\e^{-(\frac 12+\varrho)+\frac\vartheta 2}\lambda_{\e,\delta}(R,t)(t\wedge 1)^{\frac \vartheta 2}.	
\end{equation}

Now, for every $x, h \in\,H$ and $t \in\,[0,T]$, we have
\[\begin{array}{l}
\ds{	(t\wedge 1)^{\varrho+\frac 12} \Vert D^2\Gamma_{\e,\delta}(v)(t,x+h)-D^2\Gamma_{\e,\delta}(v)(t,x)\Vert_{\mathcal{L}(H)}\leq c\,\e^{-(\frac 12+\varrho)+\frac\vartheta 2}\lambda_{\e,\delta}(R,t)\,(t\wedge 1)^{\frac \vartheta 2}.}
\end{array}
\]
Hence, if we assume that $\Vert h\Vert_H^2>\e\, t/2$
 we get
\begin{equation}
\label{46}	
\begin{array}{l}
\ds{(t\wedge 1)^{\varrho+\frac 12}\,\Vert D^2\Gamma_{\e,\delta}(v)(t,x+h)-D^2\Gamma_{\e,\delta}(v)(t,x)\Vert_{\mathcal{L}(H)}\leq c\,\e^{-(\frac 12+\varrho)}\lambda_{\e,\delta}(R,t)\,\Vert h\Vert_H^\vartheta.}	
\end{array}
\end{equation}
When $\Vert h\Vert_H^2\leq \e\,t/2$ we have
\[\begin{array}{ll}
\ds{D^2\Gamma_{\e,\delta}(v)	(t,x)}  &  \ds{=\int_0^{\e^{-1}\Vert h\Vert_H^2}D^2 R^\e_s \gamma_{\e,\delta}(v,t-s)(x)\,ds+\int_{\e^{-1}\Vert h\Vert_H^2}^t D^2 R^\e_s \gamma_{\e,\delta}(v,t-s)(x)\,ds}\\[18pt]
&\ds{=:a_{\e,\delta}(h,t,x)+b_{\e,\delta}(h,t,x).}
\end{array}
\]
	Due to \eqref{maineps},  from \eqref{90} we have
\begin{equation}
\label{45}
\begin{array}{l}
\ds{|a_{\e,\delta}(h,t,x+h)-a_{\e,\delta}(h,t,x)|\leq c\,\e^{-1+\frac\vartheta 2}\int_0^{\e^{-1}\Vert h\Vert_H^2}(s\wedge 1)^{-1+\frac \vartheta 2}\Vert \gamma_{\e,\delta}(v,t-s)\Vert_\vartheta\,ds\,}\\[18pt]
\ds{\leq c\,\e^{-1+\frac\vartheta 2}\int_0^{\e^{-1}\Vert h\Vert_H^2}s^{-1+\frac \vartheta 2}e^{-\la s}\alpha_{\e,\delta}(R,s)\,ds\leq c\,\e^{-(\frac 12+\varrho)}\lambda_{\e,\delta}(R,t)\,(t\wedge 1)^{-(\varrho+\frac 12)}\,\Vert h\Vert_H^\vartheta.}
\end{array}
\end{equation}
As for $b_{\e,\delta}(h, t,\cdot)$, we have
\[\begin{array}{l}
\ds{b_{\e,\delta}(h,t,x+h)-b_{\e,\delta}(h,t,x)=\int_{\e^{-1}\Vert h\Vert_H^2}^t \left[D^2 R^\e_s \gamma_{\e,\delta}(v,t-s)(x+h)-D^2 R^\e_s \gamma_{\e,\delta}(v,t-s)(x)\right]\,ds.}	
\end{array}\]
Hence, due again to \eqref{maineps}, we have
\[\begin{array}{l}
\ds{\Vert b_{\e,\delta}(h,t,x+h)-b_{\e,\delta}(h,t,x)\Vert_{\mathcal{L}(H)}}\\[10pt]
\ds{\leq\int_{\e^{-1}\Vert h\Vert_H^2}^t \Vert D^2 R^\e_s \gamma_{\e,\delta}(v,t-s)(x+h)-D^2 R^\e_s \gamma_{\e,\delta}(v,t-s)(x)\Vert_{\mathcal{L}(H)}\,ds}\\[18pt]
\ds{\leq c\,	\int_{\e^{-1}\Vert h\Vert_H^2}^t\Vert R^\e_s \gamma_{\e,\delta}(v,t-s)\Vert_{3}\,ds\Vert h\Vert_H}\\[18pt]
\ds{\leq c\,\e^{-\frac{3-\vartheta}2}\e^{\frac 12-\varrho}\int_{\e^{-1}\Vert h\Vert_H^2}^t (s\wedge 1)^{-\frac{3-\vartheta}2}e^{-2\la s}((t-s)\wedge 1)^{-(\varrho+\frac 12)}\lambda_{\e,\delta}(R,t-s)\,ds.}
\end{array}\]
Since we assuming $\Vert h\Vert_H^2\leq \e\,t/2$, we have
\[\begin{array}{l}
\ds{\int_{\e^{-1}\Vert h\Vert_H^2}^t (s\wedge 1)^{-\frac{3-\vartheta}2}e^{-2\la s}((t-s)\wedge 1)^{-(\varrho+\frac 12)})\,ds}\\[18pt]
\ds{=\int_{\e^{-1}\Vert h\Vert_H^2}^{t/2} (s\wedge 1)^{-\frac{3-\vartheta}2}e^{-2\la s}((t-s)\wedge 1)^{-(\varrho+\frac 12)}\,ds+\int_{t/2}^t s^{-\frac{3-\vartheta}2}e^{-2\la s}((t-s)\wedge 1)^{-(\varrho+\frac 12)}\,ds}\\[18pt]
\ds{\leq c\,t^{-(\varrho+\frac 12)}\,\Vert h\Vert_H^{-1+\vartheta}+c\,t^{-\frac{3-\vartheta}2}\,t^{\frac12-\varrho}=c\,t^{-\varrho}\left(\e^{\frac{1-\vartheta}2}\Vert h\Vert_H^{-1+\vartheta}+t^{-\frac{1-\vartheta}2}\right)}\\[14pt]
\ds{\leq c\,\e^{\frac{1-\vartheta}2}(t\wedge 1)^{-(\varrho+\frac 12) }\,\Vert h\Vert_H^{-1+\vartheta}.}	
\end{array}\]
Moreover, in the same way we have
\[\int_{\e^{-1}\Vert h\Vert_H^2}^t s^{-\frac{3-\vartheta}2}e^{-2\la s}((t-s)\wedge 1)^{-\varrho}\,ds\leq c\,\e^{\frac{1-\vartheta}2}(t\wedge 1)^{-\varrho }\,\Vert h\Vert_H^{-1+\vartheta},\]
so that
\[(t\wedge 1)^{\varrho+\frac 12}\,\Vert b_{\e,\delta}(h,t,x+h)-b_{\e,\delta}(h,t,x)\Vert_{\mathcal{L}(H)}\leq c\,\e^{-(\frac 12+\varrho)}\lambda_{\e,\delta}(R,t)\Vert h\Vert_H^\vartheta.\]
This, together with \eqref{45} and \eqref{46}, implies that for every $h \in\,H$
\[\begin{array}{l}
\ds{(t\wedge 1)^{\varrho+\frac 12} \Vert D^2 \Gamma_{\e,\delta}(v)(t,x+h)-D^2\Gamma_{\e,\delta}(v)(t,x)\Vert_{\mathcal{L}(H)}\leq c\,\e^{-(\frac 12+\varrho)}\lambda_{\e,\delta}(R,t)\,\Vert h\Vert_H^\vartheta.}	
\end{array}
\]
Thus, thanks to \eqref{82}, we obtain \eqref{81}.

\medskip

{\em Conclusion.} Estimate \eqref{74} is a consequence of \eqref{70}, \eqref{71} and \eqref{81}.
\end{proof}

\begin{Remark}
\label{remark5.5}
{\em From the proof 	of the previous lemma, we easily see that for every $t \in\,(0,T]$ and $\e \in\,(0,1)$
\begin{equation}
\label{75}
\begin{array}{l}
\ds{\e^{\frac{\eta-\vartheta}2}[\Gamma_{\e,\delta}(v)(t)]_\eta+\e^{\varrho}(t\wedge 1)^\varrho\Vert D\Gamma_{\e,\delta}(v)(t)\Vert_\vartheta+\e^{\frac 12+\varrho}(t\wedge 1)^{\varrho+\frac 12}\Vert D^2\Gamma_{\e,\delta}(v)(t)\Vert_\vartheta}\\[18pt]
\ds{\leq c\,\lambda_{\e,\delta}(R,t)\left((t\wedge 1)^{\frac {1-(\eta-\vartheta)}2-\varrho}\e^{\frac {1-(\eta-\vartheta)}2-\varrho}+1\right),}
\end{array}
	\end{equation}
for some constant $c>0$ independent of $T>0$. Actually, 
in view of \eqref{main3} and \eqref{90}, we have
\[\begin{array}{ll}
\ds{[\Gamma_{\e,\delta}(v)(t)]_\eta}  &  \ds{\leq c\,\e^{-\frac{\eta-\vartheta}2}\int_0^t e^{-\omega_{\vartheta,\eta} (t-s)}((t-s)\wedge 1)^{-\frac{\eta-\vartheta}2}[\gamma_{\e,\delta}(v,s)]_\vartheta\,ds}\\[14pt]
&\ds{\leq c\,\e^{-\frac{\eta-\vartheta}2}\int_0^t e^{-\omega_{\vartheta,\eta} (t-s)}((t-s)\wedge 1)^{-\frac{\eta-\vartheta}2}\alpha_{\e,\delta}(R,s)\,ds}\\[18pt]
& \ds{\leq 	c\,\lambda_{\e,\delta}(R,t)\left(\e^{\frac {1-(\eta-\vartheta)}2-\varrho}(t\wedge 1)^{\frac {1-(\eta-\vartheta)}2-\varrho}+1\right).}	
\end{array}
\]
This, together with \eqref{71} and \eqref{81},  implies \eqref{75}.}
\end{Remark}

\medskip

Now we are ready to prove the existence of a local mild solution.

\begin{Theorem}
\label{lemma1}
Fix $\eta>1/2$ and $\vartheta \in\,(0,(\eta-1/2)\wedge 1)$ and define
\begin{equation}
\label{rho}
\varrho:=\frac{1-(\eta-\vartheta)}2.	\end{equation}
Then, there exist $\delta_1, T_1>0$ such that for every $\e \in\,(0,1)$ problem \eqref{quasi-linear-bis} has a mild solution $u_\e$ in $C_{\e,\varrho, \eta}((0,T_1];C^{2+\vartheta}_b(H))$, for every $\delta\leq \delta_1$.
\end{Theorem}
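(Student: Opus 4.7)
The plan is to obtain $u_\e$ as the fixed point of the map
\[
\Phi_{\e,\delta}(v)(t,x):=R^\e_t g(x)+\Gamma_{\e,\delta}(v)(t,x)
\]
on a closed ball $\mathcal{Y}^{\e,R}_{\varrho,\eta,\vartheta,T_1}$ of the Banach space $C_{\e,\varrho,\eta}((0,T_1];C^{2+\vartheta}_b(H))$, by means of the Banach contraction principle. The first step is to bound the free term $R^\e_\cdot g$. Applying \eqref{main1}–\eqref{maineps} with $\a=\vartheta$, $\beta=\eta$ to the derivatives of order $n=1,2$, together with $\|R^\e_t g\|_\eta\le \|g\|_\eta$, and exploiting the fact that the specific choice $\varrho=(1-(\eta-\vartheta))/2$ makes $\varrho$ coincide with the exponent $(1-(\eta-\vartheta))/2$ in \eqref{maineps} for $n=1$ and $\varrho+1/2$ with the corresponding exponent $(2-(\eta-\vartheta))/2$ for $n=2$, one gets
\[
\|R^\e_\cdot g\|_{\e,\varrho,\eta,\vartheta,T}\le C_0\,\|g\|_\eta,
\]
with $C_0$ independent of $\e\in(0,1)$ and of $T>0$. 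This already places the linear part in the space, and it also identifies the natural radius $R:=2C_0\|g\|_\eta$ for the ball.

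The second step is to verify that $\Phi_{\e,\delta}$ leaves $\mathcal{Y}^{\e,R}_{\varrho,\eta,\vartheta,T_1}$ invariant. Here Lemma \ref{lemma4.4} is used in the form \eqref{74}: with the specific $\varrho$, the bracketed factor $\e^{(1-(\eta-\vartheta))/2-\varrho}(T\wedge 1)^{(1-(\eta-\vartheta))/2-\varrho}(T\vee 1)+1$ reduces to $(T\vee 1)+1$, so that restricting to $T_1\le 1$ gives
\[
\|\Gamma_{\e,\delta}(v)\|_{\e,\varrho,\eta,\vartheta,T_1}\le c\,\lambda_{\e,\delta}(R,T_1)\le c\bigl[\delta R(1+R)+R\,T_1^{1/2}\bigr],
\]
uniformly in $\e\in(0,1)$, because $\e^{1/2}\le 1$. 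Hence
\[
\|\Phi_{\e,\delta}(v)\|_{\e,\varrho,\eta,\vartheta,T_1}\le \tfrac{R}{2}+c\,R\bigl[\delta(1+R)+T_1^{1/2}\bigr],
\]
and it suffices to choose $T_1$ and $\delta_1$ so small that $c[\delta_1(1+R)+T_1^{1/2}]\le 1/2$ (note that since $R$ is a function of $\|g\|_\eta$, the threshold $\delta_1$ may depend on $\|g\|_\eta$, but not on $\e$).

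For the third step, namely the contraction property, the estimate \eqref{93-bis} together with \eqref{91} (and the argument of Lemma \ref{lemma4.4} carried out on differences, as indicated in Remark \ref{remark5.5}) yields
\[
\|\Gamma_{\e,\delta}(v_1)-\Gamma_{\e,\delta}(v_2)\|_{\e,\varrho,\eta,\vartheta,T_1}\le c'\bigl[\delta(1+R)^2+T_1^{1/2}\bigr]\,\|v_1-v_2\|_{\e,\varrho,\eta,\vartheta,T_1},
\]
again uniformly in $\e\in(0,1)$. Shrinking $T_1$ and $\delta_1$ further, if necessary, so that $c'[\delta_1(1+R)^2+T_1^{1/2}]\le 1/2$, the map $\Phi_{\e,\delta}$ becomes a $1/2$-contraction on $\mathcal{Y}^{\e,R}_{\varrho,\eta,\vartheta,T_1}$; Banach's theorem provides the unique fixed point $u_\e$, which is the desired local mild solution.

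The main obstacle, and the reason the specific value of $\varrho$ in \eqref{rho} is forced, is the precise bookkeeping of powers of $\e$: the smoothing estimates \eqref{main1}–\eqref{maineps} produce factors $\e^{-n/2+(\beta-\a)/2}$, while the weighted norm $\|\cdot\|_{\e,\varrho,\eta,\vartheta,T}$ absorbs exactly $\e^\varrho$ for first derivatives and $\e^{\varrho+1/2}$ for second derivatives. Choosing $\varrho=(1-(\eta-\vartheta))/2$ is precisely what makes these two balances hold simultaneously, so that the $\e$-dependence in the nonlinear bound collapses into the harmless factor $\lambda_{\e,\delta}(R,T_1)$, and the thresholds $T_1,\delta_1$ can be chosen independent of $\e\in(0,1)$.
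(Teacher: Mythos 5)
Your proposal is correct and follows essentially the same route as the paper: a Banach fixed point argument for $\Gamma^g_{\e,\delta}(v)=R^\e_\cdot g+\Gamma_{\e,\delta}(v)$ on a ball of $C_{\e,\varrho,\eta}((0,T_1];C^{2+\vartheta}_b(H))$, using \eqref{71-bis} for the free term, Lemma \ref{lemma4.4} (via \eqref{74}) for invariance, and the difference estimates \eqref{91}, \eqref{93-bis} plus the Step-3 argument of Lemma \ref{lemma4.4} for the contraction. In fact your observation that uniformity in $\e$ comes from $\e^{1/2}\le 1$ in $\lambda_{\e,\delta}$ and $\mathfrak{l}_{\e,\delta}$ is the correct reading: it also silently repairs a recurring sign misprint in the paper's displays (in \eqref{71}, \eqref{81}, and in the proof of Theorem \ref{lemma1}), where $\e^{-1/2}$ appears but should be $\e^{+1/2}$, as the paper's own definition of $\lambda_{\e,\delta}(R,t)$ and the derivation in Step 2 of Lemma \ref{lemma4.4} make clear; read with $\e^{-1/2}$ the paper's choice of $T_1$ would spuriously depend on $\e$, contradicting the statement of the theorem.
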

\begin{proof}
A function $u_\e$ is a mild solution of equation \eqref{quasi-linear-bis} if and only if it is a fixed point for the mapping $\Gamma_{\e,\delta}^g$ defined by
\[\Gamma_{\e,\delta}^g(v)(t)=R^\e_t g+\Gamma_{\e,\delta}(v)(t)      ,\ \ \ \ t \in\,[0,T].\]
Thus, we will prove the existence of a local mild solution for equation \eqref{quasi-linear-bis} by showing that there exist some $T_1, R>0$ and $\delta_1>0$ such that $\Gamma_{\e,\delta}^g$ maps $\mathcal{Y}^{\e,R}_{\varrho, \vartheta, T_1}$ into itself as a contraction, for every $\delta\leq \delta_1$. 

Thanks to \eqref{main2}  we have
\begin{equation}
	\label{83}
	\Vert R^\e_t g\Vert_\eta\leq c\,\Vert g\Vert_\eta.
\end{equation}
Moreover, thanks  to \eqref{maineps}
\begin{equation}
\label{84}
\begin{array}{l}
\ds{\Vert DR^\e_tg(v)(t)\Vert_\vartheta\leq c\,\e^{-\frac{1-(\eta-\vartheta)}2}(t\wedge 1)^{-\frac{1-(\eta-\vartheta)}2}\,e^{-\lambda t}\,\Vert g\Vert_\eta,}\\[10pt]
\ds{\Vert D^2R^\e_tg(v)(t)\Vert	_\vartheta\leq 	c\,\e^{-(1-\frac{\eta-\vartheta}2)}(t\wedge 1)^{-(1-\frac{\eta-\vartheta}2)}\,e^{-2\lambda t}\,\Vert g\Vert_\eta.}	
\end{array}
\end{equation}
Therefore, if we define $\varrho$ as in \eqref{rho}, we have that $\varrho \in\,(0,1/4)$ and from \eqref{83} and \eqref{84} it follows
\begin{equation}
\label{71-bis}
\Vert R^\e_t g\Vert_\eta+\e^{\varrho}(t\wedge 1)^{\varrho} \Vert DR^\e_tg(t)\Vert_\vartheta+\e^{\frac 12+\varrho}(t\wedge 1)^{\varrho+\frac 12}\Vert D^2R^\e_tg(t)\Vert	_\vartheta\leq c\,\Vert g\Vert_\eta.
\end{equation}
With $\varrho$ defined as in \eqref{rho}, together with \eqref{74} this implies that 
for every $v \in\,\mathcal{Y}^{\e,R}_{\varrho, \eta, \vartheta, T}$
\[\Vert \Gamma_{\e, \delta}^g(v)\Vert_{\e, \varrho, \eta, \vartheta, T}\leq c\,\Vert g\Vert_\eta+c\,\left[\delta\,R(1+R)+\e^{-\frac 12}R\,(T\wedge 1)^{\frac 12}\right](T\vee 1).\]
In particular, if we first take $R:=3 c\,\Vert g\Vert_\eta$ and $\delta^\prime>0$ small enough such that
\[c\,\delta^\prime\,R(1+R)\leq \frac R3, \]
and then fix $T^\prime\leq 1$ small enough so that
\[\e^{-\frac 12}R(T^\prime\wedge 1)^{\frac 12}\leq \frac R3,\]
we conclude that for every $\delta\leq \delta^\prime$ and $T\leq T^\prime$
\[\Vert \Gamma_{\e,\delta}^g(v)\Vert_{\e, \varrho, \eta, \vartheta, T}\leq R,\]
so that $\Gamma_{\e,\delta}^g$ maps $\mathcal{Y}^{\e,R}_{\varrho, \eta, \vartheta, T}$ into itself. 

Now, if we fix $v_1, v_2 \in\,\mathcal{Y}^{\e,R}_{\varrho, \eta, \vartheta, T}$,  we have 
\[\begin{array}{l}
\ds{\Vert	\Gamma_{\e,\delta}^g(v_1)-\Gamma_{\e,\delta}^g(v_2)\Vert_{\e, \varrho, \eta, \vartheta, T}\leq c\int_0^T (t-s)^{-\frac{\eta-\vartheta}2}\Vert \gamma_{\e,\delta}(v_1,s)-\gamma_{\e,\delta}(v_2,s)\Vert_\vartheta\,ds}\\[18pt]
\ds{+\e^\varrho\sup_{t \in\,(0,T]}(t\wedge 1)^\varrho\int_0^t\e^{-\frac 12}((t-s)\wedge 1)^{-\frac 12}e^{-\la(t-s)}\Vert \gamma_{\e,\delta}(v_1,s)-\gamma_{\e,\delta}(v_2,s)\Vert_\vartheta\,ds}\\[18pt]
\ds{+\e^{\frac 12+\varrho}\sup_{t \in\,(0,T]}(t\wedge 1)^{\varrho+\frac 12}\int_0^t\e^{-(1-\frac\vartheta2)}(t-s)^{-(1-\frac \vartheta2)}e^{-2\lambda (t-s)}\Vert \gamma_{\e,\delta}(v_1,s)-\gamma_{\e,\delta}(v_2,s)\Vert_\vartheta\,ds}\\
[18pt]
\ds{+\e^{\frac 12+\varrho}\sup_{t \in\,(0,T]}(t\wedge 1)^{\varrho+\frac 12}\,[D^2\Gamma_{\e,\delta}^g(v_1)(t)-D^2\Gamma_{\e,\delta}^g(v_2)(t)]_\vartheta=:\sum_{i=1}^4I_{\delta,i}(\e).}
\end{array}\]
Then, according to  \eqref{91} and \eqref{93}, we have
\begin{equation}
\label{86-bis}
\begin{array}{l}
\ds{I_{\delta, 1}(\e)\leq c\,\mathfrak{l}_{\e,\delta}(R,T)\,\Vert v_1-v_2\Vert_{\e, \varrho, \eta, \vartheta, T}.}
\end{array}	
\end{equation}
In the same way,
\begin{equation}
\label{87}	
\begin{array}{ll}
\ds{I_{\delta,2}(\e)}  &  \ds{\leq c\,\e^{\varrho}\sup_{t \in\,(0,T]}(t\wedge 1)^{\varrho}\int_0^t\e^{-\frac 12}(t-s)^{-\frac 12}e^{-\lambda(t-s)}\mathfrak{a}_{\e,\delta}(R,s)\,ds\,\Vert v_1-v_2\Vert_{\e, \varrho, \eta, \vartheta, T}}	\\[18pt]
& \ds{\leq c\,\mathfrak{l}_{\e,\delta}(R,T)\Vert v_1-v_2\Vert_{\e, \varrho, \eta, \vartheta, T}.}
\end{array}\end{equation}
and 
\begin{equation}
\label{86}	
\begin{array}{l}
\ds{I_{\delta,3}(\e)}\\[14pt]
\ds{\leq c\,\e^{\frac 12+\varrho}\sup_{t \in\,(0,T]}(t\wedge 1)^{\varrho+\frac 12}\int_0^t\e^{-(1-\frac \vartheta2)}((t-s)\wedge 1)^{-(1-\frac \vartheta2)}e^{-2\lambda(t-s)}\mathfrak{a}_{\e,\delta}(R,s)\,ds\,\Vert v_1-v_2\Vert_{\e, \varrho, \eta, \vartheta, T}}	\\[18pt]
\ds{\leq c\,\e^{\frac\vartheta 2}(T\wedge 1)^{\frac \vartheta2}\,\mathfrak{l}_{\e,\delta}(R,T)\,\Vert v_1-v_2\Vert_{\e, \varrho, \eta, \vartheta, T}.}
\end{array}\end{equation}
As for $I_{\delta, 4}(\e)$, due to \eqref{86},  if we fix any $x, h \in\,H$ and assume  $\Vert h\Vert_H^2>\e\, t/2$  we have
 \begin{equation}
 \label{96}
 \begin{array}{l}
\ds{\e^{\varrho+\frac 12}(t\wedge 1)^{\varrho+\frac 12}\,\Vert D^2\Gamma_{\e,\delta}^g(v_1)(t,x+h)-D^2\Gamma_{\e,\delta}^g(v_2)(t,x)\Vert_{\mathcal{L}(H)}}\\[18pt]
\ds{\leq c\,\e^{\frac \vartheta2}(t\wedge 1)^{\frac \vartheta2}\mathfrak{l}_{\e,\delta}(R,t)\Vert v_1-v_2\Vert_{\e, \varrho, \eta, \vartheta, T}\leq c\, \mathfrak{l}_{\e,\delta}(R,t)\,\Vert v_1-v_2\Vert_{\e, \varrho, \eta, \vartheta, T}\Vert h\Vert_H^\vartheta.}\end{array}	
 \end{equation}
On the other hand, if we assume that $\Vert h\Vert_H^2\leq \e\,t/2$ we write
\[D^2\Gamma_{\e,\delta}^g(v_1)(t,x)-D^2\Gamma_{\e,\delta}^g(v_2)(t,x)=a_{\e,\delta}(h,t,x)+b_{\e,\delta}(h,t,x),\]
where
\[a_{\e,\delta}(h,t,x):=\int_0^{\e^{-1}\Vert h\Vert_H^2} D^2R^\e_s\left(\gamma_{\e,\delta}(v_1,t-s)-\gamma_{\e,\delta}(v_2,t-s)\right)\,ds,\]
and
\[b_{\e,\delta}(h,t,x)=:\int_{\e^{-1}\Vert h\Vert_H^2}^t D^2R^\e_s\left(\gamma_{\e,\delta}(v_1,t-s)-\gamma_{\e,\delta}(v_2,t-s)\right)\,ds.\]
Then, thanks to \eqref{91} and \eqref{93}, we can proceed as in Step 3 of the proof of Lemma \ref{lemma4.4} and we obtain that \eqref{96} holds also when $\Vert h\Vert_H^2\leq t/2$. In particular, we obtain that
\[\e^{\varrho+\frac 12}(t\wedge 1)^{\varrho+\frac 12}\,[D^2\Gamma_{\e,\delta}^g(v_1)(t)-D^2\Gamma_{\e,\delta}^g(v_2)(t)]_\vartheta\leq c \,\mathfrak{l}_{\e,\delta}(R,t)\Vert v_1-v_2\Vert_{\e, \varrho, \eta, \vartheta, T},\]
so that
\begin{equation}
\label{97}
I_{\delta,4}(\e)\leq c \,\mathfrak{l}_{\e,\delta}(R,t)\Vert v_1-v_2\Vert_{\e, \varrho, \eta, \vartheta, T}.	
\end{equation}

Therefore, if we combine \eqref{86-bis}, \eqref{87}, \eqref{86} and \eqref{97}, we obtain that 
\[\Vert	\Gamma_{\e,\delta}^g(v_1)-\Gamma_{\e,\delta}^g(v_2)\Vert_{\e, \varrho, \eta, \vartheta, T}\leq c\, \mathfrak{l}_{\e,\delta}(R,t)\Vert v_1-v_2\Vert_{\e, \varrho, \eta, \vartheta, T}.\]
This means that if we first  choose $\delta_1\leq \delta^\prime$ such that
\[ c\,\delta_1\,(1+R)^2<\frac 12,\]
and then
$T_1\leq T^\prime$ such that
\[ c\,\e^{-\frac 12}\,(T_1\wedge 1)^{\frac 12}<\frac 12,\]
we can conclude that $\Gamma_{\delta, g}$ maps $\mathcal{Y}^R_{\varrho, \vartheta, T_1}$ into itself as a contraction, for every $\delta\leq \delta_1.$
\end{proof}

\section{Further  properties of mild solutions of the quasi-linear problem}
\label{ssec4.2}
We will show that any mild solution $u_\e$  of equation \eqref{quasi-linear-bis} that belongs to $C_{\e,\varrho, \eta}((0,T];C^{2+\vartheta}_b(H))$ is in fact a classical solution in the sense of Definition \ref{def-classical}. Moreover, by using its probabilistic interpretation   in terms of equation \eqref{stoch-pde}, we will prove that a maximum principle holds for equation \eqref{quasi-linear-bis}. This will imply that the local mild solution we have found in Section \ref{sec4} is the unique global classical solution of Theorem \ref{teo1}.

\medskip

We start by proving that $QD^2_xu_\e(t,x)$ is a trace-class operator. 
\begin{Lemma}
\label{lemma4.6}
For every $t \in\,(0,T]$ and $x \in\,H$, we have that $QD^2_x u_\e(t,x) \in\,\mathcal{L}_1(H)$.
\end{Lemma}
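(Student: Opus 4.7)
The plan is to start from the mild representation
\begin{equation*}
u_\e(t,x) = R^\e_t g(x) + \int_0^t R^\e_{t-s}\gamma_{\e,\delta}(u_\e,s)(x)\,ds,
\end{equation*}
differentiate twice in $x$, compose with $Q$, and estimate each piece in the trace norm $\mathcal{L}_1(H)$. The crucial analytic input is a trace-class smoothing estimate for the $\e$-rescaled Ornstein--Uhlenbeck semigroup $R^\e_t$ analogous to \eqref{16}. Since the covariance of $R^\e_t$ is $\e Q_t$, we have $\Lambda^\e_t=\e^{-1/2}\Lambda_t$; tracking the $\e$ factors through the computation in \cite[Proposition 6.2.9]{DPZ2}, combined with the semigroup law $R^\e_t=R^\e_{t/2}R^\e_{t/2}$ and \eqref{maineps}, yields, for every $\beta \in (0,1)$ and $\varphi\in C^\beta_b(H)$,
\begin{equation*}
\sup_{x \in H}\,\Vert Q D^2_x R^\e_t \varphi(x)\Vert_{\mathcal{L}_1(H)} \leq c\,\e^{-(1-\beta/2)}\,\kappa_\beta(t/2)\,\Vert\varphi\Vert_\beta.
\end{equation*}

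I would first apply this bound with $\beta=\eta$ and $\varphi=g$: together with Hypothesis \ref{H2}(5) this immediately yields $\Vert QD^2_xR^\e_tg(x)\Vert_{\mathcal{L}_1(H)}<\infty$ for every $t\in (0,T]$ and $x\in H$, which handles the first term. For the convolution term I would apply the bound with $\beta=\vartheta$ and $\varphi=\gamma_{\e,\delta}(u_\e,s)$; since $u_\e\in C_{\e,\varrho,\eta}((0,T];C^{2+\vartheta}_b(H))$, Lemma \ref{lemma5.1} and \eqref{90} give $\Vert\gamma_{\e,\delta}(u_\e,s)\Vert_\vartheta \leq c\,\alpha_{\e,\delta}(R,s)$, so
\begin{equation*}
\Vert Q D^2_x R^\e_{t-s}\gamma_{\e,\delta}(u_\e,s)(x)\Vert_{\mathcal{L}_1(H)}\leq c_\e\,\kappa_\vartheta((t-s)/2)\,\alpha_{\e,\delta}(R,s).
\end{equation*}
Now \eqref{100} provides $\kappa_\vartheta((t-s)/2)\leq c((t-s)\wedge 1)^{-\beta_\vartheta}e^{-\alpha_\vartheta(t-s)/2}$ with $\beta_\vartheta<1$, while $\alpha_{\e,\delta}(R,\cdot)$ has at worst the integrable singularity $(s\wedge 1)^{-(\varrho+1/2)}$ at $s=0$ because $\varrho<1/4$. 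Hence \eqref{93} implies that the $\mathcal{L}_1(H)$-valued map $s\mapsto QD^2_xR^\e_{t-s}\gamma_{\e,\delta}(u_\e,s)(x)$ is Bochner integrable on $[0,t]$.

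To conclude I would justify the interchange of $QD^2_x$ with the time integral by truncating at $t-\eta$, differentiating in $\mathcal{L}_1(H)$ using the smoothing of $R^\e_{t-s}$ on $[0,t-\eta]$, and letting $\eta\downarrow 0$; the continuous embedding $\mathcal{L}_1(H)\hookrightarrow\mathcal{L}(H)$ identifies the $\mathcal{L}_1(H)$-valued integral with $QD^2_x$ applied to the convolution, which is already known to exist in $\mathcal{L}(H)$ since $u_\e(t,\cdot)\in C^2_b(H)$. The main obstacle is the $\e$-dependent trace-class smoothing estimate displayed above, whose proof requires a careful bookkeeping of the $\e$ factors in the Gaussian integral formula of \cite[Proposition 6.2.9]{DPZ2}. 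Once it is available, the rest is a routine combination of Lemma \ref{lemma5.1} with the integrability bound \eqref{93}.
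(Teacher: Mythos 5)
Your proposal is correct and follows essentially the same route as the paper's proof: decompose $u_\e$ via the mild formulation into $R^\e_t g$ plus the convolution term $\Gamma_{\e,\delta}(u_\e)$, apply the trace-class smoothing estimate \eqref{16} (with the $\e$-scaling obtained by substituting $\Lambda^\e_t=\e^{-1/2}\Lambda_t$) together with \eqref{100} to each piece, and use \eqref{90} and the integrability of $\kappa_\vartheta((t-s)/2)\,\alpha_{\e,\delta}(R,s)$ on $[0,t]$ (which follows from $\beta_\vartheta<1$ and $\varrho<1/4$) to conclude. The extra remarks you add about the explicit $\e$-bookkeeping and the truncation argument justifying the interchange of $QD^2_x$ with the time integral are details the paper treats as implicit, but they are correct and do not alter the argument.
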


\begin{proof}
If $u_\e$ is a mild solution, with the notations  we have introduced in Section \ref{sec4} we have
\[u_\e(t,x)=R^\e_t g(x)+\Gamma_{\e,\delta}(u_\e)(t,x).\]
According to \eqref{16} we have that $Q D^2_x R^\e_t g(t,x) \in\,\mathcal{L}_1(H)$, for every $\e \in\,(0,1)$, $t>0$ and $x \in\,H$, and
thanks to \eqref{16} and \eqref{100}
\[\Vert Q D^2_x R^\e_t g(t)\Vert_0\leq \kappa_{\e,\eta}(t/2)\,\Vert g\Vert_\eta.\] 
As far as $\Gamma_{\e,\delta}(u)$ is concerned, if $R=\Vert u_\e\Vert_{\e, \varrho, \eta, \vartheta, T}$, thanks  to \eqref{16}, \eqref{100} and \eqref{90}, we have
\[\begin{array}{l}
\ds{\Vert QD^2\Gamma_{\e,\delta}(u)(t,x)\Vert_{\mathcal{L}_1(H)}\leq \int_0^t \Vert QD^2_xR^\e_{t-s}\gamma_{\e,\delta}(u,s)(x)\Vert_{\mathcal{L}_1(H)}\,ds\leq \int_0^t\kappa_{\e,\vartheta}(t-s)\Vert \gamma_{\e,\delta}(s)\Vert_\vartheta\,ds}\\[18pt]
\ds{\leq c\,e^{-\beta_\vartheta}\int_0^t((t-s)\wedge 1)^{-\beta_\vartheta} e^{-\alpha_\vartheta\frac{(t-s)}2 }\e^{-(\varrho+\frac 12)}(s\wedge 1)^{-(\frac 1+\varrho)}\lambda_{\e,\delta}(R,s)\,ds\leq c_\e(R,T).}	\end{array}\]
This allows to conclude that $QD^2_x u_\e(t,x) \in\,\mathcal{L}_1(H)$ for every $\e \in\,(0,1)$, $t \in\,(0,T]$ and $x \in\,H$.

\end{proof}

Next, we show that $u_\e$ is differentiable with respect to $t \in\,(0,T]$ and $x \in\,D(A)$ and is a classical solution of equation \eqref{quasi-linear-bis}. In Subsection \ref{ssec2.3}, we have seen that for every $\varphi \in\,B_b(H)$ and $x \in\,D(A)$ the mapping
\[t \in\,(0,+\infty)\mapsto R^\e_t\varphi(x) \in\,\mathbb{R},\]
is differentiable and 
\[D_t R^\e_t\varphi(x)=\mathcal{L}_\e R^\e_t\varphi(x).\]
Hence,  since 
\[\begin{array}{ll}
\ds{u_\e(t,x)} & \ds{=R^\e_tg(x)+\int_0^t R^\e_{t-s}\left(\frac \e 2\text{Tr}\,[\delta\,F(u_\e(s,\cdot))D^2_x u_\e(s,\cdot)]+\langle b(\cdot),Du_\e(s,\cdot)\rangle_H\right)(x)\,ds}\\[18pt]
& \ds{=R^\e_tg(x)+\Gamma_{\e,\delta}(u_\e)(t,x),}	
\end{array}
\]
thanks to Lemma \ref{lemma4.6}, for every $x \in\,D(A)$  we can differentiate both sides with respect to $t>0$, and we get
\[\begin{array}{ll}
\ds{D_t u_\e(t,x)} & \ds{=\mathcal{L}_\e R^\e_tg(x)+\frac \e 2\text{Tr}\,[\delta\,F(u_\e(t,x))D^2_x u_\e(t,x)]+\langle b(x),Du_\e(t,x)\rangle_H+\mathcal{L}_\e\Gamma_{\e,\delta}(t)(t,x)}\\[18pt]
& \ds{=\mathcal{L}_\e u_\e(t,x)+\frac \e 2\text{Tr}\,[\delta\,F(u_\e(t,x))D^2_x u_\e(t,x)]+\langle b(x),Du_\e(t,x)\rangle_H.}	
\end{array}
\]

Thus, we have proven the following result.
\begin{Theorem}
Under Hypotheses \ref{H1} to \ref{H4}, if $u_\e$ is a mild solution 
 of equation \eqref{quasi-linear-bis} that belongs to $C_{\e,\varrho, \eta}((0,T];C^{2+\vartheta}_b(H))$,  then it is a classical solution.	\end{Theorem}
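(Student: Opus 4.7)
The plan is to verify each of the four conditions of Definition \ref{def-classical} by working from the mild identity $u_\e(t,x)=R^\e_t g(x)+\Gamma_{\e,\delta}(u_\e)(t,x)$. Condition 1 is immediate from the space definition: the inclusion $C_{\e,\varrho,\eta}((0,T];C^{2+\vartheta}_b(H))\subset C([0,T];C^\eta_b(H))$ gives joint continuity on $[0,T]\times H$, and evaluating the mild formula at $t=0$ gives $u_\e(0,\cdot)=g$. The $C^2_b(H)$-regularity in condition 2 is part of the space definition, while the trace-class property $QD^2_x u_\e(t,x)\in\mathcal L_1(H)$ is exactly the content of Lemma \ref{lemma4.6}.

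For conditions 3 and 4, I fix $x\in D(A)$ and differentiate each piece of the mild formula in $t>0$. The linear piece $t\mapsto R^\e_t g(x)$ is a classical solution of the linear Ornstein--Uhlenbeck Kolmogorov equation by the result recalled in Subsection \ref{ssec2.3} (Theorem 6.2.4 in \cite{DPZ2}, whose hypotheses are granted by Hypothesis \ref{H2}). Hence $D_t R^\e_t g(x)=\mathcal L_\e R^\e_t g(x)$ for every $x\in D(A)$. For the nonlinear piece, the aim is to establish
\[
D_t\Gamma_{\e,\delta}(u_\e)(t,x)=\gamma_{\e,\delta}(u_\e,t)(x)+\mathcal L_\e\Gamma_{\e,\delta}(u_\e)(t,x),
\]
with $\gamma_{\e,\delta}(u_\e,t)(x)=\tfrac{\e\delta}{2}\mathrm{Tr}[F(u_\e(t,\cdot))(x)D^2_x u_\e(t,x)]+\langle b(x),D_x u_\e(t,x)\rangle_H$. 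Adding this to the linear identity yields precisely equation \eqref{quasi-linear-bis} at every $(t,x)\in(0,+\infty)\times D(A)$.

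The main obstacle is therefore the differentiation in $t$ of the convolution $\Gamma_{\e,\delta}(u_\e)(t,x)=\int_0^t R^\e_{t-s}\gamma_{\e,\delta}(u_\e,s)(x)\,ds$. I would adapt the standard argument for inhomogeneous Kolmogorov equations (see the proof of Theorem 6.2.4 in \cite{DPZ2}): change variables to $\tau=t-s$ and split the integral at $\tau=t/2$. On the near-diagonal part $\int_0^{t/2}R^\e_\tau\gamma_{\e,\delta}(u_\e,t-\tau)(x)\,d\tau$ one exploits the H\"older continuity in $s$ of $\gamma_{\e,\delta}(u_\e,s)$ in $C^\vartheta_b(H)$, inherited from $u_\e\in C_{\e,\varrho,\eta}((0,T];C^{2+\vartheta}_b(H))$, and the $C^0$-semigroup identity $\lim_{\tau\downarrow 0}R^\e_\tau\varphi=\varphi$ for $\varphi\in C_b(H)$, to produce the source term $\gamma_{\e,\delta}(u_\e,t)(x)$. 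The remaining part $\int_{t/2}^t R^\e_{t-s}\gamma_{\e,\delta}(u_\e,s)(x)\,ds$ can be treated by differentiating under the integral sign thanks to the smoothing estimates \eqref{main1}, \eqref{maineps}, \eqref{16}, which reproduce $\mathcal L_\e$ acting on the convolution; estimate \eqref{17} ensures that $D_x R^\e_\tau\gamma_{\e,\delta}(u_\e,s)(x)$ lies in $D(A^\star)$, so that the term $\langle Ax,D_x\Gamma_{\e,\delta}(u_\e)(t,x)\rangle_H$ is well defined for $x\in D(A)$. The near-singularity at $s=0$, where $\Vert\gamma_{\e,\delta}(u_\e,s)\Vert_\vartheta$ blows up like $(s\wedge 1)^{-\varrho-1/2}$ by \eqref{36}, is integrable since $\varrho<1/4$, so all the dominated-convergence steps go through and the identity above is justified.
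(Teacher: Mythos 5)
Your proposal follows the paper's overall strategy: conditions 1--2 of Definition \ref{def-classical} come from the definition of $C_{\e,\varrho,\eta}((0,T];C^{2+\vartheta}_b(H))$ together with Lemma \ref{lemma4.6}, and conditions 3--4 are obtained by differentiating the mild identity $u_\e=R^\e_t g+\Gamma_{\e,\delta}(u_\e)$ in $t$, using the Ornstein--Uhlenbeck theory of Subsection \ref{ssec2.3} for the linear term. The paper's own proof is even terser than yours: it states, citing Lemma \ref{lemma4.6}, that both sides of the mild formula can be differentiated in $t$ for $x\in D(A)$, without exhibiting the limiting argument you sketch.

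There is one step in your write-up that does not hold as you claim: time-H\"older continuity of $s\mapsto\gamma_{\e,\delta}(u_\e,s)$ in $C^\vartheta_b(H)$ is not ``inherited'' from $u_\e\in C_{\e,\varrho,\eta}((0,T];C^{2+\vartheta}_b(H))$. That space only gives continuity (with weighted blow-up near $t=0$) of $s\mapsto u_\e(s,\cdot)$ with values in $C^{2+\vartheta}_b(H)$, hence only continuity of $\gamma_{\e,\delta}(u_\e,\cdot)$ in time, with no H\"older modulus. The standard way to close this gap, and what the paper implicitly relies on, is the semigroup-law difference quotient rather than the H\"older split: for $h>0$,
\[
\Gamma_{\e,\delta}(u_\e)(t+h,x)=R^\e_h\Gamma_{\e,\delta}(u_\e)(t,\cdot)(x)+\int_t^{t+h}R^\e_{t+h-s}\gamma_{\e,\delta}(u_\e,s)(x)\,ds,
\]
and after dividing by $h$ and letting $h\downarrow 0$ the first quotient tends to $\mathcal L_\e\Gamma_{\e,\delta}(u_\e)(t,x)$ for $x\in D(A)$ (here Lemma \ref{lemma4.6} and the $C^2_b$-regularity furnished by the ambient space guarantee that $\Gamma_{\e,\delta}(u_\e)(t,\cdot)$ is in the domain of $\mathcal L_\e$ in the relevant pointwise sense), while the second term's average tends to $\gamma_{\e,\delta}(u_\e,t)(x)$ by time-continuity alone. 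This replaces the unavailable H\"older-in-time input your splitting uses and yields the same conclusion.
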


Next, we show how  any solution 
 of equation \eqref{quasi-linear-bis} is related to the stochastic PDE \eqref{stoch-pde}.
 
 \begin{Theorem}
 	Assume Hypotheses \ref{H1} to \ref{H4}. Then if $u_\e\in\,C_{\e,\varrho, \eta}((0,T];C^{2+\vartheta}_b(H))$ is a solution of equation \eqref{quasi-linear-bis}  and $X_\e^{t,x} \in\,L^2(\Omega;C([0,t];H))$ is a solution of equation   \eqref{stoch-pde}, we have
 	\begin{equation}
 	\label{106}
 	u_\e(t,x)=\mathbb{E} g(X_\e^{t,x}(t)).	
 	\end{equation}
 \end{Theorem}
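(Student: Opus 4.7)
The plan is to apply an It\^o-type formula in Hilbert space to the composition $s \mapsto u_\e(t-s, X_\e^{t,x}(s))$ on subintervals $[0, t-\delta]$ for small $\delta>0$, exploit the drift cancellation produced by \eqref{quasi-linear-bis}, take expectations to kill the resulting martingale, and finally let $\delta \downarrow 0$. Since $u_\e$ is a classical solution in $C_{\e,\varrho,\eta}((0,T]; C^{2+\vartheta}_b(H))$, for every $s \in [0, t-\delta]$ we have $u_\e(t-s,\cdot) \in C^2_b(H)$, $u_\e(\cdot, x)$ is differentiable on $(0,t]$ for $x \in D(A)$, and $QD_x^2 u_\e(t-s,x) \in \mathcal{L}_1(H)$ by Lemma \ref{lemma4.6}; moreover, $\Vert D_x u_\e(t-s,\cdot)\Vert_0$ and $\Vert D_x^2 u_\e(t-s,\cdot)\Vert_0$ are bounded respectively by $C_\e((t-s)\wedge 1)^{-\varrho}$ and $C_\e((t-s)\wedge 1)^{-(\varrho+\frac12)}$, both locally integrable on $[0, t-\delta]$ since $\varrho < 1/2$.

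The first and main obstacle is to rigorously justify It\^o's formula, since $X_\e^{t,x}(s)$ is only a mild solution and in general does not take values in $D(A)$. The standard remedy is a Yosida approximation: set $J_n := n(n-A)^{-1}$ and let $X^n$ solve the SDE obtained from the uncontrolled version of \eqref{stoch-pde} by replacing $A$ with the bounded operator $A_n := AJ_n$. Arguments analogous to those of Section \ref{sec-spde} yield $X^n \to X_\e^{t,x}$ in $L^2(\Omega; C([0,t];H))$. The classical It\^o formula applied to $u_\e(t-s, X^n(s))$ on $[0, t-\delta]$ produces
\[
u_\e(\delta, X^n(t-\delta)) - u_\e(t, x) = \int_0^{t-\delta}\!\!\Phi^n(s)\,ds + \sqrt{\e}\int_0^{t-\delta}\!\! \langle D_x u_\e(t-s, X^n(s)), \sigma(X^n(s), u_\e(t-s, X^n(s)))\, dW_s\rangle_H,
\]
where $\Phi^n(s)$ is the evaluation at $(t-s, X^n(s))$ of the expression $-D_t u_\e(t-s, y) + \langle A_n y + b(y), D_x u_\e(t-s,y)\rangle_H + \tfrac{\e}{2}\text{Tr}[\sigma\sigma^*(y, u_\e(t-s,y))\, D_x^2 u_\e(t-s,y)]$.

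Passage to the limit $n \to \infty$ in the Lebesgue integral relies on the decomposition $\sigma^*\sigma = Q + \delta f$ from \eqref{34}: the $Q$-part of the trace is controlled via Lemma \ref{lemma4.6}, while the $\delta f$-part is estimated in $\mathcal{L}_1$-operator norm times $\Vert D_x^2 u_\e(t-s,\cdot)\Vert_0$; for the unbounded-operator term, one uses that $\langle A_n y, D_x u_\e(t-s,y)\rangle_H \to \langle Ay, D_x u_\e(t-s,y)\rangle_H$ whenever $y \in D(A)$ and argues by density together with the mild formulation of $X_\e^{t,x}$. Since $u_\e$ is a classical solution of \eqref{quasi-linear-bis}, the limit integrand vanishes identically, leaving only the stochastic integral. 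The bounds on $D_x u_\e$ together with Hypothesis \ref{H3} make that integral a genuine $L^2$-martingale on $[0, t-\delta]$ with zero expectation, so taking expectations yields $u_\e(t,x) = \mathbb{E}\, u_\e(\delta, X_\e^{t,x}(t-\delta))$.

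The final step is the passage $\delta \downarrow 0$. Since $u_\e \in C([0,T]; C^\eta_b(H))$ with $u_\e(0,\cdot) = g$, we have $u_\e(\delta, \cdot) \to g$ uniformly on $H$, while $X_\e^{t,x}(t-\delta) \to X_\e^{t,x}(t)$ in $L^2(\Omega; H)$ by path continuity. Combined with the uniform bound $\sup_{\delta \in [0,t]}\Vert u_\e(\delta,\cdot)\Vert_0 < \infty$, dominated convergence produces \eqref{106}. The delicate technical point throughout is the uniform-in-$n$ control of the second-order trace term $\text{Tr}[\sigma\sigma^*(X^n, u_\e) D_x^2 u_\e]$, for which the trace-class decomposition \eqref{34} and the bound on $QD_x^2 u_\e(t-s,x)$ provided by Lemma \ref{lemma4.6} are essential.
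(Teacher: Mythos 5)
Your overall strategy (approximate the process, apply It\^o, cancel the drift against the PDE, take expectations, pass to the limit) is the same as the paper's, which itself adapts the argument from \cite[Proof of Theorem 9.25]{DPZ}. However, there is a genuine gap in your proposal that the paper's proof avoids with an extra regularization step you have omitted.

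The issue is that you apply It\^o's formula to $u_\e(t-s,X^n(s))$ with $u_\e$ \emph{unregularized}, where $X^n$ solves the SDE with the Yosida-type bounded drift $A_n=AJ_n$. But with a bounded drift, the mild and strong formulations coincide and the stochastic integral $\int_0^s e^{(s-r)A_n}\sigma(\cdots)\,dW_r$ takes values in $H$, not $D(A)$, so generically $X^n(s)\notin D(A)$. Since $u_\e$ is a classical solution only in the sense of Definition \ref{def-classical}, the partial derivative $D_t u_\e(t-s,y)$ and the quantity $\langle Ay,D_xu_\e(t-s,y)\rangle_H$ are defined only for $y\in D(A)$; hence your expression $\Phi^n(s)$ is not even well-defined on the event $\{X^n(s)\notin D(A)\}$, and the claim that ``the limit integrand vanishes identically'' does not make sense as stated. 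The proposed fix (``$\langle A_n y,D_xu_\e\rangle_H\to\langle Ay,D_xu_\e\rangle_H$ for $y\in D(A)$ plus a density argument'') does not close the gap: neither $X^n(s)$ nor the limit $X^{t,x}_\e(s)$ lies in $D(A)$, and there is no continuous extension of $\langle Ay,D_xu_\e\rangle_H$ available off $D(A)$ to which you could appeal.

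The paper resolves this by pre-composing the solution with $J_m$: it sets $u_{\e,m}(t,x):=u_\e(t,J_m x)$, which is $C^{1,2}$ on all of $(0,T]\times H$ because $J_m x\in D(A)$ for every $x$, and at the same time replaces $b$, $\Sigma_{\e,t}$, the initial condition, and the noise by their $J_m$-smoothed/finite-dimensional versions so that $X^{t,x}_{\e,m}$ is a genuine strong $D(A)$-valued solution. The PDE is then evaluated at the point $J_mX^{t,x}_{\e,m}(s)\in D(A)$, which makes the drift cancellation rigorous, and the residual error terms $I^\e_{m,1},I^\e_{m,2}$ are shown to vanish using \eqref{107} and \eqref{109}. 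Without the composition $u_{\e,m}(t,x)=u_\e(t,J_m x)$, the It\^o step does not go through; this is the missing idea in your proposal. Your use of the cutoff $t-\delta$ and the final limit $\delta\downarrow 0$ is a legitimate variant (the paper instead integrates directly to $t$ and checks improper integrability), and your control of the trace term via the decomposition $\sigma^\star\sigma=Q+\delta f$ and Lemma \ref{lemma4.6} is in line with the paper, so those parts are fine.
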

 
 \begin{proof}
 	The natural way  to prove \eqref{106} is by applying the It\^o formula to the function $(s,x) \in\,[0,t]\times H\mapsto u_\e(t-s,x)$ and to the process $X_\e^{t,x}(s)$. However, we cannot do this directly first because $u_\e$ satisfies equation  \eqref{quasi-linear-bis} in classical sense only for $x \in\,D(A)$ and second because $X_\e^{t,x}$ is only a mild solution of equation \eqref{stoch-pde}, and not a strong solution, as required when It\^o's formula is used. To overcome these difficulties, we introduce a suitable approximation of $u_\e$ and $X_\e^{t,x}$, by adapting an argument introduced in \cite[Proof of Theorem 9.25]{DPZ}.
 	 
 For every $m \in\,\mathbb{N}$ we define
 $J_m =m(m-A)^{-1}$ and
 \[u_{\e,m}(t,x)=u_\e(t,J_mx),\ \ \ \ \ \ (t,x) \in\,[0,T]\times H.\]
 Since $J_mx \to x$, as $m\to\infty$, and $u_\e\in\,C_{\e,\varrho, \eta}((0,T];C^{2+\vartheta}_b(H))$,
 we have that
\begin{equation}
	\label{107}
	\lim_{m\to \infty}\sup_{t \in\,[0,T]}\Vert u_{\e,m}(t,\cdot)-u(t,\cdot)\Vert_0=0.
\end{equation} 
Moreover, 
\begin{equation}
\label{110}
D_xu_{\e,m}(t,x)=	J_m^\star D_x u(t,J_m x),\ \ \ \ D^2_x u_{\e,m}(t,x)=J_m^\star D^2_x u(t,J_m x)J_m.
\end{equation}

Next, for every $m \in\,\mathbb{N}$ we introduce the stochastic PDE
\begin{equation}
\label{108}
\left\{\begin{array}{l}
\ds{dX_{\e,m}^{t,x}(s)=\left[A	X_{\e,m}^{t,x}(s)+J_m b(X_{\e,m}^{t,x}(s))\right]\,ds+J_m\Sigma_{\e,t}(s,X_{\e,m}^{t,x}(s))\, dW^m_s,}\\[10pt]
\ds{X^{t,x}(0)=J_mx,}
	\end{array}\right.
\end{equation}
where $\Sigma_{\e,t}$ is the operator introduced in \eqref{sigma} and
 $W^m_t$ is the projection of the cylindrical Wiener process $W_t$ onto $H_m:=\text{span}\{e_1,\ldots,e_m\}$. 
By proceeding as in Section \ref{sec-spde}, we can prove that equation \eqref{108} admits  a unique mild solution $X^{t,x}_{\e,m}\in\,L^2(\Omega;C([0,T];H))$. Since $J_m$ maps $H$ into $D(A)$ and $W^m_t$ is a finite dimensional noise, it is immediate to check that 	$X^{t,x}_{\e,m}$ is a strong solution. Namely
\[X^{t,x}_{\e,m}(s)=J_mx+\int_0^s AX^{t,x}_{\e,m}(r)\,dr+ \int_0^s J_mb(X_{\e,m}^{t,x}(r))\,dr+\int_0^s J_m\Sigma_{\e,t}(r,X_{\e,m}^{t,x}(r))\,dW^m(r).\]
 At the end of this section we will prove that
 \begin{equation}
 \label{109}
 \lim_{m\to\infty}\sup_{s \in\,[0,t]}\mathbb{E}\Vert X^{t,x}_{\e,m}(s)-X^{t,x}(s) \Vert_H=0.	
 \end{equation}
	
 Now we apply It\^o's formula to  $u_{\e,m}$ 	and $X^{t,x}_{\e,m}$ and thanks to \eqref{110} we get
 \[\begin{array}{l}
\ds{d_su_{\e,m}(t-s,X^{t,x}_{\e,m}(s))=-D_t u(t-s,J_mX^{t,x}_{\e,m}(s))\,ds}\\[10pt]
\ds{+\frac 12 \text{Tr}\left[J_m^\star D^2_x u(t-s,J_mX^{t,x}_{\e,m}(s))J_m(J_m\Sigma_{\e,t}(r,X_{\e,m}^{t,x}(r)))(J_m\Sigma_{\e,t}(r,X_{\e,m}^{t,x}(r)))^\star\right]\,ds}	\\[18pt]
\ds{+\langle AJ_mX^{t,x}_{\e,m}(s)+ J_m^2b(X_{\e,m}^{t,x}(s)),D_xu(t-s,J_mX^{t,x}_{\e,m}(s))\rangle_H\,ds}\\[18pt]
\ds{+\langle J^2_m\Sigma_{\e,t}(r,X_{\e,m}^{t,x}(s))\,dW^m(s),D_xu(t-s,J_mX^{t,x}_{\e,m}(s))\rangle_H.}
\end{array}
\]
Therefore, recalling that $u(t,x)$ satisfies equation \eqref{quasi-linear-tris}, for every $x \in\,D(A)$, since $J_mX^{t,x}_{\e,m}(s) \in\,D(A)$ we have
 \begin{equation}
 \label{111}	
 \begin{array}{l}
 \ds{d_su_{\e,m}(t-s,X^{t,x}_{\e,m}(s))}\\[18pt]
 \ds{=\langle J^2_m\Sigma_{\e,t}(r,X_{\e,m}^{t,x}(s))\,dW^m(s),D_xu(t-s,J_mX^{t,x}_{\e,m}(s))\rangle_H+[I_{m,1}^\e(s)+I_{m,2}(s)^\e]\,ds,}
 	\end{array}\end{equation}
where
\[\begin{array}{l}
\ds{I_{m,1}^\e(s):=\frac \e2 \text{Tr}\left[J_m^\star D^2_x u(t-s,J_mX^{t,x}_{\e,m}(s))J_m(J_m\Sigma_{\e,t}(r,X_{\e,m}^{t,x}(r)))(J_m\Sigma_{\e,t}(r,X_{\e,m}^{t,x}(r)))^\star\right]}	\\[18pt]
\ds{-\frac \e2 \text{Tr}\left[D^2_x u(t-s,J_mX^{t,x}_{\e,m}(s))(\Sigma_{\e,t}(r,J_mX_{\e,m}^{t,x}(r)))(\Sigma_{\e,t}(r,J_mX_{\e,m}^{t,x}(r)))^\star\right],}
\end{array}
\]
 and
\[I_{m,2}^\e(s):=\begin{array}{l}
\ds{\langle  J_m^2b(X_{\e,m}^{t,x}(s))-b(J_mX_{\e,m}^{t,x}(s)),D_xu(t-s,J_mX^{t,x}_{\e,m}(s))\rangle_H.}
\end{array}
\]
If we take the expectation of both sides in \eqref{111} and integrate with respect to $s \in\,[0,t]$  we get
\begin{equation}
\label{112}	
\mathbb{E} g(J_mX_{\e,m}^{t,x}(t))=u_{\e,m}(t,J_m x)+\int_0^t \mathbb{E}\left( I_{m,1}^\e(s)+I_{m,2}^\e(s)\right)\,ds.
\end{equation}
In view of \eqref{107} and \eqref{109}, we have that 
\[\lim_{m\to\infty }\mathbb{E} g(J_mX_{\e,m}^{t,x}(t))=\mathbb{E} g(X^{t,x}(t)),\ \ \ \ \ \ \ \ \ \lim_{m\to\infty}u_{\e,m}(t,J_m x)=u(t,x).\]
Moreover, since $u_\e \in\,C_{\e,\varrho, \eta}((0,T];C^{2+\vartheta}_b(H))$, by using again \eqref{107} and \eqref{109} it is not difficult to check that
\[\lim_{m\to\infty} \int_0^t \mathbb{E}\left( |I_{m,1}^\e(s)|+|I^\e_{m,2}(s)|\right)\,ds=0.\]
Therefore, if we take the limit  of both sides in \eqref{112}, as $m\to\infty$, we obtain \eqref{106}.

 \end{proof}
 
 \begin{Remark}
 	{\em Thanks to  the representation formula \eqref{106} of $u_\e$, we have that 
 	\begin{equation}
 	\label{113}
 	\sup_{t \in\,[0,T]}\Vert u_\e(t,\cdot)\Vert_0\leq \Vert g\Vert_0.	
 	\end{equation}}

 \end{Remark}

Now, we conclude this section with the proof of \eqref{109}.
\begin{Lemma}
If $X^{t,x}_{\e,m}$ is the solution of problem \eqref{108}, we have	
\begin{equation}
 \label{109-bis}
 \lim_{m\to\infty}\sup_{s \in\,[0,t]}\mathbb{E}\Vert X^{t,x}_{\e,m}(s)-X_\e^{t,x}(s) \Vert^2_H=0.	
 \end{equation}
\end{Lemma}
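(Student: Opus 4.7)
The idea is to compare $X^{t,x}_{\e,m}$ and $X^{t,x}_\e$ via their mild formulations and then apply a Gronwall-type argument after isolating the terms that vanish because of the strong convergence $J_m\to I$ on $H$. Since $J_m=m(m-A)^{-1}$ is uniformly bounded in $\mathcal{L}(H)$ (by the Hille--Yosida estimates combined with \eqref{67}) and $J_m h\to h$ for every $h\in H$, I first establish the uniform a-priori bound
\[
\sup_{m\in\mathbb{N}}\,\sup_{s\in[0,t]}\mathbb{E}\|X^{t,x}_{\e,m}(s)\|_H^2\leq C(\e,t,x),
\]
by Gronwall on the mild equation for $X^{t,x}_{\e,m}$, using $\|J_m\|_{\mathcal{L}(H)}\leq C$, the Lipschitz and growth bounds on $b$, and the estimate \eqref{101} applied with $X_2\equiv 0$ together with Hypothesis \ref{H3} to bound the stochastic integral via It\^o's isometry.

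Next, setting $Z_m(s):=X^{t,x}_{\e,m}(s)-X^{t,x}_\e(s)$, I would write
\[
Z_m(s)=e^{sA}(J_m-I)x+\int_0^s e^{(s-r)A}\bigl[J_m b(X^{t,x}_{\e,m}(r))-b(X^{t,x}_\e(r))\bigr]\,dr+\mathcal{M}_m(s),
\]
where $\mathcal{M}_m(s)$ is the difference of the two stochastic integrals. The drift piece is split as
$e^{(s-r)A}(J_m-I)b(X^{t,x}_{\e,m}(r))+e^{(s-r)A}[b(X^{t,x}_{\e,m}(r))-b(X^{t,x}_\e(r))]$: the second summand is handled by the Lipschitz continuity of $b$ (contributing a Gronwall-type term in $\mathbb{E}\|Z_m(r)\|_H^2$), while the first is $o_m(1)$ uniformly in $s$ and $r$, because $b$ is bounded and $(J_m-I)h\to 0$ in $H$ for every $h$ (hence also $\mathbb{E}\|(J_m-I)b(X^{t,x}_{\e,m}(r))\|_H^2\to 0$ by dominated convergence using $\|J_m-I\|_{\mathcal{L}(H)}\leq C$).

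For the stochastic martingale I would further decompose
\[
\mathcal{M}_m(s)=\int_0^s\!e^{(s-r)A}J_m\bigl[\Sigma_{t,\e}(r,X^{t,x}_{\e,m}(r))-\Sigma_{t,\e}(r,X^{t,x}_\e(r))\bigr]dW^m(r)+\mathcal{R}^{(1)}_m(s)+\mathcal{R}^{(2)}_m(s),
\]
where $\mathcal{R}^{(1)}_m(s)=\int_0^s e^{(s-r)A}(J_m-I)\Sigma_{t,\e}(r,X^{t,x}_\e(r))\,dW^m(r)$ and $\mathcal{R}^{(2)}_m(s)=\int_0^s e^{(s-r)A}\Sigma_{t,\e}(r,X^{t,x}_\e(r))\,d(W^m-W)(r)$. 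The first summand is controlled by It\^o's isometry and \eqref{101} to produce, after use of $\|J_m\|_{\mathcal{L}(H)}\leq C$, an integrable kernel times $\mathbb{E}\|Z_m(r)\|_H^2$, again feeding Gronwall. For $\mathcal{R}^{(1)}_m$ I use Hypothesis \ref{H3} to guarantee that $e^{(s-r)A}\Sigma_{t,\e}(r,X^{t,x}_\e(r))$ is Hilbert--Schmidt with an $L^2$-in-time kernel $((s-r)\wedge 1)^{-1/2}$, so $(J_m-I)$ acting on an $\mathcal{L}_2(H)$ operator tends to zero in the Hilbert--Schmidt norm by strong convergence and dominated convergence. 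Finally, $\mathcal{R}^{(2)}_m$ is an integral against the noise $W-W^m$ supported on $H_m^\perp$, whose Hilbert--Schmidt contribution is $\sum_{i>m}\|e^{(s-r)A}\Sigma_{t,\e}(r,X^{t,x}_\e(r))e_i\|_H^2\to 0$ by monotone convergence; combined with the integrable singularity from Hypothesis \ref{H3} and the a-priori bound on $X^{t,x}_\e$, dominated convergence yields $\mathbb{E}\|\mathcal{R}^{(2)}_m(s)\|_H^2\to 0$ uniformly in $s\in[0,t]$.

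Collecting these estimates produces an inequality of the form
\[
\mathbb{E}\|Z_m(s)\|_H^2\leq \varepsilon_m+C(\e,t)\int_0^s((s-r)\wedge 1)^{-1/2}\bigl(1+((t-r)\wedge 1)^{-2\varrho}\bigr)\,\mathbb{E}\|Z_m(r)\|_H^2\,dr,
\]
with $\varepsilon_m\to 0$ as $m\to\infty$ and a singular but integrable kernel (since $\varrho<1/4$). A singular Gronwall lemma then gives $\sup_{s\in[0,t]}\mathbb{E}\|Z_m(s)\|_H^2\to 0$, which is \eqref{109-bis}. The main obstacle I anticipate is the Hilbert--Schmidt convergence used in the remainder terms $\mathcal{R}^{(1)}_m$ and $\mathcal{R}^{(2)}_m$: one must carefully combine the $(t\wedge 1)^{-1/2}$ singularity from Hypothesis \ref{H3} with the strong-but-not-uniform convergence $J_m\to I$, and justify passage to the limit under the trace via dominated convergence with a sharp integrable majorant.
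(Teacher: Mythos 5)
Your proof follows the same overall route as the paper's argument: write $Z_m=X^{t,x}_{\e,m}-X^{t,x}_\e$ via the mild formulations, split the stochastic integral into a Lipschitz piece against $dW^m$, a remainder $(J_m-I)\Sigma_{t,\e}(\cdot,X^{t,x}_\e)$ piece against $dW^m$, and a piece against $d(W-W^m)$, bound the Lipschitz piece with the singular kernel coming from Hypothesis \ref{H3} and the $\varrho<1/4$ condition, send the other pieces to zero by dominated convergence, and close with a singular Gronwall lemma. That structure, and your treatment of $\mathcal{R}^{(1)}_m$, $\mathcal{R}^{(2)}_m$, matches the paper's $I^\e_{m,5}$, $I^\e_{m,6}$ terms.

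There is, however, a genuine gap in your handling of the drift term. You split
\[
J_m b(X^{t,x}_{\e,m}(r))-b(X^{t,x}_\e(r))=(J_m-I)\,b(X^{t,x}_{\e,m}(r))+\bigl[b(X^{t,x}_{\e,m}(r))-b(X^{t,x}_\e(r))\bigr],
\]
and claim that the first summand is $o_m(1)$ ``because $b$ is bounded and $(J_m-I)h\to 0$ for every $h$ ... by dominated convergence using $\|J_m-I\|_{\mathcal{L}(H)}\le C$.'' This does not work: the strong operator convergence $J_m\to I$ is \emph{not} uniform on bounded subsets of $H$ (only on compact ones), and the argument of $b$ here is $X^{t,x}_{\e,m}(r)$, which itself depends on $m$. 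So for a fixed $(\omega,r)$ the quantity $\|(J_m-I)b(X^{t,x}_{\e,m}(\omega,r))\|_H$ is of the form $\|(J_m-I)\xi_m\|_H$ with a moving target $\xi_m$, and there is no pointwise limit to feed into dominated convergence; concluding that it tends to zero essentially presupposes $X^{t,x}_{\e,m}\to X^{t,x}_\e$, which is what you are trying to prove. The correct split, which is what the paper uses, places the $(J_m-I)$ against the \emph{fixed} limiting process:
\[
J_m b(X^{t,x}_{\e,m}(r))-b(X^{t,x}_\e(r))=J_m\bigl[b(X^{t,x}_{\e,m}(r))-b(X^{t,x}_\e(r))\bigr]+(J_m-I)\,b(X^{t,x}_\e(r)).
\]
Now the first term is controlled by $\|J_m\|_{\mathcal{L}(H)}$ and the Lipschitz constant of $b$, giving a Gronwall contribution in $\mathbb{E}\|Z_m(r)\|_H^2$, while in the second term the argument of $(J_m-I)$ is an $m$-independent, $\mathbb{P}\otimes dr$-a.e.\ fixed element of $H$, so $\|(J_m-I)b(X^{t,x}_\e(r))\|_H\to 0$ pointwise, and dominated convergence (using boundedness of $b$ and $\sup_m\|J_m-I\|_{\mathcal{L}(H)}<\infty$) legitimately gives $\int_0^t\mathbb{E}\|(J_m-I)b(X^{t,x}_\e(r))\|_H^2\,dr\to 0$. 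Note that in your split for the stochastic integral you did apply $(J_m-I)$ to $\Sigma_{t,\e}(\cdot,X^{t,x}_\e)$, which is the right move; the inconsistency is confined to the drift. Apart from this point, your preliminary a-priori bound on $\sup_m\sup_s\mathbb{E}\|X^{t,x}_{\e,m}(s)\|_H^2$ is not strictly needed once the split is done correctly (all the ``error'' terms involve only the $m$-independent $X^{t,x}_\e$), though it does no harm.
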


\begin{proof}
If we denote $\rho_{\e,m}(s):=X^{t,x}_{\e,m}(s)-X^{t,x}_\e(s)$ and $W_m(r)=W(r)-W^m(r)$, we have
\[\begin{array}{ll}
\ds{\rho_{\e,m}(s)=} & \ds{e^{sA}(J_mx-x)+\int_0^s e^{(s-r)A}\left(J_m b(X^{t,x}_{\e,m}(r))-b(X_\e^{t,x}(r))\right)\,dr}\\[18pt]
& \ds{+\int_0^s e^{(s-r)A}\left(J_m \Sigma_{\e,t}(r,X^{t,x}_{\e,m}(r))-\Sigma_{\e,t}(r,X_\e^{t,x}(r))\right)dW^m(r)}\\[18pt]
&\ds{+	\int_0^s e^{(s-r)A}\Sigma_{\e,t}(r,X_\e^{t,x}(r))dW_m(r).}\end{array}\]
Therefore, since $\Vert J_m\Vert_{\mathcal{L}(H)}\leq 1$, we have
\[\begin{array}{ll}
\ds{\mathbb{E}\Vert \rho_{\e,m}(s)\Vert^2_H\leq}  &  \ds{ c\Vert J_mx-x\Vert^2_H+c_t\int_0^s\mathbb{E}\Vert \rho_{\e,m}(r)\Vert^2_H\,dr}\\[18pt]
&\ds{+c_t\int_0^s\mathbb{E}\Vert J_m b(X_\e^{t,x}(r))-b(X_\e^{t,x}(r))\Vert_H^2\,dr}\\[18pt]
&\ds{+c\int_0^s\mathbb{E}\Vert e^{(s-r)A}J_m \left( \Sigma_{\e,t}(r,X^{t,x}_{\e,m}(r))-\Sigma_{\e,t}(r,X_\e^{t,x}(r))\right)\Vert_{\mathcal{L}_2(H)}^2\,dr	}\\[18pt]
&\ds{+c\int_0^s\mathbb{E}\Vert J_m e^{(s-r)A} \Sigma_{\e,t}(r,X_\e^{t,x}(r))-e^{(s-r)A}\Sigma_{\e,t}(r,X_\e^{t,x}(r))\Vert^2_{\mathcal{L}_2(H)}\,dr}\\[18pt]
&\ds{+c\int_0^s \mathbb{E} \Vert e^{(s-r)A}\Sigma_{\e,t}(r,X_\e^{t,x}(r))S_m\Vert^2_{\mathcal{L}_2(H)}\,dr=:\sum_{i=1}^6 I_{m,i}^\e(s),}
\end{array}\]
where $S_m:=I-P_m$ is the projection of $H$ onto $\text{span}\{e_{m+1}, e_{m+2},\ldots\}$.

By proceeding as in Section \ref{sec-spde}, we have that
\[I_{m,4}^\e(s)\leq c(\e)\int_0^s((s-r)\wedge 1)^{-\frac 12}\left(1+((t-r)\wedge 1)^{-2\varrho}\right)\mathbb{E}\Vert \rho_{\e,m}(r)\Vert_H^2\,dr,\]
so that 
\[\mathbb{E}\Vert \rho_{\e,m}(s)\Vert^2_H\leq c(\e)\int_0^s((s-r)\wedge 1)^{-(2\varrho+\frac 12)}\mathbb{E}\Vert \rho_{\e,m}(r)\Vert_H^2\,dr+\Lambda_{\e,m}(s),\]
where
\[\Lambda_{\e,m}(s):=I_{m,1}^\e(s)+I_{m,3}^\e(s)+I_{m,5}^\e(s)+I_{m,6}^\e(s).\]
Since $2\varrho+1/2<1$,  thanks to a generalized Gronwall's inequality (see \cite{Ye}), this implies that
\[\mathbb{E}\Vert \rho_{\e,m}(s)\Vert^2_H\leq c_{\e,t} \,\Lambda_{\e,m}(s)\leq c_{\e,t} \,\Lambda_{\e,m}(t),\\ \ \ \ s \in\,[0,t],\]
and \eqref{109-bis} follows if we can prove that
\begin{equation}
\label{120}
\lim_{m\to\infty} \Lambda_{\e,m}(t)=0. 	
\end{equation}

It is immediate to check that
\begin{equation}
\label{116}
\lim_{m\to\infty} I_{m,1}^\e+I_{m,3}^\e(t)=0.	
\end{equation}
Moreover, according to Hypothesis \ref{H3} and to the fact that $u$ is bounded in $[0,t]\times H$, for every $m \in\,\mathbb{N}$ we have
 \[\begin{array}{l}
\ds{\Vert J_m e^{(t-r)A} \Sigma_{\e,t}(r,X_\e^{t,x}(r))-e^{(t-r)A} \Sigma_{\e,t}(r,X_\e^{t,x}(r))\Vert_{\mathcal{L}_2(H)}}\\[18pt]
 \ds{\leq 2\,\Vert e^{(t-r)A} \sigma(X_\e^{t,x}(r),u_\e(t-r,X_\e^{t,x}(r))\Vert_{\mathcal{L}_2(H)}\leq c\,(t-r)^{-\frac 14}\left(1+\Vert X_\e^{t,x}(r)\Vert_H\right).}	
\end{array}
\]	
Then, 
since 
\[\lim_{m\to\infty} \Vert J_m e^{(t-r)A} \Sigma_{\e,t}(r,X_\e^{t,x}(r))-e^{(t-r)A} \Sigma_{\e,t}(r,X_\e^{t,x}(r))\Vert=0,\]
and since 
the mapping 
\[s \in\,[0,t]\mapsto (t-s)^{-\frac 14}\left(1+\Vert X_\e^{t,x}(s)\Vert_H\right) \in\,\mathbb{R},\] belongs to $L^2(\Omega;L^2([0,t]))$, by the dominated convergence theorem we have that 
\begin{equation}
\label{117}
\lim_{m\to\infty} I_{m,5}^\e(t)=0.	\end{equation}
In the same way, by the dominated convergence theorem we have also that
\begin{equation}
\label{118}
\lim_{m\to\infty} I_{m,6}^\e(t)=0.	\end{equation}
Therefore, combining together \eqref{116}, \eqref{117} and \eqref{118}, we obtain \eqref{120} and \eqref{109-bis} follows.
\end{proof}

\section{Existence and uniqueness of global classical solutions for the quasi-linear problem}

In Theorem \ref{lemma1} we have proved that  for every $\eta>1/2$ and $\vartheta \in\,(0,(\eta-1/2)\wedge 1)$,  there exist $\delta_1, T_1>0$ such that problem \eqref{quasi-linear-bis} has a mild solution $u_\e$ in $C_{\e,\varrho, \eta}((0,T_1];C^{2+\vartheta}_b(H))$. In Section \ref{ssec4.2} we have shown that such mild solution is in fact a classical solution. Our purpose here is first proving that $u_\e$ is defined on the \ interval $[0,T]$, for every $T>0$,  and then proving that it is the unique solution.

We start with the following a-priori bound.

\begin{Lemma}
There exists $\delta_2 \in\,(0,\delta_1]$, that depends only on $\Vert g\Vert_\eta$, such that if $u_\e$  is a mild solution of \eqref{quasi-linear-bis} for some $\delta\leq \delta_2$, belonging to $C_{\e,\varrho, \eta}((0,T];C^{2+\vartheta}_b(H))$, then 
\begin{equation}
\label{122}	
\Vert u\Vert_{\e, \varrho, \eta, \vartheta, T}\leq c_{\e}\,\Vert g\Vert_\eta,\ \ \ \ \ \e \in\,(0,1),
\end{equation}
for some constant $c_\e$ independent of $T>0$.
	
\end{Lemma}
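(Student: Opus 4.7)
The approach is to bootstrap from the maximum principle \eqref{113}, which gives $\sup_{t\in[0,T]}\Vert u_\e(t,\cdot)\Vert_0\leq\Vert g\Vert_0$ uniformly in $T$, to a uniform-in-$T$ bound on $R:=\Vert u_\e\Vert_{\e,\varrho,\eta,\vartheta,T}$. From the mild formulation $u_\e(t)=R^\e_tg+\Gamma_{\e,\delta}(u_\e)(t)$, the contribution of the linear part is bounded by $c\Vert g\Vert_\eta$ uniformly in $T$ via \eqref{71-bis}, so the task reduces to controlling $\Gamma_{\e,\delta}(u_\e)$.

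For this, I would re-run the estimates of Lemma \ref{lemma4.4} and Remark \ref{remark5.5}, but now systematically retaining the exponential decay factor $e^{-\omega(t-s)}$ provided by the Ornstein--Uhlenbeck estimates \eqref{main1}, \eqref{maineps} and \eqref{main3}. The resulting kernels $\int_0^t((t-s)\wedge 1)^{-\beta}e^{-\omega(t-s)}(s\wedge 1)^{-\gamma}\,ds$ with $\beta,\gamma<1$ are bounded uniformly in $t\geq 0$, yielding $T$-uniform estimates on the three seminorm components of $\Vert\cdot\Vert_{\e,\varrho,\eta,\vartheta,T}$, namely on $[\Gamma_{\e,\delta}(u_\e)(t)]_\eta$ and on the weighted derivatives $\e^\varrho(t\wedge 1)^\varrho\Vert D\Gamma_{\e,\delta}(u_\e)(t)\Vert_\vartheta$ and $\e^{\varrho+1/2}(t\wedge 1)^{\varrho+1/2}\Vert D^2\Gamma_{\e,\delta}(u_\e)(t)\Vert_\vartheta$. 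The $T$-growth in \eqref{74} arose precisely from the $\Vert\cdot\Vert_0$ component of $\Vert\cdot\Vert_\eta$, on which the OU semigroup is only a contraction; this component is now handled separately by the maximum principle.

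The delicate point is the linear-in-$u_\e$ contribution $c\Vert Du_\e(s,\cdot)\Vert_\vartheta$ to $\Vert\gamma_{\e,\delta}(u_\e,s)\Vert_\vartheta$ coming from the drift $\langle b(x),Du_\e(s,x)\rangle_H$: it carries no factor of $\delta$, and the naive bound $\Vert Du_\e(s)\Vert_\vartheta\leq\e^{-\varrho}(s\wedge 1)^{-\varrho}R$ inherited from the definition of the norm would yield, in the inequality for $[\Gamma_{\e,\delta}(u_\e)(t)]_\eta$, a term of order $c\e^{-1/2}R$ which cannot be absorbed for small $\e$. To bypass this, I would combine the max-principle bound $\Vert u_\e(s)\Vert_0\leq\Vert g\Vert_0$ with the interpolation inequalities \eqref{1}, \eqref{3} and \eqref{4}, rewriting $\Vert Du_\e(s,\cdot)\Vert_\vartheta$ in the form $c\Vert g\Vert_0^{1/(2+\vartheta)}\,\e^{-\kappa}(s\wedge 1)^{-\kappa}R^\alpha$ with $\alpha=(1+\vartheta)/(2+\vartheta)<1$ and $\kappa<1$. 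Since $\alpha<1$, Young's inequality converts this sub-linear contribution into $\tfrac12 R$ plus a constant depending only on $\Vert g\Vert_0$ and $\e$.

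Assembling the pieces gives $R\leq c_\e\Vert g\Vert_\eta+c_\e\delta R(1+R)+\tfrac12 R$ with $c_\e$ independent of $T$, equivalently $R\leq 2c_\e\Vert g\Vert_\eta+2c_\e\delta R(1+R)$. Starting from the local mild solution provided by Theorem \ref{lemma1} and applying a standard continuity/bootstrap argument, one selects $\delta_2\in(0,\delta_1]$ depending only on $c_\e\Vert g\Vert_\eta$ so that $2c_\e\delta(1+R)\leq\tfrac12$ on $\{R\leq 8c_\e\Vert g\Vert_\eta\}$; this closes the bootstrap and yields \eqref{122}. The main obstacle is precisely the handling of the $\langle b,Du_\e\rangle$ term: without the interpolation reduction based on the maximum principle, no choice of $\delta$ produces a coefficient of $R$ strictly less than one in the iterated estimate, and smallness of $\delta$ alone does not suffice.
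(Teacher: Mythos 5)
Your overall strategy is the right one, and it matches the paper's in its two main planks: use the maximum principle \eqref{113} to control the $\Vert\cdot\Vert_0$ component uniformly in $T$ and retain the exponential decay in the Ornstein--Uhlenbeck estimates to make the H\"older-seminorm components $T$-uniform, and then use an interpolation inequality to tame the $b$-term $c\Vert D u_\e(s,\cdot)\Vert_\vartheta$, which carries no factor of $\delta$. The paper does exactly this for the drift term: it bounds $\Vert\gamma_2(u_\e)(s)\Vert_1\le c(\Vert Du_\e(s,\cdot)\Vert_0+\Vert D^2u_\e(s,\cdot)\Vert_0)$, then invokes \eqref{4} and \eqref{7} to write, for any $\a>0$,
\[
\Vert\gamma_2(u_\e)(s)\Vert_1\le \a\,[D^2u_\e(s,\cdot)]_\vartheta+\kappa_\a\Vert u_\e(s,\cdot)\Vert_0\le \a\,[D^2u_\e(s,\cdot)]_\vartheta+\kappa_\a\Vert g\Vert_0,
\]
which is the same Peter--Paul/Young device you propose (your interpolation exponents on $\Vert g\Vert_0$ and on $[D^2u_\e]_\vartheta$ in \eqref{4} are actually swapped, but both are strictly between $0$ and $1$, so this is inconsequential).

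The genuine gap is in your treatment of the trace (\ $\delta$\ ) term. You propose to ``re-run the estimates of Lemma \ref{lemma4.4}'' keeping the exponential factors; but Lemma \ref{lemma4.4} only uses $\Vert v(s,\cdot)\Vert_\vartheta\le R$ and therefore produces the quadratic coefficient $\delta R(1+R)$. Your closing inequality $R\le 2c_\e\Vert g\Vert_\eta+2c_\e\delta R(1+R)$ then forces $\delta_2\le 1/\bigl(4c_\e(1+8c_\e\Vert g\Vert_\eta)\bigr)$, which depends on $\e$ through $c_\e$. This contradicts the statement to be proved, which requires $\delta_2$ to depend only on $\Vert g\Vert_\eta$ and, crucially, to be uniform in $\e\in(0,1)$; uniformity in $\e$ is essential for the subsequent small-noise analysis, since one must be able to fix $\delta$ once and for all and then let $\e\downarrow 0$. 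The paper avoids the quadratic term by linearizing the $\delta$-term before integrating in time: it uses the maximum principle again inside $F$, giving $\Vert F(u_\e(s,\cdot))\Vert_0\le c(1+\Vert g\Vert_0)$ and $[F(u_\e(s,\cdot))]_\vartheta\le c(1+\Vert g\Vert_0+[u_\e(s,\cdot)]_\vartheta)$, and then invokes the interpolation inequality \eqref{6},
\[
[u_\e(s,\cdot)]_\vartheta\,\Vert D^2u_\e(s,\cdot)\Vert_0\le c\,\Vert u_\e(s,\cdot)\Vert_0\,[D^2u_\e(s,\cdot)]_\vartheta\le c\,\Vert g\Vert_0\,[D^2u_\e(s,\cdot)]_\vartheta,
\]
to replace the problematic product $[u_\e]_\vartheta\Vert D^2u_\e\Vert_0$ by $\Vert g\Vert_0\,[D^2u_\e]_\vartheta$. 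The net effect is the \emph{linear}-in-$N_\e$ bound \eqref{132}, $N_\e(\Gamma^\e_{\delta,1}(u_\e)(t))\le c\,\delta(1+\Vert g\Vert_0)\sup_sN_\e(u_\e(s,\cdot))$, with $c$ independent of $\e$ and $T$. Combined with the absorbed $b$-term (choosing $\a=\delta\e^{(1+\vartheta)/2}$), the a-priori inequality becomes linear, $R\le c_\e\Vert g\Vert_\eta+c\,\delta(2+\Vert g\Vert_0)R$, so one may take $\delta_2$ with $c\,\delta_2(2+\Vert g\Vert_0)<1/2$, no continuity/bootstrap argument is needed, and $\delta_2$ depends only on $\Vert g\Vert_0\le\Vert g\Vert_\eta$. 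Without the second application of the maximum principle inside $F$ and the interpolation \eqref{6}, your argument does not yield the stated $\e$-uniformity of $\delta_2$.
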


\begin{proof}
In what follows, for any function $v:[0,T]\times H\to\mathbb{R}$ we define
\[N_\e(v(t)):=[ v(t,\cdot)]_\eta+\e^{\varrho}(t\wedge 1)^{\varrho} \Vert D v(t,\cdot)\Vert_\vartheta+\e^{\varrho+\frac 12}(t\wedge 1)^{\varrho+\frac 12}\Vert D^2v(t,\cdot)\Vert_\vartheta.	\]
With the notations  we have introduced in Section \ref{sec4}, thanks to \eqref{113} we have
\begin{equation}
\label{130}	
\Vert u_\e(t,\cdot)\Vert_0+N_\e(u_\e(t))\leq \Vert g\Vert_0+N_\e(R^\e_t g)+N_\e(\Gamma^\e_{\delta,1}(u_\e)(t))+N_\e(\Gamma^\e_{2}(u_\e)(t)),\end{equation}
where
\[\begin{array}{ll}
\ds{\Gamma^\e_{\delta,1}(u)(t,x)} & \ds{:=\int_0^tR^\e_{t-s}\,\gamma^\e_{\delta,1}(u,s)(x)\,ds:=\frac \e2\int_0^tR^\e_{t-s}\text{Tr}\left[\delta F(u)(s,\cdot)D^2u(s,\cdot)\right](x)\,ds}\end{array}
\]
and
\[\begin{array}{l}
\ds{\Gamma^\e_{2}(u)(t,x):=\int_0^tR^\e_{t-s}\gamma_2(u,s)(x)\,ds:=\int_0^tR^\e_{t-s}\langle b,D u(s,\cdot)\rangle_H(x)\,ds.}
\end{array}
\]
In \eqref{71-bis} we have already shown that
\begin{equation}
\label{131}
\sup_{t \in\,[0,T]}N_\e(R^\e_tg)\leq c\,\Vert g\Vert_\eta.	
\end{equation}
Thus, in order to prove \eqref{122} we need to estimate
$N_\e(\Gamma^\e_{\delta,1}(u_\e)(t))$ and $N_\e(\Gamma^\e_{2}(u_\e)(t))$. 

Thanks to \eqref{26} and \eqref{113}, we have
\[\Vert F(u_\e(s,\cdot))\Vert_0\leq c\left(1+\Vert u_\e(s,\cdot)\Vert_0\right)\leq c\left(1+\Vert g\Vert_0\right),\] 
and then
\begin{equation}
\label{123}
\begin{array}{l}
\ds{\Vert \gamma^\e_{\delta,1}(u_\e,s)\Vert_0\leq c\,\e\,\delta \Vert F(u_\e(s,\cdot))\Vert_0\Vert D^2 u_\e(s,\cdot)\Vert_0\leq c\,\e\,\delta\left(1+\Vert g\Vert_0\right)\Vert D^2 u_\e(s,\cdot)\Vert_0.}
\end{array}
	\end{equation}
Moreover, due to \eqref{27} and \eqref{113} we have
\[[F(u_\e(s,\cdot))]_\vartheta\leq c\left(1+\Vert u_\e(s,\cdot)\Vert_0+[u_\e(s,\cdot)]_\vartheta\right)\leq c\left(1+\Vert g\Vert_0+[u_\e(s,\cdot)]_\vartheta\right),\]
so that
\[\begin{array}{ll}
\ds{[\gamma^\e_{\delta,1}(u_\e,s)]_\vartheta}& \ds{\leq c\,\e\,\delta\,[ F(u_\e(s,\cdot))]_\vartheta\Vert D^2 u_\e(s,\cdot)\Vert_0+c\,\e\,\delta\Vert F(u_\e(s,\cdot))\Vert_0[D^2u_\e(s,\cdot)]_\vartheta}	\\[18pt]
&\ds{\leq c\,\e\,\delta\left(1+\Vert g\Vert_0\right)\Vert D^2 u_\e(s,\cdot)\Vert_\vartheta+ c\,\e\,\delta\,[u_\e(s,\cdot)]_\vartheta\Vert D^2 u_\e(s,\cdot)\Vert_0.}
\end{array}
	\]
According to \eqref{6} and \eqref{113}, this implies
\begin{equation}
\label{124}
\begin{array}{ll}
\ds{[\gamma^\e_{\delta,1}(u_\e,s)]_\vartheta} & \ds{\leq c\,\e\,\delta\left(1+\Vert g\Vert_0\right)\Vert D^2 u_\e(s,\cdot)\Vert_\vartheta+c\,\e\,\delta\,\Vert u_\e(s,\cdot)\Vert_0[ D^2 u_\e(s,\cdot)]_\vartheta}\\[18pt]
&\ds{\leq c\,\e\,\delta\left(1+\Vert g\Vert_0\right)\Vert D^2 u_\e(s,\cdot)\Vert_\vartheta.}
	\end{array}
\end{equation}
Therefore, if we combine together \eqref{123} and \eqref{124} we conclude that
\[
\begin{array}{ll}
\ds{\Vert \gamma^\e_{\delta,1}(u_\e,s)\Vert_\vartheta} & \ds{\leq c\,\e\,\delta\left(1+\Vert g\Vert_0\right)\Vert D^2 u_\e(s,\cdot)\Vert_\vartheta}\\[14pt]
&\ds{\leq c\,\e\,\delta\left(1+\Vert g\Vert_0\right)\e^{-(\varrho+\frac 12)}(s\wedge 1)^{-(\varrho+\frac 12)}\,N_\e(u_\e(s,\cdot)).}
\end{array}
	\]
By proceeding as in the proof of Lemma \ref{lemma4.4} (see also Remark \ref{remark5.5}), this allows to conclude
\begin{equation}
\label{132}
N_\e(\Gamma^\e_{\delta,1}(u_\e)(t))\leq c\,\delta\left(1+\Vert g\Vert_0\right)\sup_{s \in\,[0,t]}N_\e(u_\e(s,\cdot)),\ \ \ \ t \in\,[0,T].	
\end{equation}

Now, let us estimate $N_\e(\Gamma^\e_2(u_\e)(t))$.
We have
\[ \Vert\gamma_2(u_\e)(t)\Vert_{1}\leq c\,\left(\Vert Du_\e(t,\cdot)\Vert_0+\Vert D^2u_\e(t,\cdot)\Vert_0\right).\]
Thus, according to \eqref{4} and \eqref{7}, thanks to \eqref{113} for every $\a>0$ there exists $\kappa_\a>0$ such that

\begin{equation}
\label{141}
\Vert\gamma_2(u_\e)(t)\Vert_{1}\leq \a\,[D^2u_\e(s,\cdot)]_\vartheta+\kappa_\a \Vert u_\e(s,\cdot)\Vert_0.
\end{equation}
In view of \eqref{65} and \eqref{141}, there exists some $\lambda_\vartheta>0$ such that for every $t \in\,[0,T]$
\begin{equation}
\label{144}	
\begin{array}{l}
\ds{N_\e(\Gamma^\e_2(u)(t))\leq \a\int_0^t e^{-\la_\vartheta(t-s)}[D^2 u_\e(s,\cdot)]_\vartheta\,ds}\\[14pt]
\ds{+\a\,\e^{\varrho}\,(t\wedge 1)^\varrho \int_0^t e^{-\la_\vartheta(t-s)}\e^{-\frac \vartheta 2}((t-s)\wedge 1)^{-\frac\vartheta 2}[D^2 u_\e(s,\cdot)]_\vartheta\,ds}\\[14pt]
\ds{ +\a\,\e^{\varrho +\frac 12}(t\wedge 1)^{\varrho +\frac 12}\int_0^t e^{-\la_\vartheta(t-s)}\e^{-\frac{1+\vartheta} 2}((t-s)\wedge 1)^{-\frac{1+\vartheta} 2}[D^2 u_\e(s,\cdot)]_\vartheta\,ds}\\[14pt]
\ds{+\kappa_\a\int_0^t e^{-\la_\vartheta(t-s)}\e^{-\frac{1+\vartheta} 2}((t-s)\wedge 1)^{-\frac{1+\vartheta} 2}\,ds\Vert g\Vert_0}\\[14pt]
\ds{\leq c\,\a \e^{-\frac{1+\vartheta}2}\sup_{s \in\,[0,t]}N_\e(u_\e(s,\cdot))+c\,\kappa_\a \e^{-\frac{1+\vartheta}2}\,\Vert g\Vert_0.}
\end{array}
\end{equation}
 Hence,  if we plug \eqref{131}, \eqref{132} and \eqref{144} into \eqref{130}, we obtain
\[\begin{array}{l}
\ds{	\Vert u_\e(t,\cdot)\Vert_\eta+N_\e(u_\e(t,\cdot))}\\[14pt]
\ds{\leq c\,\Vert g\Vert_\eta+c\left[\delta\left(1+\Vert g\Vert_0\right)+\a\e^{-\frac{1+\vartheta}2}\right]\sup_{s \in\,[0,t]}N_\e(u_\e(s,\cdot))+c\,\kappa_\a \e^{-\frac{1+\vartheta}2}\,\Vert g\Vert_0.}
\end{array}\]
In particular if take  $\delta_2\leq \delta_1$ such that 
\[c\,\delta_2\left(2+\Vert g\Vert_0\right)<1/2,\] and $\a=\delta \e^{\frac{1+\vartheta}2}$, we obtain \eqref{122} for every $\delta\leq \delta_2$.

\end{proof}

\subsection{Conclusion of the proof of Theorem \ref{teo1}}

Thanks to \eqref{122}, by standard arguments we have that for every $\e \in\,(0,1)$ the local solution we found in Theorem \ref{lemma1} is in fact a global solution.
Moreover, this global solution is  unique. Actually, if $u_1, u_2  \in\,C_{\e,\varrho, \eta}((0,T];C^{2+\vartheta}_b(H))$ are two solutions of equation \eqref{quasi-linear-tris}, for some fixed $\delta\leq \delta_2$, we assume that 
\[t_0:=\sup\left\{t \in\,(0,T]\ :\ u_1(s)=u_2(s),\ s \in\,[0,t]\right\}<T.\]
With the same notations we have used in Section \ref{sec4}, we introduce the problem
\begin{equation}
\label{150}
u(t)=\Gamma_{\e,\delta}^{\varphi}(u)(t)=R^\e_t\varphi+\Gamma_{\e,\delta}(u)(t),\ \ \ \ \ t\geq t_0,	
\end{equation}
where $\varphi:=u_1(t_0)=u_2(t_0)$. 
Due to \eqref{122}, we have that 
\[\Vert \varphi\Vert_\eta\leq c_{\e,\delta}\,\Vert g\Vert_\eta,\] 
for some constant $c_{\e,\delta}>0$ independent of $T>0$.

As shown in Section \ref{sec4}, there exist $\bar{R}$, $\bar{\tau}>0$ and $\bar{\delta}\leq \delta_2$ such that the mapping $\Gamma_{\e,\delta}^{\varphi}$ maps $\mathcal{Y}^{\e,\bar{R}}_{\varrho,\eta,\vartheta,t_0,\bar{\tau}}$ into itself as a contraction, for every $\delta\leq \bar{\delta}$, where
\[\mathcal{Y}^{\e,R}_{\varrho,\eta,\vartheta,t_0,\bar{\tau}}:=\left\{\,u \in\,C_{\e,\varrho,\eta}((t_0,t_0+\bar{\tau}];C^{2+\vartheta}_b(H))\ :\ \Vert u\Vert_{\e, \varrho, \eta,\vartheta, t_0,\bar{\tau}}\leq \bar{R}\right\},\]
and $C_{\e, \varrho, \eta}((t_0,t_0+\bar{\tau}];C^{2+\vartheta}_b(H))$ is the space of all functions $ u$ belonging to  $C([t_0,t_0+\bar{\tau}];C^\eta_b(H))\cap C((t_0,t_0+\bar{\tau}];C^{2+\vartheta}_b(H))$ such that the norm
\[\begin{array}{l}
\ds{\Vert u\Vert_{\e,\varrho,\eta, \vartheta, t_0,\bar{\tau}}}\\[14pt]
\ds{:=\sup_{t \in\,(t_0,t_0+\bar{\tau}]}\left(\Vert u(t,\cdot)\Vert_\eta+\e^{\varrho}((t-t_0)\wedge 1)^{\varrho}\Vert D_xu(t,\cdot)\Vert_{\vartheta}+\e^{\varrho+\frac 12}((t-t_0)\wedge 1)^{\varrho+\frac 12}\Vert D^2_xu(t,\cdot)\Vert_\vartheta\right)	}
\end{array}\]
is finite.

 In particular  $\Gamma_{\e,\delta}^{\varphi}$ has a unique fixed point in $\mathcal{Y}^{\e,\bar{R}}_{\varrho,\eta,\vartheta,t_0,\bar{\tau}}$ or, equivalently, equation \eqref{150} has a unique solution on the interval $[t_0,t_0+\bar{\tau}]$. This implies that 
 \[u_1(s)=u_2(s),\ \ \ \ \ s \in\,[0,t_0+\bar{\tau}],\]
 violating the definition of $[0,t_0]$ as the maximal interval where $u_1$ and $u_2$ coincide.

\section{The large deviation principle}
\label{sec8}
In this last section we give a proof of Theorem \ref{teo3}. We follow the well-known method based on weak convergence, as developed in \cite{bdm}. To this purpose, we need to introduce some notations.

For every $t>0$, we denote by $\mathcal{P}_t$ the set of predictable processes in $L^2(\Omega\times [0,t];H)$, and for every $M>0$ we introduce the sets
\[\mathcal{S}_{t,M}:=\left\{ \varphi \in\,L^2_w(0,t;H)\ :\ \Vert \varphi\Vert_{L^2(0,t;H)}\leq M\right\},\]
and
\[\Lambda_{t,M}:=\left\{ \varphi \in\,\mathcal{P}_t\ :\ \varphi \in \mathcal{S}_{t,M},\ \mathbb{P}-\text{a.s.}\right\}.\]

In Theorem \ref{teo2} we have shown that  for  every  $M, t>0$ and $\varphi \in\,\Lambda_{t,M}$ and for every $x \in\,H$ and $\e \in\,(0,1)$ there exists a unique mild solution $X^{t, x}_{\varphi,\e} \in\,L^2(\Omega;C([0,t];H))$ for equation \eqref{stoch-pde}.

Next, we consider the  problem 
\begin{equation}
\label{sbm2-bis}
\frac {dX}{ds}(s)=A	X(s)+b(X(s))+\sigma(X(s),g(Z^{X(s)}(t-s)))\varphi(s),\ \ \ \ X(0)=x,	\end{equation}
where, as we did in Section \ref{sec3}, for every $y \in\,H$ we denote by $Z^y$ the solution of equation \eqref{sbm5}.
In what follows, we show that the following result holds.
\begin{Proposition}
\label{prop1}
Assume that $g:H\to\mathbb{R}$ is Lipschitz-continuous. Then, under the same assumptions of Theorem \ref{teo2}, for  every  $t>0$ and $\varphi \in\,L^2(0,t;H)$ and for every $x \in\,H$, there exists a unique mild solution $X^{t, x}_{\varphi} \in\,C([0,t];H)$ for equation \eqref{sbm2-bis}.	
\end{Proposition}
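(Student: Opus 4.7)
The strategy is to mimic the deterministic analogue of the fixed-point argument used in Section \ref{sec-spde} for the stochastic equation \eqref{stoch-pde-bis}. More precisely, rewrite \eqref{sbm2-bis} in mild form
\[
X(s) = e^{sA}x + \int_0^s e^{(s-r)A}b(X(r))\,dr + \int_0^s e^{(s-r)A}\Psi_{t,r}(X(r))\,\varphi(r)\,dr, \qquad s \in [0,t],
\]
where
\[
\Psi_{t,r}(y) := \sigma(y,\,g(Z^y(t-r))), \qquad y \in H,\ r \in [0,t],
\]
and exhibit the right-hand side as a contraction on $C([0,t];H)$ endowed with a suitable weighted sup-norm.

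The first step is to obtain the Lipschitz dependence of the coefficient on $X$. From the mild formula $Z^y(\tau) = e^{\tau A}y + \int_0^\tau e^{(\tau-r)A}b(Z^y(r))\,dr$, Hypothesis \ref{H2}-(1) giving $\Vert e^{\tau A}\Vert_{\mathcal{L}(H)}\leq M$ on $[0,t]$, the Lipschitz continuity of $b$ from Hypothesis \ref{H4}, and Gronwall's lemma, one deduces the existence of a constant $L^\star = L^\star(t, L_b, M) > 0$ such that
\[
\sup_{\tau \in [0,t]}\Vert Z^{y_1}(\tau) - Z^{y_2}(\tau)\Vert_H \leq L^\star\,\Vert y_1 - y_2\Vert_H, \qquad y_1, y_2 \in H.
\]
Composing with the Lipschitz continuous function $g$ and using the Lipschitz continuity of $\sigma$ from Hypothesis \ref{H1}-(1), it follows that there exists $L_\Psi > 0$, independent of $r \in [0,t]$, such that
\[
\Vert \Psi_{t,r}(y_1) - \Psi_{t,r}(y_2)\Vert_{\mathcal{L}(H)} \leq L_\Psi\,\Vert y_1 - y_2\Vert_H, \qquad y_1, y_2 \in H.
\]
Moreover the linear growth of $\sigma$ together with the boundedness of $b$ yield $\Vert \Psi_{t,r}(y)\Vert_{\mathcal{L}(H)} \leq c(1 + \Vert y\Vert_H)$ uniformly in $r$.

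Denote by $\Gamma_{t,x,\varphi}$ the mapping sending $X \in C([0,t];H)$ to the right-hand side of the mild equation. The continuity of $s \mapsto \Gamma_{t,x,\varphi}(X)(s)$ follows from the strong continuity of $e^{sA}$ together with the fact that $r \mapsto \Psi_{t,r}(X(r))\varphi(r)$ belongs to $L^2(0,t;H)$ (since $X$ is bounded on $[0,t]$ and $\varphi \in L^2(0,t;H)$). For $\beta > 0$ set
\[
\Vert X\Vert_\beta := \sup_{s \in [0,t]} e^{-\beta s}\Vert X(s)\Vert_H,
\]
which is a norm on $C([0,t];H)$ equivalent to the usual one. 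Given $X_1, X_2 \in C([0,t];H)$, using \eqref{67} and the Lipschitz bounds derived above,
\[
\Vert \Gamma_{t,x,\varphi}(X_1)(s) - \Gamma_{t,x,\varphi}(X_2)(s)\Vert_H \leq M L_b\int_0^s e^{-\omega(s-r)}\Vert X_1(r)-X_2(r)\Vert_H\,dr
\]
\[
+\,M L_\Psi \int_0^s e^{-\omega(s-r)}\Vert X_1(r)-X_2(r)\Vert_H\,\Vert\varphi(r)\Vert_H\,dr.
\]
Multiplying by $e^{-\beta s}$ and applying Cauchy--Schwarz to the second term yields
\[
\Vert \Gamma_{t,x,\varphi}(X_1) - \Gamma_{t,x,\varphi}(X_2)\Vert_\beta \leq \left(\frac{M L_b}{\beta} + \frac{M L_\Psi\,\Vert \varphi\Vert_{L^2(0,t;H)}}{\sqrt{2\beta}}\right)\Vert X_1 - X_2\Vert_\beta.
\]
Choosing $\beta$ sufficiently large makes the bracket strictly less than $1$; hence $\Gamma_{t,x,\varphi}$ is a contraction on $(C([0,t];H),\Vert\cdot\Vert_\beta)$ and its unique fixed point $X^{t,x}_\varphi$ is the required mild solution.

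The only delicate point is handling the control term, since $\varphi$ is merely square-integrable. A plain Gronwall argument on the sup-norm would fail; instead, one must use the weighted norm $\Vert\cdot\Vert_\beta$ combined with Cauchy--Schwarz to produce the decaying factor $\Vert\varphi\Vert_{L^2}/\sqrt{\beta}$. This is precisely the deterministic counterpart of the argument used in Section \ref{sec-spde} for the stochastic equation \eqref{stoch-pde-bis}, and is in fact simpler because neither the stochastic integral nor the need to control an $L^2(\Omega)$-moment are present.
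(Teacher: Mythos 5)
Your proof is correct and takes essentially the same route as the paper: rewrite \eqref{sbm2-bis} in mild form, combine the Lipschitz estimate \eqref{sbm42} on $Z^y$ with the Lipschitz continuity of $g$ and $\sigma$ to obtain a uniform (in $r$) Lipschitz bound on the coefficient, and conclude by a fixed-point argument. The exponentially weighted norm $\Vert\cdot\Vert_\beta$ you introduce to turn the resulting Gronwall-type integral inequality into a genuine contraction is exactly the content of the ``standard arguments'' the paper invokes at the end of its proof.
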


Once proved Theorem \ref{teo2} and Proposition \ref{prop1}, 
we introduce the following two conditions.
\begin{enumerate}

\item[C1.] Let $\{\varphi_\epsilon\}_{\epsilon>0}$ be an arbitrary family of processes in $ \Lambda_{t,M}$ such that 
\[\lim_{\e\to 0} \varphi_\e=\varphi,\ \ \ \ \text{in distribution in}\ \ \ L_w^2(0,t;H),\]
where $L^2_w(0,t;H)$ is the space $L^2(0,t;H)$ endowed with the weak topology and  $\varphi \in\,\Lambda_{t,M}$. Then we have
\[\lim_{\e\to 0} X^{t,x}_{\varphi_\e,\e}=X^{t,x}_\varphi,\ \ \ \ \text{in distribution}\ \ \ C([0,t],H).\]
\item[C2.] For every $t, R>0$,  the level sets $\Phi_{t,R}=\{I_{t,x}	\leq R\}$ are compact in the space $C([0,t];H)$.

\end{enumerate}

As shown in \cite{bdm}, 
Conditions  C1. and C2. imply that the  family $\{X^{t,x}_\epsilon\}_{\epsilon \in\,(0,1)}$ satisfies a Laplace principle with action functional $I_{t,x}$ in the space $C([0,t];H)$ for the . Due to the compactness of the level sets $\Phi_{t, R}$ stated in C2. this is equivalent to the validity of Theorem \ref{teo3}.

\subsection{Proof of Proposition \ref{prop1}}
For every $y \in\,H$ and $s \in\,[0,t]$  we define
\[\Sigma_t(y,s):=\sigma(y,g(Z^y(t-s))).\]
With this notation, a function in $C([0,t];H)$ is a mild solution for equation \eqref{sbm2-bis} if it is a fixed point of the mapping  $\Lambda_t$ defined for every $X \in\,C([0,t];H)$ by
\[\Lambda_t(X)(s):=e^{sA}x+\int_0^s e^{(s-r)A}b(X(r))\,dr+\int_0^s  e^{(s-r)A}\Sigma_t(X(r),r)\,\varphi(r)\,dr,\ \ \ \ s \in\,[0,t].\]  It is immediate to check that 
there exists a continuous increasing function $\kappa(s)$ such that for every $y_1, y_2 \in\,H$
\begin{equation} \label{sbm42}
\Vert Z^{y_1}(s)-Z^{y_2}(s)\Vert_H\leq \kappa(s)\,\Vert y_1-y_2\Vert_H,\ \ \ \ \ \ s\geq 0.	
\end{equation}
Hence,  since we are assuming that $g:H\to\mathbb{R}$ is Lipschitz-continuous, according to Hypothesis \ref{H1} for every $y_1, y_2, h \in\,H$ we have
\[\Vert [\Sigma_t(y_1,r)-\Sigma_t(y_2,r)]h\Vert_H\leq c\,\left(1+\kappa(t-r)\right)\Vert y_1-y_2\Vert_H\Vert h\Vert_H,\ \ \ \ \ r \in\,[0,t].\]
In particular, for every $X_1, X_2 \in\,C([0,t];H)$ and $s \in\,[0,t]$ we have
\[\begin{array}{ll}
\ds{\Vert \Lambda_t(X_1)(s)-\Lambda_t(X_2)(s)\Vert_H} & \ds{\leq c\,\int_0^s \left(1+\left(1+\kappa(t-r)\right)\,\Vert\varphi(r)\Vert_H\right)\Vert X_1(r)-X_2(r)\Vert_H\,dr}\\[14pt]
&\ds{\leq c_t\left(\Vert \varphi\Vert_{L^2(0,t;H)}+1\right)\Vert X_1-X_2\Vert_{C([0,t];H)}.}	
\end{array}\]
This implies that $\Lambda_t:C([0,t];H)\to C([0,t];H)$ is Lipschitz continuous and by standard arguments we conclude that $\Lambda_t$ has a unique fixed point. 

\subsection{Proof of the validity of Conditions C1 and C2}
It is enough to prove Condition C1. Actually, from the proof of Condition C1 we will see that the mapping
\[\varphi \in\,L^2_w(0,t;H)\mapsto Y^{t,x}_\varphi\]
is continuous and 
\[\Phi_{t,R}=\{I_{t,x}\leq R\}=\{Y^{t,x}_\varphi\ :\ \varphi \in\,\Lambda_{t,c(R)}\},\] for some $c(R)>0$. Therefore,  since the set $\Lambda_{t,M}$ is compact in $L^2_w(0,t;H)$ for every $M>0$,   we conclude that Condition C2 holds.

 In order to prove Condition C1, we first need to prove the following preliminary results
 \begin{Lemma}
 \label{lem1}
 Under the same assumptions of Theorem \ref{teo3}, for every $p\geq 1$ we have
	\begin{equation}
	\label{sbm11}
	\sup_{\e \in\,(0,1)}\mathbb{E}\sup_{s \in\,[0,t]}\Vert X^{t, x}_{\varphi_\e, \e}(s)\Vert_H^p\leq c(t,M,p)\left(1+\Vert x\Vert^p_H\right).	
	\end{equation}
	\end{Lemma}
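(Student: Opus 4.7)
The plan is to estimate each of the four summands in the mild formulation \eqref{35} of $X^{t,x}_{\varphi_\e,\e}$ in $L^p(\Omega; C([0,\tau];H))$ for $\tau \in [0,t]$, with constants independent of $\e \in (0,1)$, and then close via a Gronwall-type inequality. I will first argue for $p$ large (say $p > 4$); the extension to $p \in [1,4]$ follows from H\"older's inequality.

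The key ingredient making all bounds uniform in $\e$ is the maximum principle \eqref{113}, which yields $\sup_{s,x}|u_\e(s,x)| \leq \|g\|_0$. Combined with the Lipschitz continuity of $\sigma$ from Hypothesis \ref{H1}, this gives $\|\Sigma_{t,\e}(r,y)\|_{\mathcal{L}(H)} \leq C(1+\|y\|_H)$, while combined with Hypothesis \ref{H3} it gives $\|e^{\tau A}\Sigma_{t,\e}(r,y)\|_{\mathcal{L}_2(H)} \leq C(\tau\wedge 1)^{-1/4}(1+\|y\|_H)$, with $C$ independent of $\e$. The first two summands are then trivially controlled since $b$ is bounded (Hypothesis \ref{H4}) and $\|e^{sA}\|_{\mathcal{L}(H)} \leq M$ by \eqref{67}, giving $\|e^{sA}x\|_H + \|\int_0^s e^{(s-r)A}b(X(r))\,dr\|_H \leq M\|x\|_H + C\|b\|_0\,t$. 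For the control term, the a.s.\ bound $\|\varphi_\e\|^2_{L^2(0,t;H)} \leq M$ together with Cauchy--Schwarz and Jensen (since $p \geq 2$) yield
\begin{equation*}
\mathbb{E}\sup_{s\in[0,\tau]}\left\|\int_0^s e^{(s-r)A}\Sigma_{t,\e}(r,X(r))\varphi_\e(r)\,dr\right\|_H^p \leq C(M,p,t)\int_0^\tau\left(1+\mathbb{E}\sup_{u\in[0,r]}\|X(u)\|_H^p\right)dr.
\end{equation*}

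For the stochastic convolution, I invoke the Da Prato--Kwapie\'n--Zabczyk factorization method (as in \cite[Subsection 5.3.1]{DPZ}, already used at the end of Section \ref{sec-spde}) with an exponent $\alpha \in (1/p, 1/4)$, which is possible since $p > 4$. Combining the resulting representation with the Burkholder--Davis--Gundy inequality, Hypothesis \ref{H3}, and Fubini yields
\begin{equation*}
\mathbb{E}\sup_{s\in[0,\tau]}\left\|\sqrt{\e}\int_0^s e^{(s-r)A}\Sigma_{t,\e}(r,X(r))\,dW_r\right\|_H^p \leq C(p,t)\,\e^{p/2}\int_0^\tau\left(1+\mathbb{E}\sup_{u\in[0,r]}\|X(u)\|_H^p\right)dr,
\end{equation*}
with constant independent of $\e \in (0,1)$; the factor $\e^{p/2}\leq 1$ is what makes the bound uniform.

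Setting $u_\e(\tau) := \mathbb{E}\sup_{r\in[0,\tau]}\|X^{t,x}_{\varphi_\e,\e}(r)\|_H^p$, collecting the four estimates yields the ordinary integral inequality
\begin{equation*}
u_\e(\tau) \leq C(t,M,p)(1+\|x\|_H^p) + C(t,M,p)\int_0^\tau u_\e(r)\,dr,
\end{equation*}
uniformly in $\e \in (0,1)$, and classical Gronwall's lemma gives \eqref{sbm11}. The main obstacle is the sup-in-$s$ estimate for the stochastic convolution uniformly in $\e$: since the integrand $e^{(s-r)A}\Sigma_{t,\e}(r,X(r))$ couples the outer and inner variables, it is not a martingale in $s$ and direct BDG is unavailable; the factorization trick circumvents this but constrains $p$ to be large, a restriction easily lifted by H\"older. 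A secondary bookkeeping point is to verify that every estimate is bounded by a constant depending only on $\|g\|_0$, $\|b\|_0$, the Lipschitz constant of $\sigma$, the constants in Hypotheses \ref{H2}--\ref{H3}, $M$, $t$ and $p$, but \emph{not} on $\e$ nor on $\varphi_\e$ beyond the $L^2$-size $M$.
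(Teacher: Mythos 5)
Your proposal is correct and follows essentially the same route as the paper's proof: the same term-by-term decomposition of the mild formulation, the same use of the maximum principle \eqref{113} together with Hypothesis \ref{H3} to make the Hilbert--Schmidt bound \eqref{sbm16} uniform in $\e$, the same Da Prato--Kwapie\'n--Zabczyk factorization with $\alpha\in(1/p,1/4)$ (hence $p>4$) for the stochastic convolution, and the same closing by Gronwall. The only cosmetic addition is your explicit remark on deducing $p\in[1,4]$ from $p>4$ by H\"older, which the paper leaves implicit.
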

\begin{proof}
We have
\[\begin{array}{ll}	
\ds{X^{t,x}_{\varphi_\e,\e}(s):=} & \ds{e^{sA}x+\int_0^s e^{(s-r)A}b(X^{t,x}_{\varphi_\e,\e}(r))\,ds+\int_0^s e^{(s-r)A}\Sigma_{t,\e}(r,X^{t,x}_{\varphi_\e,\e}(r))\,\varphi_\e(r)\,dr}\\[14pt]
&\ds{+\sqrt{\e}\int_0^s e^{(s-r)A}\Sigma_{t,\e}(r,X^{t,x}_{\varphi_\e,\e}(r))\,dW(r),}
\end{array}
\]
where $\Sigma_{t,\e}$ is the operator defined in \eqref{sigma}. Hence, for every $s \in\,[0,t]$ and $p\geq 1$ we have
\begin{equation}
\label{sbm15}\begin{array}{ll}
\ds{\Vert X^{t,x}_{\varphi_\e,\e}(s)\Vert_H^p\leq}  &  \ds{ c_p\,\Vert x\Vert_H^p+c_p\int_0^s \Vert X^{t,x}_{\varphi_\e,\e}(r)\Vert_H^p	\,dr+c_{p,M}\left(\int_0^s \Vert e^{(s-r)A}\Sigma_{t,\e}(r,X^{t,x}_{\varphi_\e,\e}(r))\Vert_{\mathcal{L}(H)}^2\,dr\right)^{\frac p2}}\\[14pt]
&\ds{+c_p\left\Vert \int_0^s e^{(s-r)A}\Sigma_{t,\e}(r,X^{t,x}_{\varphi_\e,\e}(r))\,dW(r)\right\Vert_H^p+c_{p,t}.}
\end{array}
	\end{equation}
According to Hypothesis \ref{H3}, for every $\tau>0$, $s \in\,[0,t]$  and $x \in\,H$ we have
\[\Vert e^{\tau A} \Sigma_{t,\e}(s,x)\Vert^2_{\mathcal{L}_2(H)}\leq c\,(\tau\wedge 1)^{-\frac 12}\left(\Vert x\Vert_H^2+|u_\e(t-s,x)|^2+1\right).\]
Moreover, according to \eqref{106}, we have
\[\sup_{(s,x) \in\,[0,t]\times H}|u_\e(s,x)|\leq \Vert g\Vert_0,\ \ \ \ \ \e \in\,(0,1),\]
so that
\begin{equation}
\label{sbm16}	
\sup_{\e \in\,(0,1)}\Vert e^{\tau A} \Sigma_{t,\e}(s,x)\Vert^2_{\mathcal{L}_2(H)}\leq c\,(\tau\wedge 1)^{-\frac 12}\left(\Vert x\Vert_H^2+1\right).
\end{equation}
In particular,
\begin{equation}
\label{sbm30}	
\mathbb{E}\sup_{r \in\,[0,s]}\left(\int_0^s \Vert e^{(s-r)A}\Sigma_{t,\e}(r,X^{t,x}_{\varphi_\e,\e}(r))\Vert_{\mathcal{L}(H)}^2\,dr\right)^{\frac p2}\leq c_{p,t}\,\left(\mathbb{E}\sup_{r \in\,[0,s]}\Vert X^{t,x}_{\varphi_\e,\e}(r)\Vert_H^p+1\right).
\end{equation}

Now, if we fix $p>4$, we can find $\a<1/4$ such that $(\a-1)p/(p-1)>-1$. By using a stochastic factorization argument, we have
\[\begin{array}{l}
\ds{\int_0^s e^{(s-r)A}\Sigma_{t,\e}(r,X^{t,x}_{\varphi_\e,\e}(r))\,dW(r)=c_\a\int_0^s e^{(s-r)A}(s-r)^{\a-1}Y_{\a,\e}(r)\,dr,}	
\end{array}\]
where
\[Y_{\a,\e}(r):=\int_0^re^{(r-\rho)A}(r-\rho)^{-\a}\Sigma_{t,\e}(\rho,X^{t,x}_{\varphi_\e,\e}(\rho))\,dW(\rho).\]
Then,  we obtain
\[\begin{array}{l}
\ds{\left\Vert\int_0^s e^{(s-r)A}\Sigma_{t,\e}(r,X^{t,x}_{\varphi_\e,\e}(r))\,dW(r)\right\Vert^p_H\leq c_{\a,p}\left(\int_0^s (s-r)^{\frac{(\a-1)p}{p-1}}\,dr\right)^{p-1}\int_0^s\Vert Y_{\a,\e}(r)\Vert_H^p\,dr,}
\end{array}\]
so that, thanks to \eqref{sbm16} and to the fact that $\a<1/4$
\begin{equation}
\label{sbm26}
\begin{array}{l}
\ds{\mathbb{E}\sup_{r \in\,[0,s]}\left\Vert\int_0^s e^{(s-r)A}\Sigma_{t,\e}(r,X^{t,x}_{\varphi_\e,\e}(r))\,dW(r)\right\Vert^p_H\leq c_{\a,p,t}\int_0^s\mathbb{E}\Vert Y_{\a,\e}(r)\Vert_H^p\,dr}	\\[14pt]
\ds{\leq c_{\a,p,t}\int_0^s\left(\mathbb{E}\int_0^r (r-\rho)^{-(\frac 12+2\a)}\left(\Vert X^{t,x}_{\varphi_\e,\e}(\rho)\Vert_H^2+1\right)\,d\rho\right)^{\frac p2}\,dr}\\[14pt]
\ds{\leq c_{\a,p,t}\left(\int_0^s \mathbb{E}\sup_{\rho \in\,[0,r]}\Vert X^{t,x}_{\varphi_\e,\e}(\rho)\Vert_H^p\,dr+1\right).}
\end{array}\end{equation}

Therefore, thanks to \eqref{sbm15}, \eqref{sbm30} and \eqref{sbm26}, 
\[\begin{array}{ll}
\ds{\mathbb{E}\sup_{r \in\,[0,s]}\Vert X^{t,x}_{\varphi_\e,\e}(r)\Vert_H^p\leq}  &  \ds{ c_{t,M,p}\left(\Vert x\Vert_H^p+1\right)+c_{p,t}\int_0^s \mathbb{E}\sup_{\rho \in\,[0,r]}\Vert X^{t,x}_{\varphi_\e,\e}(\rho)\Vert_H^p\,dr,}
	\end{array}\]
	and  Gronwall's Lemma allows to conclude.

\end{proof}

\begin{Lemma}
\label{lem2}
 Under the same assumptions of Theorem \ref{teo3}, we have
 \begin{equation}
 \label{sbm40}
 |u_\e(s,x)-g(Z^x(s))|	\leq c_t\,\sqrt{\e}\left(1+\Vert x\Vert_H\right),\ \ \ \ s \in\,[0,t].
 \end{equation}

\end{Lemma}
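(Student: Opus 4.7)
The plan is to exploit the probabilistic representation \eqref{106}, namely $u_\e(s,x)=\mathbb{E}\,g(X_\e^{s,x}(s))$, where $X_\e^{s,x}$ denotes the mild solution of the uncontrolled version of \eqref{stoch-pde} (that is, with $\varphi\equiv 0$). Since $g$ is Lipschitz continuous by assumption, it suffices to bound $\mathbb{E}\Vert X_\e^{s,x}(s)-Z^x(s)\Vert_H$ by $c_t\sqrt{\e}(1+\Vert x\Vert_H)$.

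First I would write the difference $X_\e^{s,x}(\tau)-Z^x(\tau)$ using the mild formulations of both processes:
\[
X_\e^{s,x}(\tau)-Z^x(\tau)=\int_0^\tau e^{(\tau-r)A}\bigl[b(X_\e^{s,x}(r))-b(Z^x(r))\bigr]dr+\sqrt{\e}\int_0^\tau e^{(\tau-r)A}\Sigma_{s,\e}(r,X_\e^{s,x}(r))\,dW_r,
\]
with $\Sigma_{s,\e}$ defined as in \eqref{sigma}. Squaring, taking expectation, and using the Lipschitz continuity of $b$ together with the It\^o isometry gives
\[
\mathbb{E}\Vert X_\e^{s,x}(\tau)-Z^x(\tau)\Vert_H^2\leq c\int_0^\tau \mathbb{E}\Vert X_\e^{s,x}(r)-Z^x(r)\Vert_H^2\,dr+c\,\e\int_0^\tau \mathbb{E}\Vert e^{(\tau-r)A}\Sigma_{s,\e}(r,X_\e^{s,x}(r))\Vert_{\mathcal{L}_2(H)}^2\,dr.
\]

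Next I would estimate the stochastic integral term using the bound \eqref{sbm16} for $\Vert e^{\tau A}\Sigma_{t,\e}(s,x)\Vert_{\mathcal{L}_2(H)}^2$ (derived in the proof of Lemma \ref{lem1} from Hypothesis \ref{H3} and the uniform bound \eqref{113} on $u_\e$), obtaining
\[
\int_0^\tau \mathbb{E}\Vert e^{(\tau-r)A}\Sigma_{s,\e}(r,X_\e^{s,x}(r))\Vert_{\mathcal{L}_2(H)}^2\,dr\leq c\int_0^\tau ((\tau-r)\wedge 1)^{-1/2}\bigl(1+\mathbb{E}\Vert X_\e^{s,x}(r)\Vert_H^2\bigr)dr.
\]
Invoking Lemma \ref{lem1} (with $p=2$, $\varphi_\e\equiv 0$, $M=0$) to bound $\sup_{\e\in(0,1)}\mathbb{E}\sup_{r\in[0,t]}\Vert X_\e^{s,x}(r)\Vert_H^2$ by $c_t(1+\Vert x\Vert_H^2)$, the integral above is dominated by $c_t(1+\Vert x\Vert_H^2)$ uniformly in $\e\in(0,1)$.

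Plugging this back in and applying Gronwall's lemma yields
\[
\mathbb{E}\Vert X_\e^{s,x}(\tau)-Z^x(\tau)\Vert_H^2\leq c_t\,\e\,(1+\Vert x\Vert_H^2),\qquad \tau\in[0,t].
\]
By Jensen's inequality and the Lipschitz continuity of $g$,
\[
|u_\e(s,x)-g(Z^x(s))|\leq [g]_1\,\mathbb{E}\Vert X_\e^{s,x}(s)-Z^x(s)\Vert_H\leq c_t\sqrt{\e}\,(1+\Vert x\Vert_H),
\]
which is \eqref{sbm40}. The only mildly delicate point is ensuring that the a priori moment bound from Lemma \ref{lem1} applies to $X_\e^{s,x}$ (the uncontrolled process) uniformly in $\e$, but this follows directly since $\varphi\equiv 0$ trivially lies in every $\Lambda_{s,M}$.
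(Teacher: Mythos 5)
Your proposal is correct and follows essentially the same path as the paper: it invokes the representation formula \eqref{106}, uses the Lipschitz continuity of $g$, writes the difference $X^{s,x}_\e-Z^x$ in mild form, controls the stochastic convolution via \eqref{sbm16} (Hypothesis \ref{H3} plus the uniform bound on $u_\e$) together with the moment bound of Lemma \ref{lem1}, and closes with Gronwall. The only cosmetic difference is that you estimate second moments and pass to first moments via Jensen at the end, whereas the paper works directly with $\mathbb{E}\Vert\rho_\e\Vert_H$; the two are equivalent.
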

\begin{proof}
Thanks to \eqref{106}, we have
\[u_\e(s,x)-g(Z^x(s))=\mathbb{E}\left(g(X^{s,x}_\e(s))-g(Z^x(s))\right),\]
so that, since we are assuming that $g$ is Lipschitz-continuous
\[|u_\e(s,x)-g(Z^x(s))|\leq c\,\mathbb{E}\,\Vert X^{s,x}_\e(s)-Z^x(s)\Vert_H.\]
Now, if we define $\rho_\e^x(s):=X^{s,x}_\e(s)-Z^x(s)$, we have
\[\begin{array}{ll}
\ds{\rho_\e^x(s)=} & \ds{\int_0^s e^{(s-r)A}\left(b(X^{s,x}_\e(r))-b(Z^x(r))\right)\,dr+\sqrt{\e}\int_0^s e^{(s-r)A}\Sigma_{s,\e}(r,X^{s,x}_\e(r))\,dW(r),}\end{array}\]
where $\Sigma_{s,\e}$ is the operator introduced in \eqref{sigma}.
Due to \eqref{sbm11} and \eqref{sbm26}, we have
\[\mathbb{E}\Vert \rho_\e(s)\Vert_H\leq c\int_0^s \mathbb{E}\Vert \rho_\e(r)\Vert_H\,dr+c_t\,\sqrt{\e}\left(1+\vert x\Vert_H\right),\]
and Gronwall's lemma allows to conclude.
	
\end{proof}
	
Now, we are ready to prove condition C1. Let  $\{\varphi_\epsilon\}_{\epsilon>0}$ be an arbitrary family of processes in $ \Lambda_{t,M}$ converging in distribution, with respect to the weak topology of $L^2(0,t;H)$, to some 
  $\varphi \in\,\Lambda_{t,M}$.  As a consequence of  Skorohod theorem, we can assume that the sequence $\{\varphi_\epsilon\}_{\epsilon>0}$  converges $\mathbb{P}$-a.s. to $\varphi$, with respect to the weak topology of  $L^2(0,t;H)$. We will prove that this implies that 
  \begin{equation}
  \label{sbm45}
  \lim_{\e \to 0}	\sup_{s \in\,[0,t]}\mathbb{E}\Vert X_{\varphi_\e, \e}^{t,x}(s)-X^{t,x}_\varphi(s)\Vert_H^2=0.
  \end{equation}
If we define
	\[\rho_\e(s):=X_{\varphi_\e, \e}^{t,x}(s)-X^{t,x}_\varphi(s),\ \ \ \ s \in\,[0,t],\]
we have
 \begin{equation}
 \label{sbm20}
 \begin{array}{l}
\ds{	\rho_\e(s)=\int_0^s e^{(s-r)A}\left[b(X_{\varphi_\e, \e}^{t,x}(r))-b(X_{\varphi}^{t,x}(r))\right]	\,dr}\\[14pt]
\ds{+\int_0^se^{(s-r)A}\left[\sigma(X_{\varphi_\e, \e}^{t,x}(r),u_\e(t-r,X_{\varphi_\e, \e}^{t,x}(r)))\,\varphi_\e(r)-\sigma(X_{\varphi}^{t,x}(r),g(Z^{X^{t,x}_\varphi(r)}(t-r)))\,\varphi(r)\right]\,dr}\\[14pt]
\ds{+\sqrt{\e}\,\int_0^se^{(s-r)A}\Sigma_{t,\e}(r,X_{\varphi, \e}^{t,x}(r))\,dW_r=:\sum_{k=1}^3I_{k,\e}(s).}
\end{array}	
 \end{equation}

For $I_{1,\e}(s)$, due to the Lipschitz continuity of $b$, we have
\begin{equation}
\label{sbm21}
\Vert I_{1,\e}(s)\Vert^2_H\leq c_t\int_0^s \Vert \rho_\e(s)\Vert^2_H\,ds.	
\end{equation}

Concerning $I_{2,\e}(s)$, it can be written as
\[\begin{array}{l}
\ds{\int_0^se^{(s-r)A}\left[\sigma(X_{\varphi_\e, \e}^{t,x}(r),u_\e(t-r,X_{\varphi_\e, \e}^{t,x}(r)))-\sigma(X_{\varphi}^{t,x}(r),g(Z^{X_{\varphi_\e, \e}^{t,x}(r)}(t-r)))\right]\,\varphi_\e(r)\,dr}	\\[14pt]
\ds{+\int_0^se^{(s-r)A}\left[\sigma(X_{\varphi}^{t,x}(r),g(Z^{X_{\varphi_\e, \e}^{t,x}(r)}(t-r)))-\sigma(X_{\varphi}^{t,x}(r),g(Z^{X^{t,x}_\varphi(r)}(t-r)))\right]\,\varphi_\e(r)\,dr}\\[14pt]
\ds{+\int_0^se^{(s-r)A}\sigma(X_{\varphi}^{t,x}(r),g(Z^{X^{t,x}_\varphi(r)}(t-r)))\,(\varphi_\e(r)-\varphi(r))\,dr=:\sum_{k=1}^3J_{k,\e}(s).}
\end{array}\]
According to \eqref{sbm40}, we have
\[\Vert J_{1,\e}(s)\Vert_H\leq c\int_0^s\left(\Vert \rho_\e(r)\Vert_H+c_t\,\sqrt{\e} \left(1+\Vert X_{\varphi_\e, \e}^{t,x}(r)\Vert_H\right)\right)\Vert\varphi_\e(r)\Vert_H\,dr,\]
so that
\begin{equation}
\label{sbm41}\Vert J_{1,\e}(s)\Vert^2_H\leq c_{t,M}\int_0^s \Vert \rho_\e(r)\Vert^2_H\,dr+\e\,c_{t,M}\le(1+\sup_{r \in\,[0,t]}\Vert X_{\varphi_\e, \e}^{t,x}\Vert^2_H\right).\end{equation}
Moreover, thanks to \eqref{sbm42}, we have
\begin{equation}
\label{sbm43}
\Vert J_{2,\e}(s)\Vert^2_H\leq c_{t,M}\int_0^s \Vert Z^{X_{\varphi_\e, \e}^{t,x}(r)}(t-r)-Z^{X^{t,x}_\varphi(r)}(t-r)\Vert^2_H\,dr\leq c_{t,M}\int_0^s \Vert\rho_\e(r)\Vert^2_H\,dr.\end{equation}
Finally, for $I_{3,\e}(s)$, thanks to \eqref{sbm11} and \eqref{sbm26}  we have
\begin{equation}
\label{sbm22}
\mathbb{E}\sup_{s \in\,[0,t]}\Vert I_{3,\e}(s)\Vert^2_H\leq c_t\e\left(1+\Vert x\Vert^2_H\right).	
\end{equation}

Therefore, if we plug \eqref{sbm21}, \eqref{sbm41}, \eqref{sbm43} and \eqref{sbm22} into \eqref{sbm20}, in view of \eqref{sbm11} we obtain
\[\mathbb{E}\,\Vert \rho_\e(s)\Vert_H^2\leq c_{t,M}\int_0^s \mathbb{E}\Vert \rho_\e(r)\Vert_H^2\,dr+c_{t,M}\,\e\left(1+\Vert x\Vert_H^2\right)+\mathbb{E}\Vert J_{3,\e}(s)\Vert^2_H,\]
and the Gronwall lemma gives
\begin{equation}
\label{sbm46}
\mathbb{E}\,\Vert \rho_\e(s)\Vert_H^2\leq c_{t,M}\,\e\left(1+\Vert x\Vert_H^2\right)+c_{t,M}\int_0^s\mathbb{E}\Vert J_{3,\e}(r)\Vert^2_H\,dr.	\end{equation}
Now, due to the $\mathbb{P}$-a.s. convergence of $\varphi_\e$ to $\varphi$ in $L^2_w(0,t;H)$, since $\{\varphi\}_{\e \in\,(0,1)}\subset \Lambda_{t,M}$ and $\varphi \in\,\Lambda_{t,M}$ we can apply the dominated convergence theorem and we get
\[\lim_{\e\to 0}\int_0^s\mathbb{E}\Vert J_{3,\e}(r)\Vert^2_H\,dr.\]
Therefore, by taking the limit as $\e$ goes to zero in both sides of \eqref{sbm46} we obtain \eqref{sbm45}.

\end{document}